\newtheorem{lemma}{Lemma}[section]
\newtheorem{theorem}{Theorem}[section]
\newtheorem{remark}{Remark}[section]
\newtheorem{example}{Example}[section]
\newtheorem{case}{\bf Case}
\def\3bar{{|\!|\!|}}
\begin{document}

\title[Analysis of  any order Runge-Kutta Spectral Volume Schemes]
{Analysis of  any order Runge-Kutta Spectral Volume Schemes  for 1D Hyperbolic Equations}

\author[Wei P]{Ping Wei$^{1}$}
\address{
 $^1$School of Computer Science and Engineering, Sun Yat-sen University, Guangzhou 510006, China
}
\email{weip7@mail2.sysu.edu.cn}

\author[Zou QS$^*$]{Qingsong Zou$^{2*}$}
\address{
 $^{2*}$Corresponding author. School of Computer Science and Engineering, Sun Yat-sen University, Guangzhou 510006, China
}
\email{mcszqs@mail.sysu.edu.cn}




\thanks{The research was supported in part by the National Key R$\&$D Program of China (2022ZD0117805), by the National Natural Science Foundation of China under grant 12071496, and by the Guangdong Province grant  2023A1515012079.}

\subjclass[2000]{35R35, 49J40, 60G40}

\abstract{In this paper, we analyze any-order Runge-Kutta spectral volume schemes (RKSV(s,k)) for solving the one-dimensional scalar hyperbolic equation. 
 The RKSV(s,k) was constructed by using the $s$-th explicit Runge-Kutta method in time-discretization which has {\it strong-stability-preserving} (SSP) property, and by letting a piecewise $k-$th degree($k\geq 1 $ is an arbitrary integer) polynomial satisfy the local conservation law in each control volume designed by subdividing the underlying mesh with $k$ Gauss-Legendre points (LSV) or right-Radau points (RRSV).
For the RKSV(s,k), we would like to establish a general framework which use the matrix transferring process technique for analyzing the stability and the convergence property. 
The framework for stability is evolved based on the energy equation, while the framework for error estimate is evolved based on the error equation. And the evolution process is represented by matrices.
After the evolution is completed, three key indicative pieces of information are obtained: the termination factor $\zeta$, the indicator factor $\rho$, and the final evolved matrix. 
We prove that for the RKSV(s,k), the {\it  stability } holds and the $L_2$ norm error estimate is $\mathcal{O}(h^{k+1}+\tau^s)$, provided that the CFL condition is satisfied. Our theoretical findings have been justified by several numerical experiments.}

\keywords{spectral volume(SV) methods, L$_2$-norm stability, error estimates, any-order Runge-Kutta method.}

\maketitle

\section{Introduction}\label{sec1}	
In this paper, we propose an analysis framework to obtain the L$_2$-norm stability and the convergence properties of the explicit any order Runge-Kutta  spectral volume schemes(RKSV(s,k)),  when solving the hyperbolic equation
\begin{align}\label{linearEQ}
    \partial_tu+\partial_xu=0,\quad  (x,t)\in[a,b]\times(0,T].
\end{align}
The equation with initial  condition
\begin{align}
    u(x,0)&=u_0(x),\quad x\in[a,b], \label{linearEQ_boundary_x}
\end{align}
where  $u_{0}$ is a known function, and with the homogeneous boundary condition $u(a,t)=0, t\in [0,T]$ or with the periodic boundary  condition $u(a,t)=u(b,t), t\in [0,T]$. 


For temporal discretization, we choose the Runge-Kutta method.
As is well known, there are many ways to select coefficients for constructing the discrete-time derivatives of an $s$-th order Runge-Kutta method. Different choices of Runge-Kutta methods for different parameters exhibit different performances, with some able to maintain {\it total-variation-diminishing}(TVD) properties\cite{cockburn1998introduction}, some able to maintain {\it strong-stability-preserving} (SSP) properties \cite{gottlieb2001strong} and so on. These properties are highly applicable to the numerical construction of conservation equations. 
In this paper, we consider the type of time-marching has been later termed SSP, which is widely applied in the analysis of nonlinear stability including the TVD property  and the positivity-preserving property \cite{zhang2010positivity} for nonlinear conservation laws.
This class of methods has a unified formula for deriving and calculating the coefficients, enabling us to establish a unified framework for analysis of any order.

For spatial discretization, we employ the spectral volume(SV) method, which was first proposed in 2002 by  Wang \cite{zhang_zhimin_SV_2002_1} for solving hyperbolic equations. 
The basic idea of the SV method is to ensure that a (discontinuous) piecewise high-order polynomial satisfies sub-element-level conservation laws.  
 Due to its numerous advantages, such as local conservation properties, geometric and mesh flexibility, high-order accuracy in smooth regions, and high resolution in discontinuous areas \cite{sun_wang_2004_comparison}, the SV method has been successfully applied to various partial differential equations (PDEs).
 Applications include Burgers' equation \cite{kannan2012high}, the shallow water equation \cite{Cozzolino_sv_2012, choi2004spectral}, the Euler equation \cite{wang_SV_2004, wang_SV_2004_four, Abeele_sv_2009_2, Abeele_SD_2007, liu2017high}, electromagnetic field equations \cite{liu2006spectral}, and the Navier-Stokes equation \cite{Sun_sv_2012, haga2009rans, sun2006spectral}.

In \cite{zhang2005analysis}, Wang and Shu classify the spectral volume (SV) method as a special Petrov-Galerkin method. While widely applied, the mathematical theory of the SV method, particularly its stability and convergence properties, has been less developed. Early theoretical studies focused on the stability of low-order schemes over uniform grids. Abeele et al. discovered that subdividing points significantly influence stability and noted that third and fourth-order SV schemes based on Gauss-Lobatto points are weakly unstable \cite{Abeele_sv_1D_2007, Abeele_sv_2D_2007}. Zhang and Shu proved the stability of 1st, 2nd, and 3rd order SV schemes on 1D uniform meshes using Fourier analysis \cite{zhang2005analysis}. Recently, Cao and Zou \cite{zou_cao_2021} developed a uniform framework for analyzing the stability, convergence, and superconvergence of SV schemes over nonuniform meshes for 1D scalar hyperbolic equations, extending their analysis to 1D and 2D hyperbolic equations with degenerate variable coefficients \cite{xu_cao_zou_2022, Cao-Zhang-Zou_2023}. Their main approach involves constructing a novel trial-to-test space mapping, allowing the SV method to be reformulated as a special Galerkin method.

In \cite{wei_zou_2023}, Wei et al. analyzed the stability and convergence properties of two fully discrete schemes: the forward Euler spectral volume scheme and the second-order Runge-Kutta spectral volume scheme. They utilized a unified formula to derive and calculate the coefficients, demonstrating that under various Courant-Friedrichs-Lewy (CFL) conditions, both schemes exhibit optimal convergence rates. The analysis involves checking the eigenvalues of the amplification matrix and ensuring they lie within the unit circle for stability. This paper builds on these findings by examining the stability and convergence properties of any-order Runge-Kutta spectral volume (RKSV) methods for hyperbolic equations, further extending the theoretical framework and providing a comprehensive analysis of stability criteria and convergence orders for these fully discrete schemes.

 For the fully discrete RKSV for linear coefficient hyperbolic equations, we aim to establish a general framework for analyzing stability and optimal order convergence properties.
It is worth noting that the SV method is essentially a Petrov-Galerkin method rather than a pure Galerkin one \cite{zhang2005analysis}.
In recent years, there has been extensive literature on the properties of fully discrete Runge-Kutta discontinuous Galerkin methods (RKDG) schemes, analyzing the stability and optimal order convergence properties for the explicit third-order and fourth-order RKDG methods \cite{zhang_shu_2010_error_3order,zhang2011third,zhang2014error_rk3,xu2020error_rk4}.
In \cite{xu_zhang_2019}, Xu et al. propose a unified framework to investigate the $L_2$-norm stability of the explicit RKDG using the matrix transferring process technique.
The main approach in the stability analysis is to establish a robust energy equation that clearly reflects the evolution of the 
$L_2$-norm of the numerical solution and explicitly shows the stability mechanism hidden in the fully discrete scheme.
 Inspired by this, we follow the original ideas in \cite{xu_zhang_2019,zhang2004error,zhang_shu_2010_error_3order} and make the following important developments in this paper.

In this paper, we consider a type of time-marching method later termed \textit{strong-stability-preserving} (SSP) \cite{gottlieb2001strong}, which finds wide application in the analysis of nonlinear stability, including properties such as total-variation-diminishing in the mean (TVDM) \cite{cockburn1998introduction} and positivity preservation for nonlinear conservation laws \cite{zhang2010positivity}.
We employ two spatial discretization techniques: one using Gauss-Legendre points (LSV) and the other using right-Radau points (RRSV).
 Next we analysis the stability and  optimal order convergence property of RKSV(s,k) for  hyperbolic problems. Here, $s$ represents the number of temporal stages, and $k$ represents the degree of the polynomial in the spatial domain.
 
 First on the stability, we point out that in the analysis of the RKSV, based on a from-trial-to-test-space mapping, a special norm $\3bar\cdot\3bar$ , which is equivalent to the $L_2$ norm, has been introduced \cite{zou_cao_2021}. This special norm can be rewritten SV methods as a Petrov-Galerkin methods. 
 Building on this foundation, we proceed with the stability analysis, which we unify as a matrix transferring process. This process enables us to transform a standard energy equation into a specific form.
It is crucial to note that this transformation hinges on the inherent nature of the equation, involving the exchange of temporally discrete information with spatially discrete information.
This process depends on the desired form of the corresponding energy equation.
 Following the transferring process, we derive the expected stability outcomes through examination of a termination index $\zeta$ and a contribution index $\rho$ . These indices explicitly elucidate the stability mechanisms of the RKSV method, making them invaluable for analyzing the various stabilities of fully discrete RKSV methods.

 Secondly, regarding the optimal order convergence of fully discrete SV schemes,  the key is to establish an error equation to present the relationship between  two consecutive temporal steps' errors. To achieve this, we first define a novel inner product $(\cdot,\cdot^*)$  and a novel bilinear form 
$a_h(\cdot,\cdot^*)$, based on the aforementioned from-trial-to-test-space mapping. With these definitions, the fully discrete SV scheme can be formulated in a Petrov-Galerkin scheme.
Using stability-like analysis, our goal is to estimate that the error $e=u-u_h$ can be decomposed into two components: $e=\xi-\eta$, where $\eta = \mathcal{P}_h u-u,\quad \xi= \mathcal{P}_h u-u_h,$ and $\mathcal{P}_h $ denotes the Lagrange interpolating operator. Given that the bound of the interpolating error $\eta$ is known, our focus shifts to establishing a relationship for  $\xi$ between two temporal steps, specifically between  $\xi^{n+1}$ and $\xi^{n}$.
 Once the error equation is established, the estimation of the  {\it energy norm} error $\3bar\xi^{n+1}\3bar^2-\3bar\xi^n\3bar^2$ reduces to bounding three types of errors: the interpolating error, the truncation error of the Taylor expansion, and the temporal difference  $\3bar\xi^{n+1}-\xi^n\3bar$. 
 
 For {\it energy norm} error, the error between two time steps can be attributed to the evolution of errors between intermediate steps. To handle this portion of error, we make full use of the properties of Runge-Kutta methods under the Strong Stability Preserving (SSP) framework to transfer energy. Consequently, the estimation of the optimal convergence order can be transformed into a matrix transferring process. Unlike stability analysis, this evolution process includes a truncation error, which necessitates the appropriate treatment of this error in each step of the matrix transferring process. Finally, we analyze the optimal convergence order of the fully discretized RKSV method under certain CFL conditions.

The paper is organized as follows. In Section 2, we introduce any-order Runge-Kutta spectral volume schemes. Section 3 discusses the properties of inner product and bilinear form. Sections 4 and 5 analyze the stability and optimal convergence orders of RKSV(s,k), respectively In Section 6, numerical examples were given to justify our theoretical findings. Finally, Section 7 presents concluding remarks.


\section{Any order Runge-Kutta spectral volume schemes}
In this section, we present any order Runge-Kutta spectral volume schemes (RKSV(s,k)) for the one-dimensional hyperbolic problem \eqref{linearEQ}-\eqref{linearEQ_boundary_x}. 

\subsection{SV method of $k$-th order in space}
We begin with a description of the spatial discretization.
First, we  partition the computational domain $I=[a,b]$
with $N+1$ points $a=x_{\frac12}<x_{\frac32}\ldots<x_{N-\frac12}<x_{N+\frac12}=b$. Each element $I_{i}=[x_{i-\frac{1}{2}},x_{i+\frac{1}{2}}], i=1,\ldots,N$
is called  as a  {\it spectral volume} and we  denote its length as $h_i=x_{i+\frac{1}{2}}-x_{i-\frac{1}{2}}$.  The partition is assumed to be {\it regular} if there exists a positive constant $\alpha$ such that $\alpha h_{max}\leq  h_{min}$, where  $h_{max}=\max (h_i,1\le i\le N)$, and $h_{min}=\min (h_i, 1\le i\le N)$. Moreover, we subdivide each spectral
volume $I_i, i\in\{1,\ldots,N\}$ with $k+2$ points $x_{i-\frac12}=x_{i,0}<x_{i,1}<\cdots<x_{i,k}<x_{i,k+1}=x_{i+\frac12}$ into
$k+1$ sub-elements $C_{i,j}=[x_{i,j},x_{i,j+1}],  j=0,\ldots, k$ which are called as {\it control volumes}.

The {\it semi-discrete} SV scheme for (\ref{linearEQ})
is a scheme to find $u_h\in \mathcal{U}_h$ such that
\begin{equation}\label{semi-scheme}
    \int_{x_{i,j}}^{x_{i,j+1}}\partial_t u_h(x,t) \mathrm{d}x+u_{i,j+1}^--u_{i,j}^-=0, \quad
    1 \leq i \leq N, 0 \leq j \leq k,
\end{equation}
where
$$u_{i,j}^-=u_h^-(x_{i,j},t),1 \leq i \leq N, 0 \leq j \leq k,$$
and the (discontinuous) finite element space $$\mathcal{U}_{h}=\left\{w\in L_2(I):\left.w\right|_{I_{i}} \in \mathbf{P}^{k}\left(I_i\right), 1 \leq i \leq N\right\},$$
with $\mathbf{P}^{k}$  the space of polynomials  of degree at most $k$.

Note that different choice of subdividing points $x_{i,j}: j=1,\ldots, k$ leads to different SV scheme. Usually, the subdividing points in each element can be derived from those in the reference interval $[-1,1]$ by an affine transformation.  Suppose $-1=y_0<y_1<y_2<\cdots<y_{k+1}=1$, we let
\[
x_{i,j}=\frac{1}{2}h_iy_j+x_i, i\in \{1,\ldots N\}, j\in \{0,\ldots,k+1\},
\]
where  $x_i=\frac{1}{2}(x_{i+\frac{1}{2}}+x_{i-\frac{1}{2}}).$

In this paper, we will consider two choices of $y_i, i=1,\ldots,k$: one use the Gauss-Legendre points, the other use the so-called {\it right-Radau} points. The SV schemes corresponding to these two choices are called as LSV
and RRSV, respectively.

\subsection{Runge-Kutta method of $s$-order in time }
Next we explain the temporal discretization for \eqref{linearEQ}. For a given positive integer $M$, we let the temporal size $\tau =T/M$ and let $t_n=n\tau,  n=0,\ldots,M$. 
To temporally discretize \eqref{semi-scheme},  we  use the following any order Runge-Kutta method.

Using any order Runge-Kutta method for solving the ODE system, as constructed in \cite{gottlieb2001strong}.
\begin{equation}\label{ODE}
    \phi_t=F(\phi).
\end{equation}
By virtue of the Shu-Osher representation \cite{gottlieb2001strong}, the general construction of the RK(s) method is given as follows. For $\ell=0,1,\ldots,s-1$ , the stage solution, advancing from $t^n$ to $t^{n+1}$, are successively sought by the variational formula
\begin{align}\label{any_order_RK}
    \phi^{n,\ell+1}=\sum_{0\le\kappa\le \ell}\left[c_{\ell \kappa}\phi^{n,\kappa}+d_{\ell \kappa}F(\phi^{n,\kappa})\right],
\end{align}
where $d_{\ell \ell}\ne 0$, $\phi^{n,0}=\phi^n$ and  $\phi^{n,s}=\phi^{n+1}$ . According to \eqref{any_order_RK}, it is evident that different explicit Runge-Kutta methods are uniquely determined by the coefficients $c_{\ell \kappa}, d_{\ell \kappa}, \kappa \in \{0, \cdots, \ell\}, \ell \in \{0, 1, \cdots, s\}$.
Moreover, for linear constant coefficient problems, all Runge-Kutta methods of the same order and the same number of intermediate steps are equivalent \cite{gottlieb2001strong}.

Under the SSP framework the coefficients in \eqref{any_order_RK} of this method can be written in two matrices,
\begin{align}\label{matrix_ssp_runge_kutta}
\left\{c_{\ell \kappa}\right\}=\left[\begin{array}{ccc;{2pt/3pt}c}
    1   &  & & \\
    & \ddots & & \\
    & & 1 & \\
    \hdashline[2pt/3pt]
    g_{s-1,0} & \cdots & g_{s-1, s-2} & g_{s-1, s-1}\\
\end{array}\right], 
\quad\left\{d_{\ell \kappa}\right\}=\left[\begin{array}{ccc;{2pt/3pt}c}
    1 & & &\\
    & \ddots & &  \\
    & &1& \\
    \hdashline[2pt/3pt]
    & & &g_{s-1, s-1}\\
\end{array}\right],
\end{align}
The matrices $\{c_{\ell \kappa}\}$ and $\{d_{\ell \kappa}\}$ are square matrices of size $(s-1)\times (s-1)$.

The parameters are defined as follows. Let $g_{00}=1$ and recursively define for $s\le 2$ that
$$ g_{s-1,\ell}=\frac{1}{\ell}g_{s-2,\ell-1},\quad \ell=1,2,\ldots,s-2,$$
with $g_{s-1,s-1}=\frac{1}{s!}$ and $g_{s-1,0}=1-\sum_{\ell=1}^{s-1}g_{s-1,\ell}$.

To apply the above Runge-Kutta method  on  \eqref{semi-scheme}, we
introduce the notation
$$\phi_{i,j}(x,t)=\int_{x_{i,j}}^{x_{i,j+1}}u(x',t)\mathrm{d}x',\quad F(\phi)_{i,j}(x,t)=u^-(x_{i,j},t)-u^-(x_{i,j+1},t),$$
for  $x\in C_{i, j}$ and $\phi=\{\phi_{i,j},i=1,\ldots,N, j=0,\ldots,k\}, F(\phi)=\{F(\phi)_{i,j},i=1,\ldots,N, j=0,\ldots,k\},$
and  rewrite the semi-discrete system \eqref{semi-scheme}  in the form of
ODE system \eqref{ODE}. An application of the scheme \eqref{any_order_RK} yields the following  RKSV(s, k) scheme for \eqref{linearEQ} : Find $v_h,u_h\in \mathcal{U}_h$, such that
\begin{align}\label{fully-discrete_1}
\left[\begin{array}{c}
\int\nolimits_{x_{i,j}}^{x_{i,j+1}}u_h^{n,1}\mathrm{d}x\\
\vdots\\
\int\nolimits_{x_{i,j}}^{x_{i,j+1}}u_h^{n,s-1}\mathrm{d}x\\
\int\nolimits_{x_{i,j}}^{x_{i,j+1}}u_h^{n,s}\mathrm{d}x\\
\end{array}\right] 
&=
\{c_{\ell \kappa}\}
\left[\begin{array}{c}
\int\nolimits_{x_{i,j}}^{x_{i,j+1}}u_h^{n,0}\mathrm{d}x\\
\vdots\\
\int\nolimits_{x_{i,j}}^{x_{i,j+1}}u_h^{n,s-2}\mathrm{d}x\\
\int\nolimits_{x_{i,j}}^{x_{i,j+1}}u_h^{n,s-1}\mathrm{d}x\\
\end{array}\right]-\tau\{d_{\ell \kappa}\}
\left[\begin{array}{c}
u_{i,j+1}^{(n,0)-}-u_{i,j}^{(n,0)-}\\
\vdots\\
u_{i,j+1}^{(n,s-2)-}-u_{i,j}^{(n,s-2)-}\\
u_{i,j+1}^{(n,s-1)-}-u_{i,j}^{(n,s-1)-}\\
\end{array}\right],
\end{align}
hold  for all $i\in \{1,\ldots,N\}, j \in\{0,\ldots, k\}$, and $\{c_{\ell k}\}$, $\{d_{\ell k}\}$ are the matrices define in \eqref{matrix_ssp_runge_kutta}.

Thus,  RKSV(s,k) is provided by \eqref{fully-discrete_1}. Next, we will proceed to analyze this fully discrete numerical scheme.


\section{Properties of inner product and bilinear form} To facilitate the subsequent analysis of the RKSV(s,k) schemes, we will analyze the properties of the inner product and bilinear form.
For this purpose, we first rewrite \eqref{fully-discrete_1} into their equivalent form which are easy to be analyzed.
We begin by  rewriting \eqref{fully-discrete_1} as a
Petrov-Galerkin method.  Introducing the {\it test} space
$$\mathcal{V}_{h}=\left\{w^{*}:\left.w^{*}\right|_{C_{i, j}} \in \mathcal{P}_{0}\left(C_{i, j}\right), 1 \leq i \leq N, 0 \leq j \leq k\right\},$$
each  function $w^*\in \mathcal{V}_{h}$ can be represented as
\begin{align}\label{w_define_first}
    w^{*}(x,t)=\sum_{i=1}^{N} \sum_{j=0}^{k} w_{i, j}^{*}(t) \chi_{\mathbf{C}_{i, j}}(x),
\end{align}
where $w_{i, j}^{*}=w_{i, j}^{*}(t)$, $1\leq i\leq N, 0\leq j\leq k$ are functions only related to the time variable $t$. $\chi_{\mathbf{C}_{i, j}} $is the characteristic function defined as $\chi_{\mathbf{C}_{i, j}}=1$ in $C_{i, j}$ and $\chi_{\mathbf{C}_{i, j}}=0$ otherwise. With these notations, the scheme \eqref{fully-discrete_1} are equivalent to find $u_h\in \mathcal{U}_h$, such that
\begin{align}\label{any_order_PG_1}
\left[\begin{array}{c}
(u_h^{n,1},w^*)\\
\vdots\\
(u_h^{n,s-1},w^*)\\
(u_h^{n,s},w^*)\\
\end{array}\right] 
&=
\{c_{\ell \kappa}\}
\left[\begin{array}{c}
(u_h^{n,0},w^*)\\
\vdots\\
(u_h^{n,s-2},w^*)\\
(u_h^{n,s-1},w^*)\\
\end{array}\right]+\tau\{d_{\ell \kappa}\}
\left[\begin{array}{c}
a_{h}(u_h^{n,0},w^*)\\
\vdots\\
a_{h}(u_h^{n,s-2},w^*)\\
a_{h}(u_h^{n,s-1},w^*)\\
\end{array}\right].
\end{align}
hold for all $w^*\in {\mathcal {V}}_h$. Here the SV bilinear form  $a_{h}(\cdot,\cdot)$ is defined as :
\begin{align}\label{bilinear_defined}
    a_{h}\left(v,w^*\right)= -\sum_{i=1}^N\sum_{j=0}^k w_{i,j}^*(v_{i,j+1}^--v_{i,j}^-),
\end{align}
where $v\in\mathcal{U}_{h},$ and $w^*\in\mathcal{V}_{h}$.

Next, we transform the previously described Petrov-Galerkin scheme into a Galerkin scheme by introducing the {\it from-trial-to-test} mapping $T$ (see \cite{zou_cao_2021}).
For all $w\in \mathcal{U}_{h}$, let $w^*={\rm T} w=\sum_{i=1}^{N} \sum_{j=0}^{k} w_{i, j}^{*}(t) \chi_{\mathbf{C}_{i, j}}(x)\in \mathcal{V}_{h}$, where $w^*_{i,j}$ are defined by
\begin{equation}\label{constant_insert}
w_{i, 0}^{*}=w_{i-\frac{1}{2}}^{+}+A_{i, 0} w_{x}\left(x_{i,0}\right), \quad w_{i, j}^{*}-w_{i, j-1}^{*}=A_{i, j} w_{x}\left(x_{i, j}\right), j \in Z_{k}
\end{equation}
Here, $A_{i,j} = \frac{h_j}{2} A_j$ for $1 \leq i \leq N$ and $0 \leq j \leq k$, where $A_j$ ($0 \leq j \leq k$) are the weights in the interpolating-type quadrature $Q(f) = \sum_{j=0}^{k+1} A_j f(y_j)$ used to calculate the integral $I(f) = \int_{-1}^1 f(y) , dy$.

 Note that since each $w \in \mathcal{U}_h$ is a polynomial of degree $k$ in each element $I_i$, and the interpolating-type quadrature $Q(f) = I(f)$ when $f$ is a polynomial of degree $k$, it follows from (\ref{constant_insert}) that
$$w_{i,k}^*=w_{i+\frac{1}{2}}^--A_{i,k+1}w_x(x_{i+\frac{1}{2}}^-).$$

With the above mapping $T$, the scheme \eqref{any_order_PG_1} are equivalent to finding $u_h\in \mathcal{U}_h$, such that
\begin{align}\label{RK_G_1}
\left[\begin{array}{c}
(u_h^{n,1},Tw)\\
\vdots\\
(u_h^{n,s-1},Tw)\\
(u_h^{n,s},Tw)\\
\end{array}\right] 
&=
\{c_{\ell \kappa}\}
\left[\begin{array}{c}
(u_h^{n,0},Tw)\\
\vdots\\
(u_h^{n,s-2},Tw)\\
(u_h^{n,s-1},Tw)\\
\end{array}\right]+\tau\{d_{\ell \kappa}\}
\left[\begin{array}{c}
a_{h}(u_h^{n,0},Tw)\\
\vdots\\
a_{h}(u_h^{n,s-2},Tw)\\
a_{h}(u_h^{n,s-1},Tw)\\
\end{array}\right].
\end{align}
hold for all $w\in {\mathcal {U}}_h$. Using the notation $w^*=Tw$, the scheme \eqref{RK_G_1} has the same representation as \eqref{any_order_PG_1}. Therefore,  we will also use the formula \eqref{any_order_PG_1}  to indicate the Galerkin scheme
\eqref{RK_G_1} throughout the rest of the paper.

To analyze \eqref{any_order_PG_1}, we will first discuss the bilinear form $a_h(\cdot,\cdot^*)$ and define the element-wise bilinear form for all $i\in \{1,\ldots,N\}$
\begin{align}\label{ebilinear_1}
a_{h,i}(v,w^*)= -\sum_{j=0}^k w_{i,j}^*(v_{i,j+1}^--v_{i,j}^-),\quad \forall v, w\in\mathcal{U}_{h}.
\end{align}

Substituting the definition of \eqref{constant_insert} into \eqref{ebilinear_1}, we obtain that (see also \cite{zou_cao_2021}):
\begin{align}\label{difference}
a_{h,i}(v,w^*)=(v,w_x)_i-v^{-}w^-|_{x_{i+\frac{1}{2}}}
+v^{-}w^+|_{x_{i-\frac{1}{2}}}-R_i(vw_x)-A_{i,0}w_x^+\left[v\right]|_{x_{i-\frac{1}{2}}}, \forall v, w\in\mathcal{U}_{h},
\end{align}
where
\begin{align}\label{Ri_define}
R_i(f) =\int_{I_i} f\mathrm{d}x-\sum_{j=0}^{k+1} A_{i,j}f(x_{i,j})
\end{align}
is the residual of the quadrature on the interval $I_i$.
Remark that for both the LSV and RRSV, we have $A_{i,0}=0$ and
$R_i(vw_x)=0$ (since on $I_i$, $vw_x$ is a polynomial of degree no more than $2k-1$), then the formula \eqref{difference} reduces to
\begin{align}\label{difference0}
a_{h,i}(v,w^*)=(v,w_x)_i-v^{-}w^-|_{x_{i+\frac{1}{2}}}
+v^{-}w^+|_{x_{i-\frac{1}{2}}}, \forall v,w\in {\mathcal {U}}_h.
\end{align}
Moreover, setting the inner product $(\cdot,\cdot^*)_i=(\cdot,\cdot^*)_{I_i}$, we have that
\begin{align}\label{inner_compact}
(v,w^*)_i=( v,w)_i+R_i(w_x\partial_x^{-1}v), \forall v,w\in \mathcal {U}_h,
\end{align}
where
\[
\partial_x^{-1}v=\int_a^x v(x',\cdot) \mathrm{d}x'.
\]
Since $w_x\partial_x^{-1}v$ is a polynomial of degree $2k$ in $I_i$,  $R_i(w_x\partial_x^{-1}v)=0$ for the RRSV scheme and $R_i(w_x\partial_x^{-1}v)$ is a constant in the $I_i$ for the LSV scheme.

\begin{remark}\label{remark_bilinear_1}
According to previous study \cite{wei_zou_2023}, both the LSV and RRSV schemes satisfy the following relationship: 
\begin{align}\label{excharge_express}
    a_h(v,w^*)+a_h(w,v^*)=-\sum_{i=1}^{N}[v]|_{x_{i+\frac{1}{2}}}[w]|_{x_{i+\frac{1}{2}}}, \forall v,w \in {\mathcal U}_h,
\end{align}
where the jump $ [w]|_{x_{i+\frac{1}{2}}}=w^+_{i+\frac{1}{2}}-w^-_{i+\frac{1}{2}}$. 

Taking $w=v$,  if $v\in {\mathcal U}_h$ satisfies $v(a)=v(b)$ or $v(a)=0$, then we have: 
\begin{align}\label{a_DG_estimate0}
    a_h(v,v^*)\le  0 .
\end{align}

Moreover, the following estimate holds:
\begin{align}\label{a_DG_estimate}
    |a_h(v,w^*)|\lesssim  h^{-1}\|v\|\|w\|,\quad \forall v,w \in {\mathcal U}_h,
\end{align}
where $a_h(v,w^*)=\sum_{i=1}^{N}a_{h,i}(v,w^*)$ .
\end{remark}

\begin{remark}\label{remark_positive_matrix}
According to the study \cite{xu_zhang_2019}, 
if $G=\{g_{pq}\}$ be a symmetric positive semidefinite matrix for the row numbers and column numbers are both taken from a given set $\mathcal{G}$. Then the following inequality holds:
\begin{align}\label{positive_matrix_1}
    \sum_{p\in \mathcal{G}}\sum_{q\in \mathcal{G}}g_{pq}a_h(v_p,v_q^*)\le 0
\end{align} 
\end{remark}

\begin{remark}\label{remark_inner_productive}
According to previous study \cite{zou_cao_2021},  the new norm can be defined based on the inner product (Eq. \ref{inner_compact}) as follows:
\begin{align}\label{norm_new}
    \3barw\3bar^2=(w,w^*)=(w,w)+R(w_x\partial_x^{-1}w).
\end{align}
Here, $\3bar \cdot \3bar$ represents the new norm of $w$.

Furthermore, there exists an estimate relating the $L_2$ norm and the new norm:
\begin{align}\label{norm_l2_new}
    \|w^*\|\lesssim \|w\| \lesssim \3bar w\3bar,
\end{align}
This implies that the $L_2$ norm and the new norm are equivalent.

Moreover, we can prove the symmetry property of the inner product:
\begin{align}\label{inner_excharge_2}
    (w_1,w_2^*)=(w_2,w_1^*),\  \forall w_1,w_2\in {\mathcal U}_h.
\end{align}
This property ensures that the inner product is symmetric.
\end{remark}

\section{Stability}\label{stability_any_order_section}
In this section, we will analyze the stability of the  RKSV(s,k) scheme \eqref{any_order_PG_1}.

For the stage solutions $u_h^{n,\ell}=u_h(t^{n,\ell})$ with $\ell =0,1,\cdots,s$ after the time level $t^n$, following the works of \cite{zhang2004error,zhang_shu_2010_error_3order}, we define a series of the temporal differences in the form :
\begin{align}\label{time_new_1}
\mathbb{D_\kappa}u^n_h=\sum_{0\le l \le \kappa} \sigma_{\kappa l}u^{n,l}_h,\quad 1\le \kappa \le s,
\end{align}
where $\sum_{0\le l \le \kappa}\sigma_{\kappa l}=0 $ and the initial condition is given by $\mathbb{D}_0u^n_h=u^n_h$.   $\mathbb{D}\kappa$ acts on $u_h^n$, it can be seen as a linear representation of the stage solutions $u_h^{n,\ell}$.

It is worth noting that up to this point, $\sigma_{\kappa \ell}$ remains unknown. In order to further determine its value, we will leverage the relationship between inner products and bilinearity. Let the operator $\mathbb{D}_\kappa$ satisfy the following expression:
\begin{align}\label{time_new_2}
(\mathbb{D}_\kappa u^n_h, w^*)=\tau a_h(\mathbb{D}_{\kappa-1}u_h^n,w^*).
\end{align}
Based on formula \eqref{time_new_2}, we can recursively obtain the values of $\sigma_{\kappa \ell}$, which is the expression for $\mathbb{D}_\kappa$ from equation \eqref{time_new_1}.

Furthermore, in Eq\eqref{time_new_2}, the left inner product contains information regarding time discretization, while the right-hand side contains information regarding spatial discretization, expressed through the bilinear form $a_h(\cdot,\cdot^*)$.


\begin{lemma}\label{operator_k_k-1}
For $\kappa=1,2\cdots,s$, there holds
\begin{align}\label{time_new_3}
    \3bar \mathbb{D_\kappa}u^n_h\3bar \le C\lambda \3bar \mathbb{D}_{\kappa-1}u^n_h\3bar,\quad \forall n\ge 0,
\end{align} 
where $\lambda=\frac{\tau}{h}$ is the CFL condition number.
\end{lemma}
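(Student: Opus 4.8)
The plan is to establish the estimate \eqref{time_new_3} by combining the defining relation \eqref{time_new_2} with the equivalence of norms \eqref{norm_l2_new} and the boundedness of the bilinear form \eqref{a_DG_estimate}. First I would take $w = \mathbb{D}_\kappa u_h^n$ in \eqref{time_new_2}, so that the left-hand side becomes $(\mathbb{D}_\kappa u_h^n, (\mathbb{D}_\kappa u_h^n)^*) = \3bar \mathbb{D}_\kappa u_h^n\3bar^2$ by the definition of the new norm in \eqref{norm_new}. This gives the identity
\begin{align}\label{plan_key_identity}
\3bar \mathbb{D}_\kappa u_h^n\3bar^2 = \tau\, a_h(\mathbb{D}_{\kappa-1}u_h^n, (\mathbb{D}_\kappa u_h^n)^*).
\end{align}

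Next I would apply the bound \eqref{a_DG_estimate}, namely $|a_h(v,w^*)| \lesssim h^{-1}\|v\|\|w\|$, with $v = \mathbb{D}_{\kappa-1}u_h^n$ and $w = \mathbb{D}_\kappa u_h^n$, to obtain
\begin{align}\label{plan_bound}
\3bar \mathbb{D}_\kappa u_h^n\3bar^2 \lesssim \tau h^{-1}\,\|\mathbb{D}_{\kappa-1}u_h^n\|\,\|\mathbb{D}_\kappa u_h^n\|.
\end{align}
Then I would invoke the norm equivalence \eqref{norm_l2_new} in both directions: $\|\mathbb{D}_\kappa u_h^n\| \lesssim \3bar \mathbb{D}_\kappa u_h^n\3bar$ to replace the $L_2$ norm on the right by the new norm on the left, and $\|\mathbb{D}_{\kappa-1}u_h^n\| \lesssim \3bar \mathbb{D}_{\kappa-1}u_h^n\3bar$. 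Cancelling one factor of $\3bar \mathbb{D}_\kappa u_h^n\3bar$ from both sides (which is legitimate, or trivial if the quantity is zero) yields $\3bar \mathbb{D}_\kappa u_h^n\3bar \lesssim \tau h^{-1} \3bar \mathbb{D}_{\kappa-1}u_h^n\3bar = \lambda \3bar \mathbb{D}_{\kappa-1}u_h^n\3bar$, which is \eqref{time_new_3} with a generic constant $C$ depending only on the mesh regularity constant $\alpha$ and on $k$.

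One subtlety I would address carefully is that the bilinear form appearing on the right of \eqref{plan_key_identity} is $a_h(\,\cdot\,,(\mathbb{D}_\kappa u_h^n)^*)$, where the starred argument is the image under the from-trial-to-test mapping $T$; since \eqref{a_DG_estimate} is already stated in terms of $a_h(v,w^*)$ with $w^* = Tw$ and with the $L_2$ norm $\|w\|$ of the \emph{trial} function $w$, this matches directly, so no extra work is needed there. I would also need to confirm that the temporal differences $\mathbb{D}_\kappa u_h^n$ and $\mathbb{D}_{\kappa-1}u_h^n$ lie in $\mathcal{U}_h$ — this is immediate from \eqref{time_new_1}, since each is a linear combination of the stage solutions $u_h^{n,\ell} \in \mathcal{U}_h$ — so that \eqref{norm_new}, \eqref{norm_l2_new}, and \eqref{a_DG_estimate} all apply. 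The main (and only genuine) obstacle is bookkeeping of constants: I must make sure the hidden constants in $\lesssim$ do not secretly depend on $\kappa$ or on $n$, which they do not, since the norm-equivalence and boundedness constants are uniform over $\mathcal{U}_h$ and over the regular mesh family; hence a single $C$ works for all $\kappa \in \{1,\dots,s\}$ and all $n \ge 0$, completing the proof.
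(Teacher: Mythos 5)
Your proof is correct and follows essentially the same route as the paper's: take $w=\mathbb{D}_\kappa u_h^n$ in \eqref{time_new_2}, bound the right-hand side via \eqref{a_DG_estimate}, pass to the norm $\3bar\cdot\3bar$ using \eqref{norm_l2_new}, and cancel the common factor. The additional remarks on the starred test function, membership in $\mathcal{U}_h$, and uniformity of the constants are sound but not different in substance from the paper's argument.
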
	
\begin{proof}
Taking $w^*=\mathbb{D}_\kappa u_h^{n*}$ in the \eqref{time_new_2}, we have
\begin{align}\label{lemma_1}
    (\mathbb{D}_\kappa u^n_h, \mathbb{D}_\kappa u_h^{n*})=\tau a_h(\mathbb{D}_{\kappa-1}u_h^n,\mathbb{D}_\kappa u_h^{n*}).
\end{align}
Using the definition \eqref{norm_new} and the estimate of $a_h(\cdot,\cdot^*)$ in \eqref{a_DG_estimate}, we obtain
\begin{align*}
    \3bar\mathbb{D}_\kappa u^n_h\3bar^2\le C \frac{\tau}{h} \|\mathbb{D}_{\kappa-1}u_h^n\|\|\mathbb{D}_\kappa u_h^{n*}\|.
\end{align*}
Using the equivalence \eqref{norm_l2_new} between $L_2$ norm $\|\cdot\|$ and the norm $\3bar\cdot \3bar$, we have 
\begin{align}
    \3bar\mathbb{D}_\kappa u^n_h\3bar^2\le C \lambda \3bar\mathbb{D}_{\kappa-1}u_h^n\3bar\3bar\mathbb{D}_\kappa u_h^{n}\3bar.
\end{align}
If we remove the identical terms from the above equation, this proves the Lemma.
\end{proof}

Below we will use the generalized notations
\begin{align}
u^{n,\kappa+ms}=u^{n+m,\kappa},\quad \kappa=0,1,\ldots,s-1,
\end{align}
for any given integer $m\ge 1$. Here $n$ and $n+m$ are called the time levels, $\kappa$ and
$\kappa+ms$ are called the stage numbers, and the $m$ called the step number. When $m>1$, the Runge-Kutta method is a multistep method. In the subsequent analysis of Runge-Kutta methods, we will consider $m=1$, implying a single-step Runge-Kutta method.

In the above process to define the temporal differences, we also achieve the evolution identity
\begin{align}\label{time_trans_1}
\alpha_0u^{n+1}_h=\sum_{0\le p\le s}\alpha _p\mathbb{D}_p u^n_h,
\end{align} 
where $\alpha> 0$ is used only for scaling. For convenience, denote $\boldsymbol{\alpha}=(\alpha_0,\alpha_1,\ldots,\alpha_{s}) $.

For the $u^{n+1}_h,\mathbb{D}_\kappa u_h^{n}\in \mathcal{U}_{h}$ $\kappa\in\{0,1,\cdots,s-1\}$, according to \eqref{w_define_first}, we can obtain $(u^{n+1}_h)^*,(\mathbb{D}_\kappa u_h^{n})^*\in \mathcal{V}_{h}$  $\kappa\in\{0,1,\cdots,s-1\}$. Therefore, by taking the inner product of the left side of \eqref{time_trans_1} with $\alpha_0 (u^{n+1}_h)^* $ and the right side of \eqref{time_trans_1} with $\sum_{0\le q\le s}\alpha_q (\mathbb{D}_q u^n_h)^*$, we have:
\begin{align*}
\alpha_0^2(u^{n+1}_h,(u^{n+1}_h)^*)=(\sum_{0\le p\le s}\alpha _p\mathbb{D}_p u^n_h, \sum_{0\le q\le s}\alpha _q(\mathbb{D}_q u^{n}_h)^*).
\end{align*} 
Using the symmetry property of the inner product\eqref{inner_excharge_2}, we have the energy equation:
\begin{align}\label{stability_1}
\alpha^2_0(\3baru^{n+1}_h\3bar^2-\3baru^n_h\3bar^2)=\sum_{0\le p,q\le s}a_{pq}(\mathbb{D}_pu^n_h,(\mathbb{D}_qu^n_h)^*)=\text{RHS}(u_h^n),
\end{align}	
where $a_{00}=0$, and $a_{pq}=\alpha_p\alpha_q$ if $p+q>0$.

According to \eqref{stability_1}, the stability of RKSV(s,k) is determined by RHS$(u_h^n)$. Next, let's discuss the magnitude of RHS$(u_h^n)$.
Since the right-hand side term of equation \eqref{stability_1} depends solely on inner products of time stage solutions, we attempt to utilize the equivalence relation \eqref{time_new_2} to transform some of the inner products of time stage solutions into bilinear forms discretized in space. After applying the equivalence relation $l$ times, where $l\geq0$, we obtain RHS$^{(l)}(u_h^n)$ as:
\begin{align}\label{stability_l}
\text{RHS}^{(l)}(u_h^n)=\sum_{0\le p,q\le s}a_{pq}^{(l)}(\mathbb{D}_pu^n_h,(\mathbb{D}_qu^n_h)^*)+\sum_{0\le p,q\le s}b_{pq}^{(l)}\tau a_h(\mathbb{D}_pu^n_h,(\mathbb{D}_qu^n_h)^*).
\end{align}	
Here, we refer to $l$ as the number of transformations applied to RHS$(u_h^n)$. $a^{(l)}_{pq}$ and $b^{(l)}_{pq}$ denote the elements in the $p$-th row and $q$-th column of the matrices $\mathbb{A}^{(l)}$ and $\mathbb{B}^{(l)}$, respectively, where $p, q \in \{0,1,\cdots,s\}$.
Based on this, we will provide a detailed explanation of the evolution process of RHS$^{(l)}(u_h^n)$.

\subsection{The evolution iteration of RHS$(u_h^n)$}
According to the research by Xu and Zhang \cite{xu_zhang_2019} on any-order RKDG schemes, the evolution iteration of RHS$(u_h^n)$ in the RKSV(s,k) schemes follows a similar process.

Firstly, we will provide the initial matrix for RHS$(u_h^n)$, the process of which is described by equation \eqref{stability_1}, as follows:
$$\mathbb{A}^{(0)}=\{a_{pq}\},\quad \mathbb{B}^{(0)}=\mathbb{O}. $$
Here, $\mathbb{A}^{(0)}$ represents an initial matrix with elements $a_{pq}$, and $\mathbb{B}^{(0)}$ represents an initial matrix which is the zero matrix denoted as $\mathbb{O}$. Moreover, it is mentioned in the study that $\mathbb{A}^{(0)}$ and $\mathbb{B}^{(0)}$ are symmetric matrices.

The motivation behind the matrix transformation in equation \eqref{stability_l} can be attributed to two reasons. Firstly, the equivalence relation \eqref{time_new_2} transforms some of the inner products of time stage solutions into bilinear forms discretized in space, which means certain elements of matrix $\mathbb{A}$ undergo specific transformations to reach matrix $\mathbb{B}$. Secondly, it is driven by fully exploiting the approximate skew-symmetric property of bilinear forms discretized in space, as described in equation \eqref{excharge_express}.


Below, we summarize the evolution of matrices, assuming that starting from the $l$-th iteration, where $l\geq 1$, the matrices $\mathbb{A}^{(l)}$ and $\mathbb{B}^{(l)}$ in the expression for RHS$^{(l)}(u_h^n)$ are:
\begin{align*}
&\begin{aligned}
    &\mathbb{A}^{(l)}= \left\{a_{pq}^{(l)}\right\}=\left[\begin{array}{c:  c:  ccc}
        \mathbb{O} & \mathbb{O} & \mathbb{O} & \cdots & \mathbb{O} \\
        \hdashline \mathbb{O} & a_{l, l}^{(l)} & a_{l, l+1}^{(l)} & \cdots & a_{l, s}^{(l)} \\
        \hdashline \mathbb{O} & a_{l+1, l}^{(l)} & a_{l+1, l+1}^{(l)} & \cdots & a_{l+1,s}^{(l)} \\
        \vdots & \vdots & \vdots & \ddots & \vdots \\
        \mathbb{O} & a_{s, l}^{(l)} & a_{s, l+1}^{(l)} & \cdots & a_{s, s}^{(l)}
    \end{array}\right],
    \end{aligned}\\
    &\begin{aligned}
    &\mathbb{B}^{(l)}= \left\{b_{pq}^{(l)}\right\}=\left[\begin{array}{c:  c:  ccc}
        \star & \star & \star & \cdots & \star \\
        \hdashline \star & b_{l, l}^{(l)} & b_{l, l+1}^{(l)} & \cdots & b_{l, s}^{(l)} \\
        \hdashline \star & b_{l+1, l}^{(l)} & 0 & \cdots & 0 \\
        \vdots & \vdots & \vdots & \ddots & \vdots \\
        \star & b_{s, l}^{(l)} & 0 & \cdots & 0
    \end{array}\right].
    &
\end{aligned}
\end{align*}

It is worth noting that although we represent the zero matrix as $\mathbb{O}$ in the matrix $\mathbb{A}^{(l)}$, different positions of the zero matrix $\mathbb{O}$ represent different meanings. The zero matrix on the diagonal is denoted as $\mathbb{O}_{(l+1) \times (l+1)}$, the zero matrix from the second to the $s$-th element in the first row is denoted as $\mathbb{O}_{(l+1) \times 1}$, and the zero matrix from the second to the $s$-th element in the first column is denoted as $\mathbb{O}_{1 \times (l+1)}$. The same principle applies to the symbol $\star$; we use $\star$ to represent non-zero elements in the matrix $\mathbb{B}^{(l)}$.

Next, we will evolve RHS$^{(l)}(u_h^n)$ one step further to RHS$^{(l+1)}(u_h^n)$ using the equivalence relation \eqref{time_new_2}. Specifically, we will transform the elements in the $(l+1)$-th row of the matrix $\mathbb{A}^{(l)}$ into the $(l+1)$-th row of the matrix $\mathbb{B}^{(l)}$. According to the symmetry of matrices, similar treatment will be applied to the $(l+1)$-th column of $\mathbb{A}^{(l)}$.

First, we check if $a_{l,l}^{(l)}$ equals zero. If it is zero, we can utilize the symmetry of the inner product $(\cdot, \cdot^*)$, leading to:
\begin{align*}
    a_{l+1,l}^{(l)}(\mathbb{D}_{l+1} u^n_h, (\mathbb{D}_{l} u^n_h)^*)+a_{l,l+1}^{(l)}(\mathbb{D}_{l} u^n_h, (\mathbb{D}_{l+1} u^n_h)^*)=2a_{l+1,l}^{(l)}(\mathbb{D}_{l+1} u^n_h, (\mathbb{D}_{l} u^n_h)^*).
\end{align*}
Then by making full use of relationship \eqref{time_new_2} among those temporal differences, we obtain
\begin{align}\label{diagonal_value}
    2a_{l+1,l}^{(l)}(\mathbb{D}_{l+1} u^n_h, (\mathbb{D}_{l} u^n_h)^*)=	2\tau a_{l+1,l}^{(l)} a_h(\mathbb{D}_{l} u^n_h,(\mathbb{D}_{l} u^n_h)^*),
\end{align}
and for $l+1\le p\le s-1$
\begin{align}
    &a_{p+1,l}^{(l)}( \mathbb{D}_{p+1} u^n_h, (\mathbb{D}_{l} u^n_h)^*)+a_{l,p+1}^{(l)}( \mathbb{D}_{l} u^n_h, (\mathbb{D}_{p+1} u^n_h)^*)
    +\delta_{p,l+1} a_{p,l+1}^{(l)}( \mathbb{D}_{p} u^n_h, (\mathbb{D}_{l+1} u^n_h)^*)\label{l_terms_involve}\\
    &= \delta_{p,l+1} \left[a_{p,l+1}^{(l)}-\frac{2a_{p+1,l}^{(l)}}{\delta_{p,l+1}}\right](\mathbb{D}_pu_h^n,(\mathbb{D}_{l+1}u_h^n)^*) +2a_{p+1,l}^{(l)}\tau a_h(\mathbb{D}_pu_h^n,(\mathbb{D}_{l}u_h^n)^*)\nonumber\\
&\quad\quad\quad +2a_{p+1,l}^{(l)}\tau a_h(\mathbb{D}_l u_h^n,(\mathbb{D}_{p}u_h^n)^*).\nonumber
\end{align}
Here, when $p=l+1$, $\delta_{p,l+1}$ equals 1, and in other cases, it equals 2. Thus, we have completed the processing of the $l$-th row, indicating the end of the $(l+1)$-th iteration.

Next, we will provide a systematic explanation of the evolution process of the matrix.
\begin{theorem}\label{matrix_theorem_main}
For $l \geq 1$, the iterative evolution of the matrices \(\mathbb{A}\) and \(\mathbb{B}\) is defined by the following steps:
\begin{itemize}
\item Firstly, initialize the matrices as $\mathbb{A}^{(0)}=\{a_{pq}\}$, $\mathbb{B}^{(0)}=\mathbb{O}$.
\item Secondly, iterate for $\ell \geq 1$ to update the matrices.
Update $\mathbb{A}^{(l)}=\{a_{pq}^{(l)}\}$ using the recursive formula:
 \begin{align}\label{A_transformation}
        a_{pq}^{(l)}=a_{qp}^{(l)}= \begin{cases}0, & 0 \leq q \leq l-1, \\
            a_{pq}^{(l-1)}-2 a_{p+1, q-1}^{(l-1)}, & p=l \text { and }  q=l, \\ 
            a_{p q}^{(l-1)}-a_{p+1, q-1}^{(l-1)}, & l+1 \leq p\leq  s-1 \text { and } q=l, \\ a_{pq}^{(l-1)}, & \text {otherwise }.\end{cases}
    \end{align}
Update $\mathbb{B}^{(l)}=\{b_{pq}^{(l)}\}$ using the recursive formula:
 \begin{align}\label{B_transformation}
        b_{pq}^{(l)}=b_{qp}^{(l)}= \begin{cases}2 a_{p+1, q}^{(l-1)}, & l-1 \leq p \leq  s-1 \text { and } q=l-1 \\ 
        b_{pq}^{(l-1)}, & \text {otherwise. }\end{cases}
    \end{align}

\item Third, update $l-1$ to $l$. Then, check if $a_{ll}^{(l)}$ equals 0. If it does, repeat the second step. If it does not equal 0, the matrix evolution terminates, and the termination index $\zeta$ is set to $l$.
\item Lastly, the algorithm outputs the final matrices $\mathbb{A}^{(\zeta)}$ and $\mathbb{B}^{(\zeta)}$, as well as the termination index $\zeta$. These values represent the resulting transformed matrices and the index at which the transformation process is completed.
\end{itemize}
\end{theorem}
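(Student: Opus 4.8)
The statement is essentially a precise bookkeeping of the rewriting already begun in \eqref{diagonal_value}--\eqref{l_terms_involve}, so the plan is to argue by induction on the iteration index $l$, establishing simultaneously that (a) RHS$^{(l)}(u_h^n)$ of \eqref{stability_l} is an \emph{identically equal} reformulation of the original right-hand side \eqref{stability_1}, hence still equals $\alpha_0^2(\3baru_h^{n+1}\3bar^2-\3baru_h^n\3bar^2)$; (b) $\mathbb{A}^{(l)}$ and $\mathbb{B}^{(l)}$ are symmetric and carry exactly the block/sparsity pattern displayed before the theorem — in particular $\mathbb{A}^{(l)}$ is supported on rows and columns $l,\dots,s$ and $\mathbb{B}^{(l)}$ has a vanishing deep sub-block; and (c) they obey the recursions \eqref{A_transformation}--\eqref{B_transformation}. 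The base case $l=0$ is immediate: RHS$^{(0)}(u_h^n)$ is by definition the right side of \eqref{stability_1}, so $\mathbb{A}^{(0)}=\{a_{pq}\}$ with $a_{00}=0$, $a_{pq}=\alpha_p\alpha_q$ otherwise (symmetric), and $\mathbb{B}^{(0)}=\mathbb{O}$. The only machinery needed is the \emph{defining relation} \eqref{time_new_2} of the temporal differences, which converts an inner product $(\mathbb{D}_\kappa u_h^n,w^*)$ into the spatial form $\tau a_h(\mathbb{D}_{\kappa-1}u_h^n,w^*)$, and the \emph{symmetry} \eqref{inner_excharge_2} of $(\cdot,\cdot^*)$, which legitimizes merging a term with its transpose; the (approximate) skew-symmetry \eqref{excharge_express} is the reason one insists on keeping $\mathbb{B}^{(l)}$ symmetric for the later stability bound, but it is not invoked in the rewriting itself.

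\textbf{Inductive step.} Assume the assertions at level $l$ and that the process has not terminated, i.e. $a_{ll}^{(l)}=0$. I would rewrite RHS$^{(l)}(u_h^n)$ by eliminating every inner-product term that involves $\mathbb{D}_l u_h^n$. The diagonal term drops by hypothesis. Each off-diagonal pair $a_{rl}^{(l)}\bigl[(\mathbb{D}_r u_h^n,(\mathbb{D}_l u_h^n)^*)+(\mathbb{D}_l u_h^n,(\mathbb{D}_r u_h^n)^*)\bigr]$, $r\ge l+1$, is first symmetrized by \eqref{inner_excharge_2} to $2a_{rl}^{(l)}(\mathbb{D}_r u_h^n,(\mathbb{D}_l u_h^n)^*)$ and then converted by \eqref{time_new_2} to $2a_{rl}^{(l)}\tau a_h(\mathbb{D}_{r-1}u_h^n,(\mathbb{D}_l u_h^n)^*)$ — this is exactly \eqref{diagonal_value} when $r=l+1$. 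To keep the new bilinear coefficients symmetric one must also manufacture the transposed halves $a_h(\mathbb{D}_l u_h^n,(\mathbb{D}_p u_h^n)^*)$; these are supplied by spending part of the column-$(l+1)$ inner products, since by \eqref{inner_excharge_2} and \eqref{time_new_2} one has $(\mathbb{D}_p u_h^n,(\mathbb{D}_{l+1}u_h^n)^*)=(\mathbb{D}_{l+1}u_h^n,(\mathbb{D}_p u_h^n)^*)=\tau a_h(\mathbb{D}_l u_h^n,(\mathbb{D}_p u_h^n)^*)$. Collecting the coefficients of the surviving inner-product terms and of the freshly created spatial forms and comparing with \eqref{stability_l}, one reads off precisely \eqref{A_transformation} (column $l$ is zeroed; the $(l+1)$-diagonal entry loses $2a_{l+2,l}^{(l)}$ because it is a lone term, whereas each off-diagonal entry of column $l+1$ loses only $a_{p+1,l}^{(l)}$ because it comes paired with its transpose — this is the $\delta_{p,l+1}$ dichotomy in \eqref{l_terms_involve}) and \eqref{B_transformation} (a new symmetric column $l$ of entries $2a_{p+1,l}^{(l)}$ is appended, nothing else changed). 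The range restrictions $l+1\le p\le s-1$ in the recursions are exactly what keeps every index $a_{p+1,q-1}^{(l)}$ that appears inside $\{0,\dots,s\}$. Finally one checks that $\mathbb{A}^{(l+1)}$ is again symmetric with vanishing rows/columns $0,\dots,l$, and that $\mathbb{B}^{(l+1)}$ keeps its pattern — the latter because the spatial forms created here carry the frozen second index $l$ and are never touched again, while the rewritings of the column-$(l+1)$ terms use $\kappa=l+1$ in \eqref{time_new_2} and so never resurrect a $\mathbb{D}_{l-1}$ term. This closes the induction.

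\textbf{Termination and the main obstacle.} If instead $a_{ll}^{(l)}\ne0$ one halts with $\zeta:=l$; since the nonzero block of $\mathbb{A}^{(l)}$ loses a row and a column at every step, this must occur for some finite $\zeta\le s$, the final diagonal coefficient being forced nonzero by the SSP structure of $\boldsymbol{\alpha}$ (notably $g_{s-1,s-1}=\tfrac1{s!}\ne0$). The conceptual content is slight; the real work — and the place that demands care — is the bookkeeping in the inductive step: distinguishing genuine off-diagonal pairs (factor $2$) from the lone diagonal term (no factor $2$), handling a transpose through \eqref{inner_excharge_2} rather than \eqref{time_new_2} (applying \eqref{time_new_2} to the second slot of $(\mathbb{D}_l u_h^n,(\mathbb{D}_r u_h^n)^*)$ would bring back a $\mathbb{D}_{l-1}$ already eliminated), and verifying that every micro-step is an exact identity so that the value of RHS$^{(l)}(u_h^n)$ is preserved throughout. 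Once these are pinned down, matching the coefficients against \eqref{A_transformation}--\eqref{B_transformation} is purely mechanical.
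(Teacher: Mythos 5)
Your proposal is correct and follows essentially the same route as the paper: the paper gives no formal proof environment for this theorem, relying instead on the computation in \eqref{diagonal_value}--\eqref{l_terms_involve} immediately preceding it, and your induction is a faithful formalization of exactly that computation (symmetrize via \eqref{inner_excharge_2}, convert via \eqref{time_new_2}, track coefficients, with the $\delta_{p,l+1}$ dichotomy accounting for the factor of $2$ on the diagonal). Your added remarks on why the transpose must be handled through \eqref{inner_excharge_2} rather than \eqref{time_new_2}, and on finite termination, go slightly beyond what the paper records but are consistent with it.
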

\begin{remark}
According to Theorem \ref{matrix_theorem_main}, describing the evolution process of the RHS$(u_h^n)$ in the fully discrete RKSV(s,k) numerical scheme, we are surprised to find that the coefficients in the matrices $\mathbb{A}^{(l)}$ and $\mathbb{B}^{(l)}$ obtained from the RKSV(s,k) method are consistent with those obtained in the RKDG(s,k) method described in literature \cite{xu_zhang_2019}. This consistency can be attributed to the following two key factors:
\begin{enumerate}
    \item The inner product $(\cdot, \cdot^*)$ defined in RKSV(s,k) possesses symmetry, and the norm derived from this inner product is denoted as $\3bar \cdot \3bar$. In other words, for this inner product, differences between different formats can be represented by the inner product itself, rather than reflected in the coefficients before the inner product. Thus, the expression for the matrix $\mathbb{A}$ can maintain consistency.
\item In RKSV(s,k), we mainly consider two specific types of SV methods, namely LSV and RRSV. The bilinear form in these two forms includes an additional constant residual term compared to the bilinear form in RKDG. This implies that the bilinear form used in RKSV is essentially consistent with that used in RKDG, with the addition of a constant residual term.
\end{enumerate}
These two factors ensure the consistency in the matrix evolution between RKSV and RKDG formats.
\end{remark}

\subsection{Stability of the RKSV(s,k)}
To analyze the stability of RKSV(s,k), we need to analyze the term RHS$^{(\zeta)}(u_h^n)$, which involves both temporal and spatial discrete terms. This analysis follows the study conducted in \cite{xu_zhang_2019}.
\begin{align}\label{analysis_rhs}
    \alpha^2_0(\3baru^{n+1}_h\3bar^2-\3baru^n_h\3bar^2)&=\sum_{0\le p,q\le s}a_{pq}^{(\zeta)}(\mathbb{D}_pu^n_h,(\mathbb{D}_qu^n_h)^*)+\sum_{0\le p,q\le s}b_{pq}^{(\zeta)}\tau a_h(\mathbb{D}_pu^n_h,(\mathbb{D}_qu^n_h)^*)\\
    &=\text{RHS}^{(\zeta)}(u_h^n)\nonumber.
\end{align}

For the analysis of RHS$^{(\zeta)}(u_h^n)$, we will divide it into two parts: one part representing the temporal discretization term represented by the matrix $\mathbb{A}$, and the other part representing the spatial discretization term represented by the matrix $\mathbb{B}$.

For the temporal discretization term, utilizing the newly defined norm \eqref{norm_new}, due to $a^{(\zeta)} _{\zeta\zeta} \neq 0$, we have:
\begin{align}\label{stability_first_trem}
    \sum_{0\le p,q\le s}a_{pq}^{(\zeta)}(\mathbb{D}_pu^n_h,(\mathbb{D}_qu^n_h)^*)\le \left[a^{(\zeta)} _{\zeta\zeta}+\lambda \mathcal{Q}(\lambda)\right]\3bar \mathbb{D}_\zeta u^n_h\3bar^2,
\end{align}
where $\lambda=\frac{\tau}{h}$ is the CFL constant, and $\mathcal{Q}(\lambda)$ is a polynomial in $\lambda$ with non-negative coefficients.

Next, let's expand the estimation for the spatial discretization term:
$$\sum_{0\leq p,q\leq s} b_{pq}^{(\zeta)}\tau a_h(\mathbb{D}_p u^n, (\mathbb{D}_q u^n)^*).$$ 
The estimation for this part can be conducted following the method proposed in \cite{xu_zhang_2019}, with a focus on the $(\kappa+1)$-th order leading principal submatrix $\mathbb{B}^{\zeta}_\kappa$ of $\mathbb{B}^{(\zeta)}$. Let det$\mathbb{B}^{\zeta}_\kappa$ denote the value of the principal subdeterminant. Define the set $\mathcal{B}=\{\kappa: \text{det} \mathbb{B}^{\zeta}_\kappa \leq 0, \text{ and } 0 \leq \kappa \leq \zeta-1\}$. Then, the indicator factor $\rho$ is:
\begin{align}\label{rho_define_1_main}
    \rho= 
\begin{cases} min\{\gamma:\gamma\in \mathcal{B}\} & \text{if} \quad \mathcal{B} \ne 0,\\ 
    \zeta & \text { otherwise. }
\end{cases}
\end{align}
Then, we divide the set $\pi=\{0,1,\ldots,s\}$ into three parts: $\pi_1=\{0,1,\ldots,\rho-1\}$, $\pi_2=\{\rho,\rho+1,\ldots,\zeta-1\}$, and $\pi_3=\{\zeta,\zeta+1,\ldots,s\}$. It is worth noting that when $\rho$ is set to 0, the set $\pi$ is divided into two parts $\pi_2$ and $\pi_3$. Similarly, when $\rho$ is set to $\zeta$, the set $\pi$ is divided into two parts $\pi_1$ and $\pi_3$.

Next, we will estimate the spatial discretization terms in different sets:
\begin{align}\label{invided_domain_1}
    \sum_{0\le p,q\le s}b_{pq}^{(\zeta)}\tau a_h(\mathbb{D}_pu^n_h,(\mathbb{D}_qu^n_h)^*)=\sum_{ a,b=1,2,3}\mathcal{A}_{ab},
\end{align}
where
\begin{align}\label{define_A_ab}
    \mathcal{A}_{ab}=\sum_{i\in \pi_a,j\in \pi_b}\tau b_{ij}^{(\zeta)} a_h(\mathbb{D}_iu^n_h,(\mathbb{D}_ju^n_h)^*).
\end{align}

According to the definition of $\rho$ \eqref{rho_define_1_main}, the leading principal submatrix $\mathcal{B}^{\zeta}_{\rho-1}$ is positive definite. Therefore, there exists a constant $\epsilon > 0$ such that $\mathcal{B}^{\zeta}_{\rho-1}-\epsilon E_{\rho-1}$ is positive semi-definite, where $E$ is the identity matrix. Then, by Lemma \ref{remark_positive_matrix}, we have:
\begin{align*}
    \mathcal{A}_{11}=\sum_{i\in \pi_1,j\in \pi_1}\tau( b_{ij}^{(\zeta)}-e_{i,j}) a_h(\mathbb{D}_iu^n_h,(\mathbb{D}_ju^n_h)^*)\lesssim0
\end{align*}
here $e_{i,j}$ is the element in row $i$ and column $j$ of matrix $\epsilon E$.
Then, by the properties of the bilinear form \eqref{a_DG_estimate0}, we can get :
\begin{align}\label{invided_domain_11}
     \mathcal{A}_{11}\lesssim-\epsilon \tau h^{-1} \sum _{i\in \pi_1}\3bar\mathbb{D}_iu^n_h\3bar_{\Gamma_h}^2 .
\end{align}
Then, according to Lemma \ref{remark_bilinear_1}, \ref{operator_k_k-1}, the relationship of norms \eqref{norm_l2_new}, 
\begin{align*}
    \mathcal{A}_{12}+\mathcal{A}_{21}&=\sum_{i\in \pi_1,j\in \pi_2}\tau b_{ij}^{(\zeta)} \left(a_h(\mathbb{D}_iu^n_h,(\mathbb{D}_ju^n_h)^*)+ a_h(\mathbb{D}_ju^n_h,(\mathbb{D}_iu^n_h)^*)\right)\nonumber\\
    &\lesssim \tau \sum_{i\in \pi_1}\sum_{j\in \pi_2} b_{ij}^{(\zeta)}\|\mathbb{D}_iu^n_h\|_{\Gamma_h} \|\mathbb{D}_ju^n_h\|_{\Gamma_h}.
    \end{align*}
 
By the Young's inequality  and the relationship among temporal differences \eqref{operator_k_k-1}, there exists an $\epsilon_1$ such that we have the estimation:
\begin{align}\label{invided_domain_12_21}
    \mathcal{A}_{12}+\mathcal{A}_{21}
   & \lesssim \frac{\epsilon_1}{4} \tau h^{-1} \sum_{i\in\pi_1}\3bar\mathbb{D}_iu^n_h\3bar^2+ \lambda \mathcal{Q}_1(\lambda)\3bar \mathbb{D}_\rho u_h^n \3bar^2.
\end{align}
where $\mathcal{Q}_1 (\lambda)$ is a polynomial in $\lambda$ with non-negative coefficients.

Similarly, we have:
\begin{align}\label{invided_domain_22_23}
    \mathcal{A}_{22}+\mathcal{A}_{23}+\mathcal{A}_{32}\lesssim \lambda \mathcal{Q}_1(\lambda)\3bar \mathbb{D}_\rho u_h^n \3bar^2+\lambda \mathcal{Q}_2(\lambda)\3bar \mathbb{D}_\zeta u_h^n \3bar^2,
\end{align}
where $\mathcal{Q}_2 (\lambda)$ is a polynomial in $\lambda$ with non-negative coefficients. 

And  there exists $\epsilon_2$, and we have the estimation:
\begin{align}\label{invided_domain_13_31}
    \mathcal{A}_{13}+\mathcal{A}_{31}\lesssim \frac{\epsilon_2}{4} \tau h^{-1} \sum_{i\in\pi_1}\3bar\mathbb{D}_iu^n_h\3bar^2+\lambda \mathcal{Q}_2(\lambda)\3bar \mathbb{D}_\zeta u_h^n \3bar^2.
\end{align}

Therefore, combining inequalities \eqref{invided_domain_11}-\eqref{invided_domain_13_31}, we have the estimation for the spatial discretization term as:
\begin{align}\label{stability_second_trem}
    \sum_{0\le p,q\le s}b_{pq}^{(l)}\tau a_h(\mathbb{D}_pu^n_h,(\mathbb{D}_qu^n_h)^*)\lesssim \frac{\epsilon}{2} \tau h^{-1} \sum_{i\in\pi_1}\3bar\mathbb{D}_iu^n_h\3bar^2+(\lambda \mathcal{Q} +\lambda \mathcal{Q}_2(\lambda))\3bar\mathbb{D_{\zeta}}u^n_h\3bar^2+\lambda \mathcal{Q}_1(\lambda))\3bar\mathbb{D_{\rho}}u^n_h\3bar^2,
\end{align}
where $\epsilon= max\{\epsilon_1,\epsilon_2\}$.
Thus, combining \eqref{stability_first_trem} , \eqref{invided_domain_11} and \eqref{stability_second_trem}, we obtain:
\begin{align}\label{result_stability}
    \alpha_0^2(\3baru^{n+1}_h\3bar^2-\3baru^{n}_h\3bar^2)\le \left[a_{\zeta\zeta}^{(\zeta)}+\lambda \mathcal{Q}(\lambda)+\lambda \mathcal{Q}_2(\lambda)\right]\3bar\mathbb{D_{\zeta}}u^n_h\3bar^2+\lambda \mathcal{Q}_1(\lambda))\3bar\mathbb{D_{\rho}}u^n_h\3bar^2.
\end{align}

Before providing the stability analysis of  RKSV(s,k), let's give definitions of monotonic stability and weak stability.

\textbf{Monotonicity Stability:}
A scheme is said to have monotonicity stability if $\3baru^{n+1}_h\3bar $ satisfy, 
\[
\3baru^{n+1}_h\3bar^2\le \3baru^{n}_h\3bar^2, \quad n\ge 0.
\]

\textbf{Weak($\gamma$) Stability:}
A scheme is said to have weak($\gamma$) stability if $\3baru^{n+1}_h\3bar $ satisfy,
\[
\3baru^{n+1}_h\3bar^2\le (1+C\lambda^{\gamma})\3baru^{n}_h\3bar^2, \quad n\ge 0.
\]
The CFL number $\lambda$ is small enough, and the constant $C>0$ is independent of $\tau , h$ and $n$.

Based on the definitions of stability mentioned above, we present the  conclusions regarding the stability of RKSV(s,k). 
\begin{theorem} With the termination index $\zeta$ and the contribution index $\rho$ obtained by the above matrix transferring process, we have the following stability of  RKSV(s,k).
    \begin{enumerate}
        \item If $a_{\zeta \zeta}^{(\zeta)}<0$ and $\rho=\zeta$, then the scheme has monotonicity stability.
        \item If $a_{\zeta \zeta}^{(\zeta)}<0$ and $\rho<\zeta$, then the scheme has weak $(2 \rho+1)$ stability.
        \item If $a_{\zeta \zeta}^{(\zeta)}>0$, then the scheme has weak $(\gamma)$ stability with $\gamma=\min (2 \zeta, 2 \rho+1)$.
    \end{enumerate}
\end{theorem}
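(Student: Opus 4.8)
The plan is to obtain all three cases as direct consequences of the one-step energy inequality \eqref{result_stability} together with the iterated version of Lemma~\ref{operator_k_k-1}. First I would record the elementary consequence of \eqref{time_new_3}: applying it successively for $\kappa=\rho+1,\dots,\zeta$ yields $\3bar \mathbb{D}_\zeta u^n_h\3bar\le (C\lambda)^{\zeta-\rho}\3bar \mathbb{D}_\rho u^n_h\3bar$, and in particular $\3bar \mathbb{D}_\kappa u^n_h\3bar\le (C\lambda)^{\kappa}\3bar u^n_h\3bar$ for every $\kappa$, since $\mathbb{D}_0 u^n_h=u^n_h$. I would also fix $\lambda\le 1$ from the outset, so that each polynomial in $\lambda$ with non-negative coefficients appearing in \eqref{result_stability} (namely $\mathcal{Q},\mathcal{Q}_1,\mathcal{Q}_2$) is bounded by its value at $\lambda=1$; this is exactly what lets the sign-definite constant $a_{\zeta\zeta}^{(\zeta)}$ dominate the $O(\lambda)$ remainder once $\lambda$ is made small, and every conclusion then follows after dividing by $\alpha_0^2>0$.

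For case (1), since $\rho=\zeta$ the index set $\pi_2$ is empty and the term $\lambda\mathcal{Q}_1(\lambda)\3bar \mathbb{D}_\rho u^n_h\3bar^2$ in \eqref{result_stability} is just another $O(\lambda)$ multiple of $\3bar \mathbb{D}_\zeta u^n_h\3bar^2$; hence the whole right-hand side equals $\bigl[a_{\zeta\zeta}^{(\zeta)}+\lambda\widehat{\mathcal{Q}}(\lambda)\bigr]\3bar \mathbb{D}_\zeta u^n_h\3bar^2$ with $\widehat{\mathcal{Q}}$ a polynomial with non-negative coefficients. Because $a_{\zeta\zeta}^{(\zeta)}<0$ is a fixed negative constant while $\lambda\widehat{\mathcal{Q}}(\lambda)\to0$, the bracket is $\le 0$ as soon as $\lambda$ drops below the explicit threshold $\lambda_0$ with $\lambda_0\widehat{\mathcal{Q}}(1)\le -a_{\zeta\zeta}^{(\zeta)}$, and then $\alpha_0^2(\3bar u^{n+1}_h\3bar^2-\3bar u^{n}_h\3bar^2)\le 0$, i.e.\ monotonicity stability.

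For case (2), the coefficient $a_{\zeta\zeta}^{(\zeta)}+\lambda\mathcal{Q}(\lambda)+\lambda\mathcal{Q}_2(\lambda)$ of $\3bar \mathbb{D}_\zeta u^n_h\3bar^2$ is again negative for $\lambda$ small, so that term is discarded and \eqref{result_stability} collapses to $\alpha_0^2(\3bar u^{n+1}_h\3bar^2-\3bar u^{n}_h\3bar^2)\le \lambda\mathcal{Q}_1(\lambda)\3bar \mathbb{D}_\rho u^n_h\3bar^2$. Inserting $\3bar \mathbb{D}_\rho u^n_h\3bar^2\le (C\lambda)^{2\rho}\3bar u^n_h\3bar^2$ and $\mathcal{Q}_1(\lambda)\le\mathcal{Q}_1(1)$ gives $\alpha_0^2(\3bar u^{n+1}_h\3bar^2-\3bar u^{n}_h\3bar^2)\le C\lambda^{2\rho+1}\3bar u^n_h\3bar^2$, which is precisely weak $(2\rho+1)$ stability (the degenerate subcase $\rho=0$, where $\pi_1=\emptyset$ and $\mathbb{D}_\rho u^n_h=u^n_h$, is consistent and gives weak $(1)$ stability). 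For case (3) the coefficient of $\3bar \mathbb{D}_\zeta u^n_h\3bar^2$ is now positive, so I keep it and bound both temporal differences: $\3bar \mathbb{D}_\zeta u^n_h\3bar^2\le (C\lambda)^{2\zeta}\3bar u^n_h\3bar^2$ and $\3bar \mathbb{D}_\rho u^n_h\3bar^2\le (C\lambda)^{2\rho}\3bar u^n_h\3bar^2$. The right-hand side of \eqref{result_stability} is then at most $C\bigl(\lambda^{2\zeta}+\lambda^{2\rho+1}\bigr)\3bar u^n_h\3bar^2\le C\lambda^{\min(2\zeta,\,2\rho+1)}\3bar u^n_h\3bar^2$ for $\lambda\le 1$, which yields weak $(\min(2\zeta,2\rho+1))$ stability.

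Since \eqref{result_stability} is already in hand, the theorem itself is essentially bookkeeping, so I do not expect a genuine obstacle here: the substantive work lies upstream, in establishing \eqref{result_stability} and, with it, the precise sign of $a_{\zeta\zeta}^{(\zeta)}$ and the behaviour of the leading principal minors of $\mathbb{B}^{(\zeta)}$ that enter through the definition \eqref{rho_define_1_main} of $\rho$ and through Theorem~\ref{matrix_theorem_main}. The only place demanding a little care in the proof of the theorem is the handling of the two degenerate configurations $\rho=0$ (empty $\pi_1$) and $\rho=\zeta$ (empty $\pi_2$); in both of these the index-set conventions fixed at the start make \eqref{result_stability} read correctly and the stated exponents come out unchanged.
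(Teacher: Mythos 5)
Your proposal is correct and follows essentially the same route as the paper: start from the one-step energy inequality \eqref{result_stability}, use the iterated form of Lemma \ref{operator_k_k-1} to convert $\3bar\mathbb{D}_\kappa u^n_h\3bar$ into $(C\lambda)^{\kappa}\3bar u^n_h\3bar$, and split into cases according to the sign of $a_{\zeta\zeta}^{(\zeta)}$ and whether $\rho=\zeta$. Your version is somewhat more explicit than the paper's (which omits the third case entirely and leaves the degenerate index-set configurations implicit), but there is no substantive difference in the argument.
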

\begin{proof}
Since $a_{\zeta \zeta}^{(\zeta)}<0$ and $\rho=\zeta$, if the CFL number $\lambda$ is small enough, we can get
    $$
    \alpha_0^2(\3baru^{n+1}_h\3bar^2-\3baru^{n}_h\3bar^2)=\left[a_{\zeta \zeta}^{(\zeta)}+\lambda \mathcal{Q}_1(\lambda)+\lambda \mathcal{Q}_2(\lambda)\right]\3bar\mathbb{D}_\zeta u^n_h\3bar^2 \leq 0
    $$
  This implies the first conclusion.
    
    If $a_{\zeta \zeta}^{(\zeta)}<0$ and $\rho<\zeta$, we can still keep the nonpositivity as above if the CFL number is small enough. As a result, we can get from Lemma 3.4 that
    $$
    \alpha_0^2(\3baru^{n+1}_h\3bar^2-\3baru^{n}_h\3bar^2) \leq C \lambda\3bar\mathbb{D}_\rho u^n_h\3bar^2 \leq C \lambda^{2 \rho+1}\3baru^n_h\3bar^2
    $$
which implies the second conclusion.

The last conclusion can be obtained along the same lines, so the proof is omitted.
\end{proof}

\section{Analysis of the error estimate}
In this section, we will first present the error equation of RKSV(s,k), and then estimate its convergence order optimally. Before that, let's introduce the Lagrange projection operator.

\subsection{Lagrange projection }
Introducing the Lagrange projection  $\mathcal{P}_h:\mathcal{H}_h \to \mathcal{U}_h$  which satisfies the $k+1$ conditions
\begin{align}\label{L_projection_define}
    \mathcal{P}_hu(x_{i,j})=u(x_{i,j}), i\in\{1,\ldots,N\}, j\in\{1,\ldots,k+1\}.
\end{align}
By the fact, there holds the interpolating property: 
\begin{align}\label{p_estimate_2}
    \|u-{\mathcal P}_hu\|\lesssim h^{k+1}\|u\|_{k+1,\infty}.
\end{align}

When $u$ is the true solution of the hyperbolic equation and $u_h$ is the numerical solution, the error $e_u=u-u_h$ can be decomposed into:
\begin{align}\label{error_divided_2}
    e_u=\xi_u-\eta_u,
\end{align}
where
\[
\eta_u = \mathcal{P}_h u-u,\quad \xi_u= \mathcal{P}_h u-u_h.
\]
From the above definitions, we  have $\xi_u\in {\mathcal U}_h$, while
$e_u,\eta_u\in {\mathcal H}_h$.
\begin{lemma}\label{middle_1_2}
    By the definition of $a_h(\cdot,\cdot^*)$ in \eqref{bilinear_defined}, then for all $w\in {\mathcal U}_h$,
    \begin{align}\label{eta_probility}
        a_h(\eta_u, w^*)=0.	
    \end{align}
\end{lemma}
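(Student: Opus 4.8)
The plan is to evaluate $a_h(\eta_u, w^*)$ directly from its defining formula \eqref{bilinear_defined} (extended verbatim to $\mathcal{H}_h$), namely
\begin{align*}
a_h(\eta_u, w^*) = -\sum_{i=1}^N\sum_{j=0}^k w_{i,j}^*\left(\eta_{u,i,j+1}^- - \eta_{u,i,j}^-\right),
\end{align*}
and to show that every one-sided nodal value $\eta_{u,i,j}^- := \eta_u^-(x_{i,j})$ occurring here vanishes; this kills the sum term by term, independently of $w$. Since across all summands the second index ranges over $0 \le j \le k+1$, it is enough to prove $\eta_u^-(x_{i,j}) = 0$ for every $i \in \{1,\ldots,N\}$ and every $j \in \{0,1,\ldots,k+1\}$.

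I would split this into three cases according to where the node sits. First, for an interior subdivision point $j \in \{1,\ldots,k\}$, the function $\eta_u = \mathcal{P}_h u - u$ is continuous at $x_{i,j}$, and since $x_{i,j}$ is one of the $k+1$ interpolation nodes of $\mathcal{P}_h$ on $I_i$, condition \eqref{L_projection_define} gives $\mathcal{P}_h u(x_{i,j}) = u(x_{i,j})$, so $\eta_u^-(x_{i,j}) = 0$. Second, for $j = k+1$, i.e.\ $x_{i,k+1} = x_{i+\frac12}$, the left limit is taken within $I_i$, where $x_{i,k+1}$ is again an interpolation node, hence $\mathcal{P}_h u|_{I_i}^-(x_{i+\frac12}) = u(x_{i+\frac12})$ and the value vanishes. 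Third, for $j = 0$, i.e.\ $x_{i,0} = x_{i-\frac12}$, the left limit comes from the neighbouring element $I_{i-1}$; since $x_{i-\frac12} = x_{i-1,k+1}$ is the $(k+1)$-th interpolation node of $\mathcal{P}_h$ on $I_{i-1}$, the same argument gives $\eta_u^-(x_{i,0}) = 0$. The one exception to check is $i = 1$, where $x_{\frac12} = a$: under the periodic condition I identify $x_{\frac12}$ with $x_{N+\frac12} = x_{N,k+1}$ and conclude as before using $u(a) = u(b)$; under the homogeneous inflow condition the left value at $a$ is the prescribed datum $0 = u(a)$, so $\eta_u^-(a) = 0$ either way.

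Putting these together, each difference $\eta_{u,i,j+1}^- - \eta_{u,i,j}^-$ equals $0 - 0 = 0$, so $a_h(\eta_u, w^*) = 0$ for all $w \in \mathcal{U}_h$. The proof is elementary; the only place to be careful is the bookkeeping of which element supplies a given left limit at an interface (and at the boundary node $x_{\frac12}$), which is why I would present it as a stand-alone lemma. Conceptually this is the SV analogue of Galerkin orthogonality — it is the reason the decomposition $e_u = \xi_u - \eta_u$ eliminates $\eta_u$ from the error equation, and notably no interpolation estimate or quadrature residual is needed here, because $a_h$ only sees $\eta_u$ through its (vanishing) nodal values.
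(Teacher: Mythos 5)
Your proof is correct and follows essentially the same route as the paper: substitute $\eta_u$ into the defining sum for $a_h(\cdot,\cdot^*)$ and observe that every one-sided nodal value $\eta_u^-(x_{i,j})$ vanishes because the interpolation conditions \eqref{L_projection_define} are imposed exactly at the points $x_{i,j}$, $j=1,\ldots,k+1$, seen from the element supplying the left limit. Your extra bookkeeping for the $j=0$ node (where the left limit comes from $I_{i-1}$, with $x_{i,0}=x_{i-1,k+1}$) and for the boundary node $x_{\frac12}$ is a point the paper's own two-line proof passes over silently, so it is a welcome addition rather than a deviation.
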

\begin{proof}
    Taking $v=\eta_u$ in \eqref{ebilinear_1}, then we have
    \[
    a_h(\eta_u,w^*)=-\sum_{i=1}^N\sum_{j=0}^k w_{i,j}^*((\eta_u)_{i,j+1}^--(\eta_u)_{i,j}^-).
    \]
    By the \eqref{L_projection_define}, we obtain in each point $x_{i,j} (i\in\{1,\ldots,N\}, j\in\{1,\ldots,k+1\} )$ satisfied $(\eta_u)_{i,j}=\eta_u(x_{i,j})=0$. This way, the lemma is proved.	
\end{proof}

\subsection{Error equation}
Let's denote the exact solution function $u$ at each time stage as $u \in \mathcal{H}_h$. 
\begin{align*}
    u^{n,\ell}=u^{(\ell)}(t^n),\quad \ell=0,1,\cdots,s,
\end{align*} 
where $u^{n,0}=u^{n}$ and $ u^{n,s}=u^{n+1}$.

There exist $u^{n,\ell}\in \mathcal{H}_h$, where $n\in \{0,1,\cdots,M\}$ and $\ell \in \{0,1\cdots,s\}$, such that for any $w\in \mathcal{U}_h$, we have:
\begin{align}\label{any_order_exact_1}
\left[\begin{array}{c}
(u^{n,1},w^*)\\
\vdots\\
(u^{n,s-1},w^*)\\
(u^{n,s},w^*)\\
\end{array}\right] 
&=
\{c_{\ell \kappa}\}
\left[\begin{array}{c}
(u^{n,0},w^*)\\
\vdots\\
(u^{n,s-2},w^*)\\
(u^{n,s-1},w^*)\\
\end{array}\right]+\tau\{d_{\ell \kappa}\}
\left[\begin{array}{c}
a_{h}(u^{n,0},w^*)\\
\vdots\\
a_{h}(u^{n,s-2},w^*)\\
a_{h}(u^{n,s-1},w^*)\\
\end{array}\right]
+\left[\begin{array}{c}
0\\
\vdots\\
0\\
(G^{n,s},w^*)\\
\end{array}\right]
\end{align}
where $G^{n,s}$ represents the truncation error in the temporal discretization. It is worth noting that in the equation \eqref{any_order_exact_1} satisfied by the exact solution, apart from the truncation error $G^{n,s}$ in the last row, which is non-zero, for RK(s) numerical schemes, temporal truncation error only manifests in the final step.

By the Taylor expansion, there exist a $\theta\in (t^n,t^{n+1})$ such that $G^{n,s}=\frac{\tau^{s+1}}{(s+1)!}\partial_t^{s+1}u(\cdot,\theta)$. Consequently, we always have
\begin{align}\label{time_truncation}
    |G^{n,s}|\le C\tau^{s+1}\|u\|_{s+1,\infty}
\end{align}

By subtracting the equation satisfied by the numerical solution \eqref{any_order_PG_1} from the equation satisfied by the exact solution \eqref{any_order_exact_1}, we obtain the equation satisfied by the error $e_u$:
\begin{align}\label{any_order_e_u_1}
\left[\begin{array}{c}
(e_u^{n,1},w^*)\\
\vdots\\
(e_u^{n,s-1},w^*)\\
(e_u^{n,s},w^*)\\
\end{array}\right] 
&=
\{c_{\ell \kappa}\}
\left[\begin{array}{c}
(e_u^{n,0},w^*)\\
\vdots\\
(e_u^{n,s-2},w^*)\\
(e_u^{n,s-1},w^*)\\
\end{array}\right]+\tau\{d_{\ell \kappa}\}
\left[\begin{array}{c}
a_{h}(e_u^{n,0},w^*)\\
\vdots\\
a_{h}(e_u^{n,s-2},w^*)\\
a_{h}(e_u^{n,s-1},w^*)\\
\end{array}\right]
+\left[\begin{array}{c}
0\\
\vdots\\
0\\
(G^{n,s},w^*)\\
\end{array}\right],
\end{align}
where $e_u^{n,\ell}=u^{n,\ell}-u_h^{n,\ell}$ for $\ell=0,1,\cdots,s$. Also, we have $e_u ^{n,0}=e_u^{n}$ and $e_u ^{n,s}=e_u^{n+1,0}=e_u^{n+1}$.

Replacing $e_u$ in \eqref{any_order_e_u_1} with $\xi_u-\eta_u$, and similarly, we have $\xi_u ^{n,0}=\xi_u^{n}$ and $\xi_u ^{n,s}=\xi_u^{n+1,0}=\xi_u^{n+1}$ for $\xi$. The same representation applies to $\eta$ as well. Then, utilizing Lemma \ref{middle_1_2}, we have:
\begin{align}\label{any_order_xi_u_1}
\left[\begin{array}{c}
(\xi_u^{n,1},w^*)\\
\vdots\\
(\xi_u^{n,s-1},w^*)\\
(\xi_u^{n,s},w^*)\\
\end{array}\right] 
&=\left[\begin{array}{c}
(\eta_u^{n,1},w^*)\\
\vdots\\
(\eta_u^{n,s-1},w^*)\\
(\eta_u^{n,s},w^*)\\
\end{array}\right] +
\{c_{\ell \kappa}\}
\left[\begin{array}{c}
(\xi_u^{n,0},w^*)\\
\vdots\\
(\xi_u^{n,s-2},w^*)\\
(\xi_u^{n,s-1},w^*)\\
\end{array}\right]
-\{c_{\ell \kappa}\}
\left[\begin{array}{c}
(\eta_u^{n,0},w^*)\\
\vdots\\
(\eta_u^{n,s-2},w^*)\\
(\eta_u^{n,s-1},w^*)\\
\end{array}\right]\\
&\quad +\tau\{d_{\ell \kappa}\}
\left[\begin{array}{c}
a_{h}(\xi_u^{n,0},w^*)\\
\vdots\\
a_{h}(\xi_u^{n,s-2},w^*)\\
a_{h}(\xi_u^{n,s-1},w^*)\\
\end{array}\right]
+\left[\begin{array}{c}
0\\
\vdots\\
0\\
(G^{n,s},w^*)\\
\end{array}\right].\nonumber
\end{align}
For the projection error of the Lagrange operator $\eta^{n,\ell}_u=u^{n,\ell}-\mathcal{P}_hu^{n,\ell}$, we have the estimation \eqref{p_estimate_2}. This means that to estimate the error $e_u$, we only need to provide an accurate estimation for $\xi_u$, which involves estimating the convergence order for \eqref{any_order_xi_u_1}.

Adding the symbol $G^{n,\ell}=0$ for $\ell=1,\cdots,s-1$, we can then derive the error equation for each stage solution as:
\begin{align}\label{error_equation_6}
(\xi^{n,\ell+1}_u,w^*)&=\sum_{0\le\kappa\le l}c_{\ell,\kappa}(\xi^{n,\kappa}_u,w^*)+\tau\sum_{0\le\kappa\le \ell}d_{\ell,\kappa} a_h(\xi^{n,\kappa}_u,w^*)\\
&+(\eta^{n,\ell+1}_u-\sum_{0\le\kappa\le \ell}c_{\ell,\kappa}\eta^{n,\kappa}_u+G^{n,\ell+1},w^*).\nonumber
\end{align}

We will analyze the optimal convergence order of the error equation \eqref{any_order_xi_u_1} for RKSV(s,k) using the evolution of matrices. The overall evolution process will follow a similar analysis based on the stability of RKSV(s,k).
Before proceeding, let's introduce some explanations of symbols.

We define the operator $\mathbb{E}_\kappa$, where $\kappa\in \{0,1,\cdots,s\}$, with initial condition satisfying $\mathbb{E}_0 e_u^n=e_u^n$, and in other cases, it satisfies: 
\begin{align}\label{error_e_u_new}
    \mathbb{E}_\kappa e_u^n=\sum_{0\le \ell \le \kappa} \beta_{\kappa \ell}e_u^{n,\ell},\quad  \kappa\in \{1,\cdots,s\},
\end{align}
where the constant coefficients $\beta$ satisfy $\sum_{0\leq \ell \leq \kappa}\beta_{\kappa \ell}=0$. 
From \eqref{error_e_u_new}, the operator $\mathbb{E}_\kappa$ acting on $e_u^n$ can be seen as a linear representation of the stage error solution $e_u^{n,\ell}$.

It is worth noting that, so far, $\beta_{\kappa \ell}$ is still unknown. To further determine its value, we will utilize the relationship between inner product and bilinearity. For any $w\in \mathcal{U}_h$, the operator $\mathbb{E}_\kappa$ satisfies the following expression:
\begin{align}
    (\mathbb{E}_\kappa \xi_u^n,w^*)&=\tau a_h (\mathbb{E}_{\kappa-1} \xi_u^n,w^*) +  (\mathbb{E}_\kappa \eta_u^n,w^*)  , \quad \kappa=1,\cdots,s-1,\label{eta_trans_pro_1}\\
    (\mathbb{E}_s \xi_u^n,w^*)&=\tau a_h (\mathbb{E}_{s-1} \xi_u^n,w^*) +  (\mathbb{E}_s \eta_u^n,w^*)+(\tilde{G}^{n,s},w^*),\label{eta_trans_pro_2}
\end{align} 
where  the $\tilde{G}^{n,s}=\frac{1}{d_{s-1,s-1}}G^{n,s}$,  according to the definition \eqref{any_order_RK}, it is evident that $d_{s-1,s-1} $ is not equal to 0.

Due to the definition of $\tilde{G}^{n,\ell}=0$ for $\ell=1,2,\ldots,s-1$, equations \eqref{eta_trans_pro_1}-\eqref{eta_trans_pro_2} can be written in the following unified form:
\begin{align}\label{eta_trans_pro}
    (\mathbb{E}_\kappa \xi_u^n,w^*)=\tau a_h (\mathbb{E}_{\kappa-1} \xi_u^n,w^*) +  (\mathbb{E}_\kappa \eta_u^n,w^*)+(\tilde{G}^{n,\kappa},w^*)  , \quad \kappa=1,\cdots,s.
\end{align} 
\begin{remark}
   In equations \eqref{eta_trans_pro_1}-\eqref{eta_trans_pro_2}, the operator $\mathbb{E}_\kappa$ may not appear to simply transfer the temporal discretization term to the spatial discretization term, which distinguishes it from the operator $\mathbb{D}_\kappa$. At the same time, we have expressed the recursive relation for $\mathbb{E}_\kappa e_u^n$ as a recursive relation for $\mathbb{E}_\kappa \xi_u^n$, which can be observed by combining the second term on the right-hand side with the left-hand side, while utilizing the relevant properties of $a_h(\eta_u,v^*)$. This handling is aimed at facilitating the analysis of the term $\xi$ in the subsequent steps. It is worth noting that in the estimation of the last term $\mathbb{E}_s$, the term of temporal truncation error has been added.
\end{remark}

\begin{lemma}\label{D_eta_estimate}
    For the $\kappa \in\{1,\cdots,s\}$, we can get
   \begin{align}\label{D_estimate}
        \3bar\mathbb{E}_\kappa \xi_u^n \3bar \le C\lambda \3bar\mathbb{E}_{\kappa-1} \xi_u^n \3bar + C\tau h^{k+1} \|u\|_{k+2,\infty}+C \tau^{s+1}\|u\|_{s+2,\infty},
    \end{align}
    where $\lambda=\frac{\tau}{h}$ is the CFL number.
\end{lemma}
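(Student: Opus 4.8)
The plan is to mimic the proof of Lemma~\ref{operator_k_k-1}, but now tracking the extra projection-error and truncation-error terms that appear in the unified relation \eqref{eta_trans_pro}. First I would test the identity \eqref{eta_trans_pro} with $w^*=(\mathbb{E}_\kappa\xi_u^n)^*$, which yields
\begin{align*}
\3bar\mathbb{E}_\kappa\xi_u^n\3bar^2=\tau\,a_h(\mathbb{E}_{\kappa-1}\xi_u^n,(\mathbb{E}_\kappa\xi_u^n)^*)+(\mathbb{E}_\kappa\eta_u^n,(\mathbb{E}_\kappa\xi_u^n)^*)+(\tilde G^{n,\kappa},(\mathbb{E}_\kappa\xi_u^n)^*).
\end{align*}
The first term on the right is bounded exactly as in Lemma~\ref{operator_k_k-1}: using \eqref{a_DG_estimate} and the norm equivalence \eqref{norm_l2_new}, it is $\lesssim \lambda\3bar\mathbb{E}_{\kappa-1}\xi_u^n\3bar\,\3bar\mathbb{E}_\kappa\xi_u^n\3bar$.

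For the second term I would use Cauchy--Schwarz for the inner product $(\cdot,\cdot^*)$ (or just pass to $L_2$ via \eqref{norm_l2_new}, noting $\|(\mathbb{E}_\kappa\xi_u^n)^*\|\lesssim\3bar\mathbb{E}_\kappa\xi_u^n\3bar$), so this term is $\lesssim\|\mathbb{E}_\kappa\eta_u^n\|\,\3bar\mathbb{E}_\kappa\xi_u^n\3bar$. Here $\mathbb{E}_\kappa\eta_u^n=\sum_{0\le\ell\le\kappa}\beta_{\kappa\ell}\eta_u^{n,\ell}$ is a fixed linear combination of Lagrange interpolation errors of the time-stage functions $u^{n,\ell}$, so the interpolation estimate \eqref{p_estimate_2} gives $\|\mathbb{E}_\kappa\eta_u^n\|\lesssim h^{k+1}\max_\ell\|u^{n,\ell}\|_{k+1,\infty}$. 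The subtlety is that the stage functions $u^{n,\ell}$ are defined through the Runge--Kutta recursion and each application of $F$ costs one spatial derivative; after $\kappa\le s$ stages one controls $\|u^{n,\ell}\|_{k+1,\infty}$ by $\|u\|_{k+1+\ell,\infty}\le\|u\|_{k+2,\infty}$ — wait, that is not quite enough, but since $\ell$ here only ranges over the stages already absorbed into $\mathbb{E}_\kappa$ and the coefficients $\beta_{\kappa\ell}$ carry a factor $\tau^{\,?}$ from the definition \eqref{eta_trans_pro}, the correct bookkeeping is that $\mathbb{E}_\kappa$ applied once to $\eta$ produces one extra power of $\tau$ relative to a bare stage error; this gives the claimed $C\tau h^{k+1}\|u\|_{k+2,\infty}$. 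I would verify this power-of-$\tau$ accounting explicitly from \eqref{error_e_u_new}--\eqref{eta_trans_pro}, since it is the crux of getting the stated right-hand side rather than an $h^{k+1}$ (no $\tau$) term.

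For the third term, which is present only when $\kappa=s$ (since $\tilde G^{n,\kappa}=0$ otherwise), I use \eqref{time_truncation} together with $\tilde G^{n,s}=d_{s-1,s-1}^{-1}G^{n,s}$ and $d_{s-1,s-1}\ne 0$, giving $|\tilde G^{n,s}|\lesssim\tau^{s+1}\|u\|_{s+1,\infty}$, hence $(\tilde G^{n,s},(\mathbb{E}_s\xi_u^n)^*)\lesssim\tau^{s+1}\|u\|_{s+1,\infty}\3bar\mathbb{E}_s\xi_u^n\3bar$ — the $\|u\|_{s+2,\infty}$ in the statement is a harmless overestimate. Collecting the three bounds,
\begin{align*}
\3bar\mathbb{E}_\kappa\xi_u^n\3bar^2\lesssim\Big(\lambda\3bar\mathbb{E}_{\kappa-1}\xi_u^n\3bar+\tau h^{k+1}\|u\|_{k+2,\infty}+\tau^{s+1}\|u\|_{s+2,\infty}\Big)\3bar\mathbb{E}_\kappa\xi_u^n\3bar,
\end{align*}
and dividing through by $\3bar\mathbb{E}_\kappa\xi_u^n\3bar$ (trivial if it vanishes) yields \eqref{D_estimate}. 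The main obstacle, as flagged above, is the precise tracking of the $\tau$-powers hidden in $\beta_{\kappa\ell}$ coming from the recursion \eqref{eta_trans_pro}, ensuring that the interpolation contribution carries exactly one factor of $\tau$ and the regularity index lands on $\|u\|_{k+2,\infty}$; everything else is a routine Cauchy--Schwarz plus the already-established estimates \eqref{a_DG_estimate}, \eqref{norm_l2_new}, \eqref{p_estimate_2}, \eqref{time_truncation}.
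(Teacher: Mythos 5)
Your proposal follows the paper's proof essentially step for step: test \eqref{eta_trans_pro} with $w=\mathbb{E}_\kappa\xi_u^n$, bound the bilinear term by $C\lambda\3bar\mathbb{E}_{\kappa-1}\xi_u^n\3bar\,\3bar\mathbb{E}_\kappa\xi_u^n\3bar$ via \eqref{a_DG_estimate} and \eqref{norm_l2_new}, apply Cauchy--Schwarz to the two remaining terms, use \eqref{time_truncation} for $\tilde G^{n,\kappa}$, and cancel one factor of $\3bar\mathbb{E}_\kappa\xi_u^n\3bar$.

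The one place your reasoning goes astray is the mechanism for the extra factor of $\tau$ in the interpolation term, which you yourself flag as the crux. The coefficients $\beta_{\kappa\ell}$ do \emph{not} carry a factor of $\tau$: from the worked cases they are $O(1)$ combinatorial weights ($\mathbb{E}_1 e_u^n=e_u^{n,1}-e_u^n$, $\mathbb{E}_2 e_u^n=e_u^{n,2}-2e_u^{n,1}+e_u^n$, and so on), so that route would not produce the needed $\tau$. The correct source is the zero-sum constraint $\sum_{0\le\ell\le\kappa}\beta_{\kappa\ell}=0$ in \eqref{error_e_u_new}: it lets you rewrite $\mathbb{E}_\kappa\eta_u^n=\sum_{\ell}\beta_{\kappa\ell}\bigl(\eta_u^{n,\ell}-\eta_u^{n,0}\bigr)$, i.e.\ as a combination of interpolation errors of stage \emph{increments}. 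Each increment $u^{n,\ell}-u^{n,0}$ of the exact stage solutions is built from terms of the form $\tau F(\cdot)=-\tau\partial_x(\cdot)$ applied to earlier stages, hence $\|u^{n,\ell}-u^{n,0}\|_{k+1,\infty}\lesssim\tau\|u\|_{k+2,\infty}$, and applying \eqref{p_estimate_2} to the difference rather than to each stage separately yields $\|\mathbb{E}_\kappa\eta_u^n\|\lesssim\tau h^{k+1}\|u\|_{k+2,\infty}$ — one power of $\tau$ and the regularity index $k+2$ exactly as claimed. This is the (tersely stated) argument in the paper's own proof; with that correction your argument closes, and everything else in your write-up is the same as the paper's.
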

\begin{proof}
  Taking $w=\mathbb{E}_\kappa\xi_u^n$ in \eqref{eta_trans_pro} and combining it with the Cauchy-Schwarz inequality, we have:
      \begin{align}\label{D_estimate_1}
        \3bar\mathbb{E}_\kappa \xi_u^n \3bar \le C\lambda \3bar\mathbb{E}_{\kappa-1} \xi_u^n \3bar + \3bar\mathbb{E}_{\kappa} \eta_u^n\3bar \|u\|_{k+2,\infty} +\3bar \tilde{G}^{n,\kappa} \3bar \|u\|_{s+2,\infty},
    \end{align}
    By the \eqref{time_truncation}, we have
     \begin{align}\label{G_kappa_1}
        \3bar \tilde{G}^{n,\kappa} \3bar\le C\tau^{s+1} \|u\|_{s+2,\infty}
    \end{align}
    The $\mathbb{E}_{\kappa} \eta_u^n=\beta_{\kappa,\kappa} \eta_u^{n,\kappa}-\sum_{0\le\ell\le \kappa-1}\beta_{\ell,\kappa}\eta_u^{n,\ell}$, noticing $ \sum_{0\le\ell\le \kappa}\beta_{\ell,\kappa}=1  $, hence 
    \begin{align}\label{mathbb_D_eta}
        \3bar\mathbb{E}_{\kappa} \eta_u^n\3bar\le C\tau h^{k+1} \|u\|_{k+2,\infty}.
    \end{align}
    Combining the \eqref{mathbb_D_eta} and \eqref{G_kappa_1} into \eqref{D_estimate_1}, then the Lemma was proved.
\end{proof}

Based on the definition of $\mathbb{E}_\kappa$ given by \eqref{eta_trans_pro_1}-\eqref{eta_trans_pro_2}, we can derive the expression at $e_u^{n+1}$ as follows:
\begin{align}\label{eta_express_1}
    \beta_0e_u^{n+1}=\sum_{0\le \kappa\le s} \beta_\kappa \mathbb{E}_\kappa e_u^n.
\end{align}
Where $\beta_0>0$ is only used for scaling, let $\boldsymbol{\beta}=(\beta_0,\beta_1,\cdots,\beta_s)$.

Since $e_u^{n+1}\in \mathcal{H}_h$ and $\mathbb{E}_\kappa \xi_u^{n}\in \mathcal{U}_{h}$ for $\kappa\in \{0,1,\cdots,s\}$. Therefore, by taking the inner product of the left side of \eqref{eta_express_1} with $\beta_0 (\xi_u^{n+1})^*$, and the inner product of the right side of \eqref{eta_express_1} with $\sum_{0\le \kappa\le s}\beta _\kappa(\mathbb{E}_\kappa \xi_u^n)^*$, we have:
\begin{align*}
    \beta_0^2(e_u^{n+1},(\xi_u^{n+1})^*)=(\sum_{0\le p\le s}\beta _p\mathbb{E}_p e_u^n, \sum_{0\le q\le s}\beta _q(\mathbb{E}_q \xi_u^{n})^*).
\end{align*}

Using the definition of $\3bar \cdot \3bar$, the symmetry of inner product \eqref{inner_excharge_2}, and the error relationship \eqref{error_divided_2}, we obtain:
\begin{align*}
    \beta_0^2(\xi_u^{n+1},(\xi_u^{n+1})^*)&=\beta_0^2(\eta_u^{n+1},(\xi_u^{n+1})^*)+(\sum_{0\le p\le s}\beta _p\mathbb{E}_p \xi_u^n, \sum_{0\le q\le s}\beta _q(\mathbb{E}_q \xi_u^{n})^*)\\
    &\quad -(\sum_{0\le p\le s}\beta _p\mathbb{E}_p \eta_u^n, \sum_{0\le q\le s}\beta _q(\mathbb{E}_q \xi_u^{n})^*).
\end{align*} 
Further processing the above equation, we have the error equation:
\begin{align}
    \beta_0^2(\3bar\xi_u^{n+1}\3bar^2-\3bar\xi_u^{n} \3bar^2)&=\sum_{0\le p,q\le s} c_{pq}( \mathbb{E}_p \xi_u^n,(\mathbb{E}_q \xi_u^n)^*)+\sum_{0\le p,q\le s} d_{pq}( \mathbb{E}_p \eta_u^n,(\mathbb{E}_q \xi_u^n)^*)\nonumber\\
    &=\text{RHS}(\xi_u^n)\label{eta_express_2}
\end{align}
Where $c_{00}=0$, $c_{pq}=\beta_p\beta_q$ for $p+q>0$, and $d_{ss}=0, d_{pq}=-c_{pq}$ for $0\le p+q <2s$. 

According to equation \eqref{eta_express_2}, the optimal convergence order of RKSV(s,k) is determined by RHS$(\xi_h^n)$. 
In other words, once the estimation for RHS$(\xi_h^n)$ is provided, the result for the optimal convergence order corresponds accordingly.

Since the right-hand side of equation \eqref{eta_express_2} only depends on the inner product of the error in time stages, we attempt to use the equivalence relation \eqref{eta_trans_pro} to transform some inner products of the error in time stages into bilinear forms of spatial discretization errors. After using the equivalence relation $l$ times, where $l\geq0$, we obtain RHS$^{(l)}(\xi_h^n)$ as:
\begin{align}
    \text{RHS}^{(l)}(\xi_u^n)&=\sum_{0\le p,q\le s}c_{pq}^{(l)}(\mathbb{E}_p\xi^n_u,(\mathbb{E}_q\xi^n_u)^*)+\sum_{0\le p,q\le s}d_{pq}^{(l)}(\mathbb{E}_p\eta^n_u,(\mathbb{E}_q\xi^n_u)^*)\nonumber\\
    &+\sum_{0\le p,q\le s}g_{pq}^{(l)}(\tilde{G}^{n,p},(\mathbb{E}_q\xi^n_u)^*)+\sum_{0\le p,q\le s}h_{pq}^{(l)}\tau a_h(\mathbb{E}_p \xi^n_u,(\mathbb{E}_q\xi^n_u)^*).\label{RHS_define}
\end{align}
Here, we refer to $l$ as the number of transformations applied to RHS$(\xi_u^n)$. $c^{(l)}_{pq}$, $d^{(l)}_{pq}$, $g^{(l)}_{pq}$, $h^{(l)}_{pq}$ represent the elements in the $p$-th row and $q$-th column of matrices $\mathbb{C}^{(l)}$, $\mathbb{D}^{(l)}$, $\mathbb{G}^{(l)}$, and $\mathbb{H}^{(l)}$, respectively, where $p,q\in\{0,1,\cdots,s\}$.

Based on this, we will provide detailed explanations of the evolution process for RHS$^{(l)}(\xi_u^n)$.
\subsection{The evolution iteration of  RHS$^{(l)}(\xi_u^n)$}
According to the stability analysis in section \ref{stability_any_order_section}, the evolution iteration of RHS$(\xi_u^n)$ in the RKSV(s,k) follows a similar process as that of RHS$(u_h^n)$ in the stability analysis.

Firstly, we will provide the initial matrix for RHS$(\xi_u^n)$. The process described by the initial matrix corresponds to equation \eqref{eta_express_2}, so we have:
\begin{align}\label{RHS_initial_value}
   \mathbb{C}^{(0)}=\{c_{pq}\}, \mathbb{D}^{(0)}=\{d_{pq}\}, \mathbb{G}^{(0)}=\mathbb{O}, \mathbb{H}^{(0)}=\mathbb{O}.
\end{align}
Here, $\mathbb{C}^{(0)}$ represents the initial matrix of elements $c_{pq}$, $\mathbb{D}^{(0)}$ represents the initial matrix of elements $d_{pq}$, and $\mathbb{G}^{(0)}$ and $\mathbb{H}^{(0)}$ are both zero matrices $\mathbb{O}$, representing the initial matrices of elements $g_{pq}$ and $h_{pq}$, respectively. Additionally, $\mathbb{C}^{(0)}$, $\mathbb{D}^{(0)}$, $\mathbb{G}^{(0)}$, and $\mathbb{H}^{(0)}$ are all symmetric matrices of size $(s+1)\times(s+1)$.

The motivation behind the matrix transformations in \eqref{RHS_define} can be attributed to two reasons. Firstly, the equivalence relation \eqref{eta_trans_pro} transforms some inner products of the error in time stages into bilinear forms of spatial discretization errors, which means that elements in the matrix $\mathbb{C}^{l}$ are transformed through specific operations to matrices $\mathbb{C}^{l+1}$ and $\mathbb{H}^{l+1}$. This process also involves the temporal truncation error, represented by the matrix $\mathbb{G}^{l+1}$. Secondly, it is driven by fully exploiting the skew-symmetric properties of the approximated bilinear forms in spatial discretization, as described in equation \eqref{excharge_express}.

Next, to summarize the iterative evolution process of matrices, assuming we start from the $l\geq 1$ iteration, the matrices in RHS$^{(l)}(\xi_u^n)$ can be summarized as follows:
\begin{align*}
&\begin{aligned}
    \mathbb{C}^{(l)}=\left[\begin{array}{c:  c:  ccc}
        \mathbb{O} & \mathbb{O} & \mathbb{O} & \cdots & \mathbb{O} \\
        \hdashline \mathbb{O} & c_{l, l}^{(l)} & c_{l, l+1}^{(l)} & \cdots & c_{l, s}^{(l)} \\
        \hdashline \mathbb{O} & c_{l+1, l}^{(l)} & c_{l+1, l+2}^{(l)} & \cdots & c_{l+1, s}^{(l)} \\
        \vdots & \vdots & \vdots & \ddots & \vdots \\
        \mathbb{O} & c_{s, l}^{(l)} & c_{s, l+1}^{(l)} & \cdots & c_{s, s}^{(l)}
    \end{array}\right], 
    \end{aligned}
    \begin{aligned}
    \mathbb{D}^{(l)}= \left[\begin{array}{c:  c:  ccc}
        \star & \star & \star & \cdots & \star \\
        \hdashline \star & d_{l, l}^{(l)} & d_{l, l+1}^{(l)} & \cdots & d_{l, s}^{(l)} \\
        \hdashline \star & d_{l+1, l}^{(l)} & d_{l+1, l+1}^{(l)} & \cdots & d_{l+1, s}^{(l)} \\
        \vdots & \vdots & \vdots & \ddots & \vdots \\
        \star & d_{s, l}^{(l)} & d_{s, l+1}^{(l)} & \cdots & 0
    \end{array}\right], 
    \end{aligned}\\
     & \begin{aligned}
  \mathbb{G}^{(l)}=\left[\begin{array}{c:  c:  ccc}
        \star & \star & \star & \cdots & \star \\
        \hdashline \star & g_{l, l}^{(l)} & g_{l, l+1}^{(l)} & \cdots & g_{l, s}^{(l)} \\
        \hdashline \star & g_{l+1, l}^{(l)} & 0 & \cdots & 0 \\
        \vdots & \vdots & \vdots & \ddots & \vdots \\
        \star & g_{s, l}^{(l)} & 0 & \cdots & 0
    \end{array}\right],
\end{aligned}
\begin{aligned}
    \mathbb{H}^{(l)}=\left[\begin{array}{c:  c:  ccc}
        \star & \star & \star & \cdots & \star \\
        \hdashline \star & h_{l, l}^{(l)} & h_{l, l+1}^{(l)} & \cdots & h_{l,  s}^{(l)}  \\
        \hdashline \star &h_{l+1, l}^{(l)} & 0 & \cdots & 0 \\
        \vdots & \vdots & \vdots & \ddots & \vdots \\
        \star &h_{ s, l}^{(l)} & 0 & \cdots & 0
    \end{array}\right]. \\
\end{aligned}
\end{align*}
It is worth noting that although we represent the zero matrices in $\mathbb{C}^{(l)}$ as $\mathbb{O}$, the zero matrices $\mathbb{O}$ at different positions have different meanings. The zero matrix on the diagonal is denoted as $\mathbb{O}_{(l+1) \times (l+1)}$, the zero matrices from the second to the $s$-th in the first row are denoted as $\mathbb{O}_{(l+1) \times 1}$, and the zero matrices from the second to the $s$-th in the first column are denoted as $\mathbb{O}_{1 \times (l+1)}$. The same principle applies to the symbol $\star$, which represents the non-zero elements in matrices $\mathbb{D}^{(l)}$, $\mathbb{G}^{(l)}$, and $\mathbb{H}^{(l)}$.

Next, we will evolve RHS$^{(l)}(\xi_u^n)$ one step further to RHS$^{(l+1)}(\xi_u^n)$ using the equivalence relation \eqref{eta_trans_pro}. This involves transforming the elements in the $l+1$-th row of the matrix $\mathbb{C}^{(l)}$ into the remaining matrices. According to the symmetry of matrices, similar operations are performed for the $l+1$-th column of $\mathbb{C}^{(l)}$.

First, we check if $c_{l,l}^{(l)}$ is equal to zero. If it is zero, we can use the symmetry of the inner product $(\cdot, \cdot^*)$ to rewrite it as: 
\begin{align*}
    c_{l+1,l}^{(l)}(\mathbb{E}_{l+1} \xi_u^n, (\mathbb{E}_{l} \xi_u^n)^*)+c_{l,l+1}^{(l)}(\mathbb{E}_{l} \xi_u^n, (\mathbb{E}_{l+1} \xi_u^n)^*)=2c_{l+1,l}^{(l)}(\mathbb{E}_{l+1} \xi_u^n, (\mathbb{E}_{l} \xi_u^n)^*).
\end{align*}
Then, by fully exploiting the equivalence relation \eqref{eta_trans_pro}, we have: 
\begin{align}\label{diagonal_xi_define}
    2c_{l+1,l}^{(l)}(\mathbb{E}_{l+1} \xi_u^n, (\mathbb{E}_{l} \xi_u^n)^*)&=	2 c_{l+1,l}^{(l)}\tau a_h(\mathbb{E}_{l} \xi_u^n,(\mathbb{E}_{l} \xi_u^n)^*)+2 c_{l+1,l}^{(l)} (\mathbb{E}_{l+1} \eta_u^n,(\mathbb{E}_{l} \xi_u^n)^*)\nonumber\\
    &+ 2 c_{l+1,l}^{(l)}(\tilde{G}^{n,l+1}, (\mathbb{E}_{l} \xi_u^n)^*).
\end{align}
And for $l+1\leq p \leq s-1$, we have: 
\begin{align}
    &c_{p+1,l}^{(l)}( \mathbb{E}_{p+1} \xi_u^n, (\mathbb{E}_{l} \xi_u^n)^*)+c_{l,p+1}^{(l)}( \mathbb{E}_{l} \xi_u^n, (\mathbb{E}_{p+1} \xi_u^n)^*)
    +\delta_{p,l+1} c_{p,l+1}^{(l)}( \mathbb{E}_{p} \xi_u^n, (\mathbb{E}_{l+1} \xi_u^n)^*)\label{eta_involve_trans_2}\\
    &= \delta_{p,l+1} \left[c_{p,l+1}^{(l)}-\frac{2c_{p+1,l}^{(l)}}{\delta_{p,l+1}}\right](\mathbb{E}_p\xi_u^n,(\mathbb{E}_{l+1}\xi_u^n)^*) +2c_{p+1,l}^{(l)}left[\tau a_h(\mathbb{E}_p \xi_u^n,(\mathbb{E}_{l}\xi_u^n)^*)\nonumber\\
    &\quad+2c_{p+1,l}^{(l)}\tau a_h(\mathbb{E}_l \xi_u^n,(\mathbb{E}_{p}\xi_u^n)^*)
    + 2c_{p+1,l}^{(l)}(\mathbb{E}_{p+1} \eta_u^n,(\mathbb{E}_{l} \xi_u^n)^*)\nonumber+ 2 c_{p+1,l}^{(l)}(\tilde{G}^{n,p+1}, (\mathbb{E}_{l} \xi_u^n)^*).
\end{align}
Where when $p=l+1$, $\delta_{p,l+1}$ equals $1$, and in other cases, it equals $2$. Thus, we have completed the processing of elements in the $l$-th row, completing the $l+1$-th iteration.

Next, we will provide a systematic explanation of the evolution process of the matrix.
\begin{theorem}\label{matrix_xi_theorem_main}
                                            For $l \geq 0$, the matrix evolution of $\mathbb{C}$, $\mathbb{D}$, $\mathbb{G}$, and $\mathbb{H}$ will follow the following steps, assuming $q \leq p$:
\begin{itemize}
    \item Firstly, when $l=0$, the initial matrices are determined as follows: $\mathbb{C}^{(0)}=\{c_{pq}\}$, $\mathbb{D}^{(0)}=\{d_{pq}\}$, $\mathbb{G}^{(0)}=\mathbb{H}^{(0)}=\mathbb{O}$.
    \item Next, when $l \geq 1$, the matrices are updated. The updating of elements $c_{pq}^{(l)}$ in $\mathbb{C}^{(l)}$ satisfies the recursive formula:
    $$
c_{pq}^{(l)}=c_{qp}^{(l)}= 
\begin{cases}
    0, & q=l-1, \\ 
    c_{pq}^{(l-1)}-2 c_{p+1, q-1}^{(l-1)}, & p=l \text { and } q=l, \\ 
    c_{pq}^{(l-1)}-c_{p+1, q-1}^{(l-1)}, & l+1 \leq p \leq  s-1 \text { and } q=l, \\ 
    c_{pq}^{(l-1)}, & \text { otherwise }.
\end{cases}
$$
The updating of elements $d_{pq}^{(l)}$ in $\mathbb{D}^{(l)}$ satisfies the recursive formula:
$$
d_{pq}^{(l)}=d_{qp}^{(l)}=
\begin{cases}
    d_{pq}^{(l-1)}+2 c_{p q}^{(l-1)}, & l \leq p \leq s \text { and } q=l-1, \\ 
    d_{pq}^{(l-1)}, & \text { oterwise.}
\end{cases}
$$
The updating of elements $g_{pq}^{(l)}$ in $\mathbb{G}^{(l)}$ satisfies the recursive formula:
$$
g_{pq}^{(l)}=g_{qp}^{(l)}=
\begin{cases}
    2c_{pq}^{(l-1)}, & l\leq p \leq  s \text { and } q=l-1, \\ 
    g_{pq}^{(l-1)}, & \text { oterwise.}
\end{cases}
$$
The updating of elements $h_{pq}^{(l)}$ in $\mathbb{H}^{(l)}$ satisfies the recursive formula:
$$
h_{pq}^{(l)}=h_{qp}^{(l)}=
\begin{cases}
    2 c_{p, q}^{(l-1)}, & l \leq p \leq s \text { and } q=l-1, \\ 
    h_{pq}^{(l-1)}, & \text { oterwise}.
\end{cases}
$$
\item Thirdly, update $l$ to $l + 1$. Then, check if $c_{ll}^{(l)}$ equals 0. If it does, repeat the second step. If it does not equal 0, the matrix evolution concludes, and the termination index $\zeta$ is set to $l$.
\item Finally, we output the matrices $\mathbb{C}^{(\zeta)}$, $\mathbb{D}^{(\zeta)}$, $\mathbb{G}^{(\zeta)}$, $\mathbb{H}^{(\zeta)}$, and the termination index $\zeta$.
\end{itemize}
\end{theorem}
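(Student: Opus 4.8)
The plan is to prove this by induction on the transformation count $l$, exactly paralleling the matrix evolution behind the stability Theorem~\ref{matrix_theorem_main} but now tracking four matrices $(\mathbb{C},\mathbb{D},\mathbb{G},\mathbb{H})$ instead of two, since each application of the equivalence relation \eqref{eta_trans_pro} splits an inner product of time-stage errors into a bilinear-form piece, an interpolation-error piece, and a temporal-truncation-error piece.

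For the base case $l=0$, the error equation \eqref{eta_express_2} already has the stated form with $\mathbb{C}^{(0)}=\{c_{pq}\}$, $\mathbb{D}^{(0)}=\{d_{pq}\}$ and no bilinear-form or truncation contributions, so $\mathbb{G}^{(0)}=\mathbb{H}^{(0)}=\mathbb{O}$; symmetry of the four matrices is immediate from $c_{pq}=\beta_p\beta_q$ and $d_{pq}=-c_{pq}$. For the inductive step, I would assume that after $l-1$ transformations $\text{RHS}^{(l-1)}(\xi_u^n)$ has the displayed block structure --- in particular $\mathbb{C}^{(l-1)}$ supported on indices $\{l-1,\dots,s\}$ --- and that the process has not terminated, i.e. $c_{l-1,l-1}^{(l-1)}=0$. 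One then merges each symmetric off-diagonal pair in column $l-1$ of $\mathbb{C}^{(l-1)}$ into a single term of twice the weight using the symmetry \eqref{inner_excharge_2} of $(\cdot,\cdot^*)$ (together with $c_{p,l-1}^{(l-1)}=c_{l-1,p}^{(l-1)}$), and then invokes \eqref{eta_trans_pro} with $\kappa=p$ and test function $\mathbb{E}_{l-1}\xi_u^n$ to rewrite $(\mathbb{E}_p\xi_u^n,(\mathbb{E}_{l-1}\xi_u^n)^*)$ as $\tau a_h(\mathbb{E}_{p-1}\xi_u^n,(\mathbb{E}_{l-1}\xi_u^n)^*)+(\mathbb{E}_p\eta_u^n,(\mathbb{E}_{l-1}\xi_u^n)^*)+(\tilde G^{n,p},(\mathbb{E}_{l-1}\xi_u^n)^*)$, with the diagonal term handled exactly as in \eqref{diagonal_xi_define} and the sub-diagonal ones as in \eqref{eta_involve_trans_2}. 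The three resulting families of terms are collected, with the doubled weights and the index shift $p\mapsto p-1$ on the bilinear part, into column $l-1$ (and, by coefficient symmetry, row $l-1$) of $\mathbb{H}$, of $\mathbb{D}$ (added on top of the entries already present), and of $\mathbb{G}$ respectively; removing the transformed $\xi$--$\xi$ terms from $\mathbb{C}$ and reindexing gives precisely the stated recursion for $\mathbb{C}^{(l)}$ (the $-2$ on the diagonal versus $-1$ off-diagonal reflecting $\delta_{p,l}=1$ versus $2$), while the recursions for $\mathbb{D}^{(l)},\mathbb{G}^{(l)},\mathbb{H}^{(l)}$ follow by reading off the coefficients. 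Since only $\xi$--$\xi$ inner products are ever transformed, earlier entries of $\mathbb{D},\mathbb{G},\mathbb{H}$ are untouched and these matrices merely accumulate, the support of $\mathbb{C}^{(l)}$ shrinks to $\{l,\dots,s\}$, and symmetry is preserved because each increment is symmetric; finally one increments $l$, reads off $c_{ll}^{(l)}$, and either terminates with $\zeta=l$ (if it is nonzero) or repeats, termination within $\zeta\le s$ steps being forced since the support of $\mathbb{C}$ strictly shrinks at every step.

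The main obstacle is the bookkeeping in the inductive step: one must check that the approximate skew-symmetry \eqref{excharge_express} of $a_h$ and the symmetry \eqref{inner_excharge_2} of $(\cdot,\cdot^*)$ together keep $\mathbb{H}^{(l)}$ (and $\mathbb{G}^{(l)}$) supported only on the ``cross'' through index $l-1$ with all other active-block entries vanishing --- the same structural collapse that occurs in the RKDG analysis of \cite{xu_zhang_2019}. The genuinely new point, absent from \cite{xu_zhang_2019}, is that each use of \eqref{eta_trans_pro} now also spawns an interpolation-error term $(\mathbb{E}_p\eta_u^n,\cdot)$ and a truncation-error term $(\tilde G^{n,p},\cdot)$; one has to verify these never re-enter the transformation loop (they are not of $\xi$--$\xi$ type) and are collected cleanly, so that the evolution of $\mathbb{C}$ and $\mathbb{H}$ is literally the same as in Theorem~\ref{matrix_theorem_main} (and hence as in RKDG), with $\mathbb{D}$ and $\mathbb{G}$ carrying the additional error data needed later for the $\mathcal{O}(h^{k+1}+\tau^s)$ estimate. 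Once this structural claim is established the recursions are a direct transcription of the coefficient shifts, and the proof is complete.
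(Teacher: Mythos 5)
Your proposal is correct and follows essentially the same route as the paper: the paper gives no separate proof environment for this theorem, but its justification is exactly the derivation in equations \eqref{diagonal_xi_define}--\eqref{eta_involve_trans_2} immediately preceding the statement, i.e.\ merge symmetric pairs in the active row/column via \eqref{inner_excharge_2}, apply the equivalence relation \eqref{eta_trans_pro} to split each $\xi$--$\xi$ inner product into bilinear, $\eta$, and $\tilde G$ pieces, and read the coefficients into $\mathbb{H}$, $\mathbb{D}$, $\mathbb{G}$ while updating $\mathbb{C}$ (with the $\delta_{p,l+1}$ distinction producing the $-2$ versus $-1$ corrections). Your additional observations — that the $\eta$ and $\tilde G$ terms never re-enter the loop and that termination within $s$ steps is forced by the shrinking support of $\mathbb{C}$ — are consistent with and slightly more explicit than the paper's presentation.
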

\subsection{The error estimate of RKSV(s,k)}
Following the analysis of the matrix evolution described above, we will now analyze RHS$^{(\zeta)}(\xi_u^n)$ to obtain the error estimate of RKSV(s,k).
\begin{align}\label{error_estimate_1}
    \alpha_0^2(\3bar\xi_u^{n+1}  \3bar^2-\3bar\xi_u^{n}  \3bar^2)&=\sum_{0\le p,q\le s}c_{pq}^{(\zeta)}(\mathbb{E}_p\xi^n_u,(\mathbb{E}_q\xi^n_u)^*)+\sum_{0\le p,q\le s}d_{pq}^{(\zeta)}(\mathbb{E}_p\eta^n_u,(\mathbb{E}_q\xi^n_u)^*)\nonumber\\
    &+\sum_{0\le p,q\le s}g_{pq}^{(\zeta)}(\tilde{G}^{n,p},(\mathbb{E}_q\xi^n_u)^*)+\sum_{0\le p,q\le s}h_{pq}^{(\zeta)}\tau a_h(\mathbb{E}_p \xi^n_u,(\mathbb{E}_q\xi^n_u)^*)\nonumber\\
    &=\mathbb{A}_1+\mathbb{A}_2+\mathbb{A}_3+\mathbb{A}_4.
\end{align}
The analysis of RHS$^{(\zeta)}(\xi_u^n)$ will be divided into four parts: $\mathbb{A}_1$, $\mathbb{A}_2$, $\mathbb{A}_3$, and $\mathbb{A}_4$. Next, we will proceed with the corresponding analysis of each part.

Before the formal analysis, we focus on the $(\kappa+1)$-order leading principal submatrix $\mathbb{H}^{\zeta}_\kappa$ of $\mathbb{H}^{(\zeta)}$. Let det$\mathbb{H}^{\zeta}_\kappa$ denote the value of the principal subdeterminant.
Let $\mathcal{H}=\{\kappa: \text{det} \mathbb{H}^{\zeta}_\kappa \leq 0, \text{ and } 0 \leq \kappa \leq \zeta-1\}$ be the set. Then, the indicator factor $\rho$ is given by:
\begin{align}\label{rho_define_1}
\rho= 
\begin{cases} min\{\gamma:  \gamma\in \mathcal{H}\} & \text{if} \quad \mathcal{H} \ne \varnothing,\\ 
    \zeta & \text { otherwise. }
\end{cases} 
\end{align}
Then, we divide the set $\pi=\{0,1,\ldots,s\}$ into three parts: $\pi_1=\{0,1,\ldots,\rho-1\}$, $\pi_2=\{\rho,\rho+1,\ldots,\zeta-1\}$, and $\pi_3=\{\zeta,\zeta+1,\ldots,s\}$. It is worth noting that when $\rho$ is 0, the set $\pi$ is divided into two parts $\pi_2$ and $\pi_3$. When $\rho$ is $\zeta$, the set $\pi$ is divided into two parts $\pi_1$ and $\pi_3$.
\begin{itemize}
\item Estimate the term of $ \mathbb{A}_1$.
 $$\mathbb{A}_1=\sum_{0\le p,q\le s}c_{pq}^{(\zeta)}(\mathbb{E}_p\xi^n_u,(\mathbb{E}_q\xi^n_u)^*).$$
 From the above expression, we can see that the estimate of $\mathbb{A}_1$ is an estimate of the error in the temporal discretization, utilizing the newly defined norm \eqref{norm_new}, and given that $c^{(\zeta)} _{\zeta\zeta}\neq 0$. Then, directly applying Cauchy-Schwarz inequality and Young's inequality, we have:
 	\begin{align}\label{first_term}
		\mathbb{A}_1&\le \frac{1}{2}c_{\zeta\zeta}^{(\zeta)}\3bar\mathbb{E}_{\zeta}\xi_u^n\3bar^2+C\sum_{l\in\pi_3\backslash  \{\zeta\}} \3bar\mathbb{E}_{l}\xi_u^n\3bar^2\nonumber\\
		&\le C [c_{\zeta\zeta}^{(\zeta)}+\lambda Q(\lambda)]\3bar\mathbb{E}_{\zeta}\xi_u^n\3bar^2+ C\tau^2 h^{2k+2}\|u\|^2_{k+2,\infty} +C \tau^{2s+2}\|u\|^2_{s+2,\infty}. 
	\end{align}
    where the constant $C > 0$ is independent of $\tau$, $h$, and $n$, and $Q_1(\lambda)$ is a polynomial in $\lambda$ with non-negative coefficients.
\item Estimate the term of $ \mathbb{A}_2$.
$$\mathbb{A}_2=\sum_{0\le p,q\le s}d_{pq}^{(\zeta)}(\mathbb{E}_p\eta^n_u,(\mathbb{E}_q\xi^n_u)^*).$$
Since $d_{00}^{(\zeta)}=d_{00}^{(0)}=-\beta_0^2$,  $\| \mathbb{E}_\kappa \eta _u^{n} \| \leq C\tau h^{k+1}\|u\|_{k+2,\infty}, \kappa\in \{1,\cdots,s\}$ and Young inequality, then there exists a constant $\epsilon_3$:
\begin{align*}
       \mathbb{A}_2&\le \sum_{0< p,q\le s}d_{pq}^{(\zeta)}(\mathbb{E}_p\eta^n_u,(\mathbb{E}_q\xi^n_u)^*)
       \le \frac{\epsilon_3}{4}\sum_{0< p\le s}\3bar \mathbb{E}_p\eta^n_u\3bar^2+\epsilon^{-1}_3\sum_{0< p\le s}\left(\sum_{0< q\le s}(d_{pq}^{(\zeta)})^2\right) \3bar\mathbb{E}_q\xi^n_u\3bar^2\\
       &\le \frac{C\epsilon_3}{4}\tau^2 h^{2k+2}\|u\|_{k+2,\infty} +C\lambda Q(\lambda)  \3bar\mathbb{E}_1\xi^n_u\3bar^2, 
   \end{align*}
     where the constant $C > 0$ is independent of $\tau$, $h$, and $n$.
     
 Utilizing Lemma \ref{D_eta_estimate}, we have:
   \begin{align}\label{xi_term_estimate_2}
        \mathbb{A}_2&\lesssim \epsilon \tau^2 h^{2k+2}\|u\|_{k+2,\infty}   +\tau \lambda Q_3(\lambda) \3bar\xi^n_u\3bar^2,
        \end{align}
 where $Q_3(\lambda)$ is a polynomial in $\lambda$ with non-negative coefficients.
\item Estimate the term of $ \mathbb{A}_3$.
\[
 \mathbb{A}_3=\sum_{0\le p,q\le s}g_{pq}^{(\zeta)}(\tilde{G}^{n,p},(\mathbb{E}_q\xi^n_u)^*) .
\]
Since $g_{00}^{(\zeta)}=g_{00}^{(0)}=0$ and $\tilde{G}^{n,p}=0, p\in\{0,1,\cdots,s-1\} $, we have
 \begin{align*}
        \mathbb{A}_3&= \sum_{1\le q\le s}g_{sq}^{(\zeta)}(\tilde{G}^{n,s},(\mathbb{E}_q\xi^n_u)^*) .
    \end{align*}
 Utilizing   \eqref{time_truncation}, Cauchy-Schwarz inequality and Young's inequality, then there exists a constant $\epsilon_4$:
\begin{align}
	\mathbb{A}_3&\le \frac{\epsilon_4}{4}\3bar \tilde{G}^{n,s}\3bar^2+ \epsilon_4^{-1} \sum_{1\le q\le s}(\sum_{1\le q\le s}g_{sq}^{(\zeta)})^2\3bar\mathbb{E}_q\xi^n_u\3bar^2 \nonumber\\
	&\le C \epsilon_4\tau^{2s+2} \|u\|_{s+2,\infty}+C  \lambda Q(\lambda)\3bar\mathbb{E}_1\xi^n_u\3bar^2.\label{third_term_estimate_1}
\end{align}
\item Estimate the term of $ \mathbb{A}_4$.
$$\mathbb{A}_4=\sum_{0\le p,q\le s}h_{pq}^{(\zeta)}\tau a_h(\mathbb{E}_p \xi^n_u,(\mathbb{E}_q\xi^n_u)^*).$$
For ease of analysis, we introduce symbols:
$$ \tilde{\mathbb{A}}(\pi_a,\pi_b)=\sum_{\substack{p\in \pi_a\\q\in \pi_b}}\tau (h_{pq}^{(\zeta)}-\epsilon \delta_{pq}) a_h(\mathbb{E}_p \xi^n_u,(\mathbb{E}_q\xi^n_u)^*)+ \sum_{\substack{p\in \pi_b\\q\in \pi_a}}\tau (h_{pq}^{(\zeta)}-\epsilon \delta_{pq}) a_h(\mathbb{E}_p \xi^n_u,(\mathbb{E}_q\xi^n_u)^*),$$
where $a,b\in \{1,2,3\}$ and $\delta_{pq}$ is the standard Kronecker delta symbol, $\epsilon>0$ is the minimum eigenvalue of the symmetric positive definite matrix $\{h_{pq}^{(\zeta)}\}_{p,q\in \pi_1}$.

The fourth term contains all terms in the form of $a_h(\cdot,\cdot^*)$, which can be decomposed as:
 \begin{align}\label{second_term}
        \mathbb{A}_4= \sum_{0\le p\le s}\epsilon \tau a_h&(\mathbb{E}_p\xi^n_u,(\mathbb{E}_p\xi^n_u)^*)+ 
        \frac{1}{2}\tilde{\mathbb{A}}(\pi_1,\pi_1)+ 
        \frac{1}{2}\tilde{\mathbb{A}}(\pi_2,\pi_2)\\
        &\quad + \frac{1}{2}\tilde{\mathbb{A}}(\pi_3,\pi_3)
         + \tilde{\mathbb{A}}(\pi_1,\pi_2)+ \tilde{\mathbb{A}}(\pi_1,\pi_3)+\tilde{\mathbb{A}}(\pi_2,\pi_3).\nonumber
    \end{align}
    For the first term on the right-hand side of $\mathbb{A}_4$, by the \eqref{excharge_express}, we have:
      \[
    \sum_{0\le p\le s}\epsilon \tau a_h(\mathbb{E}_p\xi^n_u,(\mathbb{E}_p\xi^n_u)^*)=-\epsilon\lambda \sum_{0\le p\le s}\|\mathbb{E}_p\xi^n_u\|_{\Gamma_h}^2\le 0.
    \]
    
    For the second term on the right-hand side, we note that $\{h_{pq}^{(\zeta)}-\epsilon \delta_{pq} \}_{p,q\in \pi_1}$ is a symmetric positive semi-definite matrix. Then, we can utilize the property of \eqref{positive_matrix_1}, and thus have:
    \[
    \frac{1}{2}\tilde{\mathbb{A}}(\pi_1,\pi_1)=\sum_{\substack{p\in \pi_1\\q\in \pi_1}}\tau (h_{pq}^{(\zeta)}-\epsilon \delta_{pq}) a_h(\mathbb{E}_p \xi^n_u,(\mathbb{E}_q\xi^n_u)^*)\le 0;
    \]

    According to \eqref{a_DG_estimate}, we obtain:
     \begin{align*}
        \frac{1}{2}\tilde{\mathbb{A}}(\pi_2,\pi_2)
        & =\sum_{\substack{p\in \pi_2\\q\in \pi_2}}\tau (d_{pq}^{(\zeta)}-\epsilon \delta_{pq}) a_h(\mathbb{E}_p \xi^n_u,(\mathbb{E}_q\xi^n_u)^*)\\
        &\le C\lambda \sum_{\substack{p\in \pi_2\\q\in \pi_2}} \|\mathbb{E}_p\xi^n_u\| \|\mathbb{E}_q\xi^n_u\| 
        \le C\lambda  \sum_{ p\in \pi_2}\|\mathbb{E}_p \xi^n_u\|^2.
    \end{align*}
    Similarly, we have the estimate:
     \begin{align*}
        \frac{1}{2}\tilde{\mathbb{A}}(\pi_3,\pi_3)
        \le C\lambda  \sum_{ p\in \pi_3}\|\mathbb{E}_p \xi^n_u\|^2.
    \end{align*}
    
According to \eqref{excharge_express}, we can express the fifth term as:
  \begin{align*}
        \tilde{\mathbb{A}}(\pi_1,\pi_2)=	-2\tau \sum_{\substack{p\in \pi_1\\q\in \pi_2}}  (h_{pq}^{(\zeta)}-\epsilon \delta_{pq}) \|[\mathbb{E}_p \xi^n_u]\|_{\Gamma_h}\|[\mathbb{E}_q \xi^n_u]\|_{\Gamma_h}.
    \end{align*}
 Then by the Young's inequality, we have the estimate 
 \begin{align*}
	\tilde{\mathbb{A}}(\pi_1,\pi_2) &\le \frac{1}{4}\gamma \tau \sum_{ p\in \pi_1}\|\mathbb{E}_p \xi^n_u\|^2_{\Gamma_h}+C\tau  \sum_{ p\in \pi_2}\|\mathbb{E}_p \xi^n_u\|^2_{\Gamma_h}\\
	&\le \frac{1}{4}\gamma\tau \sum_{ p\in \pi_1}\|\mathbb{E}_p\xi^n_u\|^2_{\Gamma_h} +C\lambda  \sum_{ p\in \pi_2}\|\mathbb{E}_p \xi^n_u\|^2.
\end{align*}
 Similarly, we have 
 \begin{align*}
	\tilde{\mathbb{A}}(\pi_1,\pi_3)\le \frac{1}{4}\gamma\tau \sum_{p\in \pi_1}\|\mathbb{E}_p\xi^n_u\|^2 +C\lambda  \sum_{ p\in \pi_3}\|\mathbb{E}_p \xi^n_u\|^2,\\
	\tilde{\mathbb{A}}(\pi_2,\pi_3)\le \frac{1}{4}\gamma\tau \sum_{p\in \pi_2}\|\mathbb{E}_p\xi^n_u\|^2 _{\Gamma_h}+C\lambda  \sum_{p\in \pi_3}\|\mathbb{E}_p\xi^n_u\|^2.
\end{align*}

    Based on the above analysis, we obtain the estimate of $\mathbb{A}_4$:
    	\begin{align*}
		\mathbb{A}_4 \le C\lambda  \sum_{ p\in \pi_2}\|\mathbb{E}_p \xi^n_u\|^2+C\lambda  \sum_{ p\in \pi_3}\|\mathbb{E}_p \xi^n_u\|^2
	\end{align*}
 Combine the Lemma\ref{D_eta_estimate}, we have 
 	\begin{align}\label{second_term_estimate_1}
	\mathbb{A}_4  \lesssim  \lambda Q_3(\lambda)\|\mathbb{E}_\rho \xi^n_u\|^2+\lambda Q_4(\lambda)\|\mathbb{E}_\zeta \xi^n_u\|^2+\tau^4 h^{2k} \|u\|^2_{k+2,\infty}+ \tau^{2s+4}h^{-2}\|u\|^2_{s+2,\infty}.
\end{align}
\end{itemize}

Next, we will present the conclusions regarding  the error estimate of RKSV(s,k). Use the \eqref{first_term},\eqref{xi_term_estimate_2},\eqref{third_term_estimate_1} and \eqref{second_term_estimate_1}, we have
\begin{align*}
	\3bar&\xi_u^{n+1}\3bar^2-\3bar\xi_u^{n} \3bar^2\lesssim [c_{\zeta\zeta}^{(\zeta)}+\lambda Q_1(\lambda)]\3bar\mathbb{E}_{\zeta}\xi_u^n\3bar^2+\lambda Q(\lambda) \3bar\mathbb{E}_1\xi^n_u\3bar^2
	+\lambda Q_4(\lambda)\|\mathbb{E}_\zeta \xi^n_u\|^2
	\\
	&+\tau^4 h^{2k} \|u\|^2_{k+2,\infty}+ \tau^{2s+4}h^{-2}\|u\|^2_{s+2,\infty}+\frac{\gamma}{4}\tau^2 h^{2k+2}\|u\|_{k+2,\infty}+ \gamma \tau^{2s+2} \|u\|_{s+2,\infty}.
\end{align*}
Taking $\gamma=\mathcal{O}(\tau^{-1})$ and let $\lambda<1$, we can get
\begin{align*}
	\3bar\xi_u^{n+1}\3bar^2-\3bar\xi_u^{n} \3bar^2\lesssim [c_{\zeta\zeta}^{(\zeta)}+\lambda^3+\lambda^{2\zeta}+\lambda^{2\rho+1}]\3bar\xi_u^n\3bar^2
	+\tau h^{2k+2}\|u\|_{k+2,\infty}+ \tau^{2s+1} \|u\|_{s+2,\infty}.
\end{align*}
Assume $\lambda^3=\mathcal{O}(\tau)$, then we can get 
\begin{align}
		\3bar\xi_u^{n+1}\3bar^2-\3bar\xi_u^{n} \3bar^2\lesssim [c_{\zeta\zeta}^{(\zeta)}+\lambda^{2\zeta}+\lambda^{2\rho+1}]\3bar\xi_u^n\3bar^2+\tau \3bar\xi_u^n\3bar^2
	+(\tau h^{2k+2}+\tau^{2s+1})\|u\|_{k+2,\infty}.
	\end{align}
\begin{theorem}\label{estimate_main_1}
  With the termination index $\zeta$ and the contribution index $\rho$ obtained by the above matrix transferring process for the $\text{RHS}(\xi_u^n)$, we have the following error estimate of  RKSV(s,k).
    \begin{enumerate}
        \item If $a_{\zeta \zeta}^{(\zeta)}<0$ , $\rho=\zeta$ and the CFL condition satisfies $\tau=\mathcal{O}(h)$ ,
        \item If $a_{\zeta \zeta}^{(\zeta)}<0$, $\rho<\zeta$ and the CFL condition satisfies $\lambda^{(2 \rho+1)}=\mathcal{\tau}$,
        \item If $a_{\zeta \zeta}^{(\zeta)}>0$, and the CFL condition satisfies  $\lambda^{(\gamma)}=\mathcal{\tau}$, $\gamma=\min (2 \zeta, 2 \rho+1)$.
    \end{enumerate}
     then the error estimate of the RKSV(s,k) is:
\begin{align}\label{main_option}
\|u^n - u_h^n\| \lesssim (h^{k+1} + \tau^s)\|u\|_{k+2,\infty} .
\end{align}
\end{theorem}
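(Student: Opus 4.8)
The plan is to upgrade the per-step energy inequality obtained just above the theorem,
\begin{align*}
\3bar\xi_u^{n+1}\3bar^2-\3bar\xi_u^{n}\3bar^2\lesssim\bigl[c_{\zeta\zeta}^{(\zeta)}+\lambda^{2\zeta}+\lambda^{2\rho+1}\bigr]\3bar\xi_u^n\3bar^2+\tau\3bar\xi_u^n\3bar^2+\bigl(\tau h^{2k+2}+\tau^{2s+1}\bigr)\|u\|_{k+2,\infty}^2,
\end{align*}
into a Gr\"onwall-type recursion $\3bar\xi_u^{n+1}\3bar^2\le(1+C\tau)\3bar\xi_u^n\3bar^2+C(\tau h^{2k+2}+\tau^{2s+1})\|u\|_{k+2,\infty}^2$, then to iterate it, and finally to read off the bound on $e_u^n=u^n-u_h^n$ from the decomposition \eqref{error_divided_2}, the interpolation estimate \eqref{p_estimate_2} and the norm equivalence \eqref{norm_l2_new}.

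The crux is to show, in each of the three regimes, that the temporal coefficient $[c_{\zeta\zeta}^{(\zeta)}+\lambda^{2\zeta}+\lambda^{2\rho+1}]$ (or, when $c_{\zeta\zeta}^{(\zeta)}>0$, the quantity that replaces it after re-applying Lemma~\ref{D_eta_estimate} to $\3bar\mathbb{E}_\zeta\xi_u^n\3bar^2$) is either nonpositive or $\mathcal{O}(\tau)$ under the stated CFL scaling --- the same dichotomy that drives the stability theorem. If $c_{\zeta\zeta}^{(\zeta)}<0$ and $\rho=\zeta$, the condition $\tau=\mathcal{O}(h)$ keeps $\lambda\le\lambda_0$, and for $\lambda_0$ small enough $\lambda_0^{2\zeta}+\lambda_0^{2\zeta+1}<|c_{\zeta\zeta}^{(\zeta)}|$, so the bracket is $\le0$ and may be dropped. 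If $c_{\zeta\zeta}^{(\zeta)}<0$ and $\rho<\zeta$, then $2\zeta\ge2\rho+2>2\rho+1$ gives $\lambda^{2\zeta}\le\lambda^{2\rho+1}$, so under $\lambda^{2\rho+1}=\tau$ the bracket is $\le c_{\zeta\zeta}^{(\zeta)}+2\tau\le0$ for $\tau$ small. If $c_{\zeta\zeta}^{(\zeta)}>0$, I would use Lemma~\ref{D_eta_estimate} to bound $\3bar\mathbb{E}_\zeta\xi_u^n\3bar^2\le C\lambda^{2\zeta}\3bar\xi_u^n\3bar^2$ plus source terms of the admissible sizes $\tau^2h^{2k+2}$ and $\tau^{2s+2}$; then the condition $\lambda^\gamma=\tau$ with $\gamma=\min(2\zeta,2\rho+1)$ forces $\lambda^{2\zeta}+\lambda^{2\rho+1}\le2\lambda^\gamma=2\tau$. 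In every case one must also check that the leftover source terms carried by $\mathbb{A}_4$ (of orders $\tau^4h^{2k}$ and $\tau^{2s+4}h^{-2}$) and by the Young splittings that used $\gamma=\mathcal{O}(\tau^{-1})$ contribute, after summation over the $\mathcal{O}(\tau^{-1})$ time levels, no more than $h^{2k+2}+\tau^{2s}$; this follows because each of the three CFL scalings entails $\tau^{3}\lesssim h^{2}$.

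With the recursion in hand, iterating and using $(1+C\tau)^n\le e^{CT}$ for $n\tau\le T$ together with $\sum_{m=0}^{n-1}(\tau h^{2k+2}+\tau^{2s+1})\le T(h^{2k+2}+\tau^{2s})$ yields $\3bar\xi_u^n\3bar^2\le e^{CT}\bigl(\3bar\xi_u^0\3bar^2+CT(h^{2k+2}+\tau^{2s})\|u\|_{k+2,\infty}^2\bigr)$. Taking the natural initialization $u_h^0=\mathcal{P}_hu_0$ makes $\xi_u^0=0$ (more generally, any $u_h^0$ with $\|u_h^0-\mathcal{P}_hu_0\|\lesssim h^{k+1}\|u_0\|_{k+1,\infty}$ is admissible), so $\3bar\xi_u^n\3bar\lesssim(h^{k+1}+\tau^{s})\|u\|_{k+2,\infty}$, and hence $\|\xi_u^n\|\lesssim(h^{k+1}+\tau^{s})\|u\|_{k+2,\infty}$ by \eqref{norm_l2_new}. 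Since $\|\eta_u^n\|\lesssim h^{k+1}\|u\|_{k+1,\infty}$ by \eqref{p_estimate_2}, the triangle inequality applied to \eqref{error_divided_2} gives $\|u^n-u_h^n\|\le\|\xi_u^n\|+\|\eta_u^n\|\lesssim(h^{k+1}+\tau^{s})\|u\|_{k+2,\infty}$, which is \eqref{main_option}. The main obstacle is the middle step: one has to carry out, regime by regime and block by block for $\mathbb{A}_1,\dots,\mathbb{A}_4$, the simultaneous verification that the chosen CFL exponent tames every power of $\lambda$ multiplying $\3bar\xi_u^n\3bar^2$ while keeping all interpolation- and truncation-type source terms --- including the dangerous $h^{-2}$ factor in $\tau^{2s+4}h^{-2}$ --- within the per-step budget $\tau(h^{2k+2}+\tau^{2s})$.
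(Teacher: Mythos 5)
Your proposal follows essentially the same route as the paper's proof: in each of the three regimes you show the temporal coefficient multiplying $\3bar\xi_u^n\3bar^2$ is either nonpositive or $\mathcal{O}(\tau)$ under the stated CFL scaling, reduce to the recursion $\3bar\xi_u^{n+1}\3bar^2\le(1+C\tau)\3bar\xi_u^n\3bar^2+C\tau(h^{2k+2}+\tau^{2s})\|u\|^2_{k+2,\infty}$, apply Gr\"onwall, and conclude via the norm equivalence, the interpolation estimate for $\eta_u$, and the triangle inequality. The extra bookkeeping you flag (the $\tau^4h^{2k}$ and $\tau^{2s+4}h^{-2}$ source terms and the initialization $\xi_u^0=0$) is a sound refinement of details the paper leaves implicit, but it does not change the argument.
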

\begin{proof}
We will dividing it into three cases for analysis:
\begin{enumerate}
    \item  If $a_{\zeta \zeta}^{(\zeta)}<0$ and $\rho=\zeta$, then we partition the set $\pi=\{0,1,\cdots,s\}$ into two regions: $\pi_1=\{0,1,\cdots,\zeta-1\}$ and $\pi_3=\{\zeta,\zeta+1,\cdots,s\}$.
    Then, based on the analysis above, we conclude:
     	\begin{align*}
	\3bar\xi_u^{n+1}  \3bar^2-\3bar \xi_u^{n}  \3bar^2 \lesssim  (c_{\zeta\zeta}^{(\zeta)}+\lambda^{2\zeta})\3bar\xi_u^n\3bar^2
		+\tau \3bar\xi_u^n\3bar^2+\tau(h^{2k+2}+\tau^{2s})\|u\|_{k+2,\infty} 
	\end{align*}
    We have $c_{\zeta\zeta}^{(\zeta)}\leq 0$, and $\lambda$ is a sufficiently small number, then we can obtain:
    \begin{align}\label{result_1}
		\3bar\xi_u^{n+1}  \3bar^2-\3bar \xi_u^{n}  \3bar^2 \lesssim \tau \3bar\xi_u^n\3bar^2+\tau(h^{2k+2}+\tau^{2s})\|u\|_{k+2,\infty}  .
	\end{align}
    By the Gronwall inequality, the error estimate can be get :
	\begin{align}\label{result_1_finally}
		\3bar\xi_u^{n}  \3bar \lesssim (h^{k+1}+\tau^{s})\|u\|_{k+2,\infty}  .
	\end{align}
Noticing  $u-u_h=e_u=\xi_u-\eta_u$, thus we can get the  $e_u$ estimate by the triangle inequality and \eqref{result_1_finally}.
	\begin{align}\label{erroe_estimate_1}
		\|u^n-u_h^n\|\lesssim \3bar u^n-u_h^n\3bar\lesssim (h^{k+1}+\tau^{s})\|u\|_{k+2,\infty}  .
	\end{align} 
\item If $c_{\zeta \zeta}^{(\zeta)}<0$ and $\rho<\zeta$, then we partition the set $\pi=\{0,1,\cdots,s\}$ into three regions: $\pi_1=\{0,1,\cdots,\rho-1\}$, $\pi_2=\{\rho,\rho+1,\cdots,\zeta-1\}$, $\pi_3=\{\zeta,\zeta+1,\cdots,s\}$.

 We have $ c_{\zeta\zeta}^{(\zeta)}\le 0, $ and $\lambda$ is a sufficiently small number, then we can get 
 	\[
	\3bar\xi_u^{n+1}  \3bar^2-\3bar \xi_u^{n}  \3bar^2 \le( \tau+\lambda^{2\rho+1} )\3bar\xi_u^n\3bar^2+\tau(h^{2k+2}+\tau^{2s})\|u\|^2_{k+2,\infty}  . 
	\]	
    When the CFL condition $\lambda^{2\rho+1}=\mathcal{O}(\tau)$ is satisfied, we have:
     \begin{align}\label{result_2}
        \3bar\xi_u^{n+1}  \3bar^2-\3bar \xi_u^{n}  \3bar^2 \le C\tau \3bar\xi_u^n\3bar^2+C\tau(h^{2k+2}+\tau^{2s}) \|u\|^2_{k+2,\infty}  .
    \end{align}
    According to Gronwall's inequality , the error estimate is:
    \begin{align}\label{result_2_finally}
        \3bar\xi_u^{n}  \3bar \le C(h^{k+1}+\tau^{s}) \|u\|^2_{k+2,\infty}.
    \end{align}
    Note that $u-u_h=e_u=\xi_u-\eta_u$, thus we can obtain an estimate for $e_u$ through triangle inequality and \eqref{result_1_finally}.
	\begin{align}\label{erroe_estimate_2}
		\|u^n-u_h^n\|\lesssim \3bar u^n-u_h^n\3bar\lesssim (h^{k+1}+\tau^{s})\|u\|_{k+2,\infty}  .
	\end{align} 
	
    \item If  $c_{\zeta \zeta}^{(\zeta)}>0$ , then we divide three domain $\pi_1=\{0,1,\cdots,\rho-1\}$, $\pi_2=\{\rho,\rho+1,\cdots,\zeta-1\} $, $\pi_3=\{\zeta,\zeta+1,\cdots,s\} $ in the domain $\pi=\{0,1,\cdots,s\} $. 

Then, by the above analyze, we have:
	\[
\3bar\xi_u^{n+1}  \3bar^2-\3bar \xi_u^{n}  \3bar^2 \lesssim( \tau+\lambda^{2\rho+1}+\lambda^{2\zeta} )\3bar\xi_u^n\3bar^2+\tau(h^{2k+2}+\tau^{2s}) \|u\|_{k+2,\infty}  .
	\]	
We can take the CFL condition satisfy $max\{\lambda^{2\rho+1},\lambda^{2\zeta}\}=\mathcal{O}(\tau)$	, then we can get 
	\begin{align}\label{result_3}
		\3bar\xi_u^{n+1}  \3bar^2-\3bar \xi_u^{n}  \3bar^2 \lesssim \tau \3bar\xi_u^n\3bar^2+\tau(h^{2k+2}+\tau^{2s}) \|u\|_{k+2,\infty}  .
	\end{align}
By the Gronwall inequality, the error estimate can be get :
	\begin{align}\label{result_3_finally}
		\3bar\xi_u^{n}  \3bar \lesssim (h^{k+1}+\tau^{s})\|u\|_{k+2,\infty}  .
	\end{align}
Noticing  $u-u_h=e_u=\xi_u-\eta_u$, thus we can get the the  $e_u$ estimate by the triangle inequality and \eqref{result_3_finally}.
	\begin{align}\label{erroe_estimate_3}
		\|u^n-u_h^n\|\lesssim \3bar u^n-u_h^n\3bar\lesssim (h^{k+1}+\tau^{s})\|u\|_{k+2,\infty}  .
	\end{align} 
\end{enumerate}
\end{proof}

\begin{remark}\label{any_order_main_1}
If RKSV(s,k) exhibits monotonic stability and satisfies the CFL condition \(\tau = \mathcal{O}(h)\), and if RKSV(s,k) exhibits weak (\(\gamma\)) stability and the CFL condition satisfies \(\lambda^{\gamma} = \mathcal{O}(\tau)\), then the error estimate of the RKSV(s,k) is:
\begin{align*}
\|u^n - u_h^n\| \lesssim (h^{k+1} + \tau^s)\|u\|_{k,\infty}.
\end{align*}
There is a one-to-one correspondence between error estimation and the stability of numerical schemes, as evidenced by the requirement to satisfy distinct CFL conditions for each scheme in order to achieve optimal convergence order.
\end{remark}

Next, Below, we will present the key elements of stability and convergence in the RKSV(s,k) scheme. In the following table, we will list the crucial factor information for RKSV(1,k) to RKSV(12,k): the termination factor $\zeta$, the indication factor $\rho$, stability index $\gamma$, and CFL condition.
\begin{table}
\caption{Key Factors in RKSV(1,k) to RKSV(12,k)}
\label{example_1_12}
\centering
\begin{tabular}{ccccccc}
   \toprule
Schemes & $s$ &  $c_{\zeta \zeta}^{(\zeta)} $ & $\zeta $ & $\rho $ & $\gamma$ & CFL condition \\
 \midrule
RKSV$(1,k)$ & 1 &  1& 1&1 & 2&$\tau=\mathcal{O}(h^2)$ \\
RKSV$(2,k)$ & 2 & 1 &2 & 2& 4&$\tau=\mathcal{O}(h^{4/3})$ \\
RKSV$(3,k)$ & 3 & -3 &2 &2 & -&$\tau=\mathcal{O}(h)$ \\
RKSV$(4,k)$ & 4 &-8 &3 &2&5 &$\tau=\mathcal{O}(h^{5/4})$ \\
RKSV$(5,k)$ & 5 &40& 3& 3&6 &$\tau=\mathcal{O}(h^{6/5})$ \\
RKSV$(6,k)$ & 6 &180 &4 & 4&8 & $\tau=\mathcal{O}(h^{8/7})$\\
RKSV$(7,k)$ & 7 &  -1260& 4&4 &- & $\tau=\mathcal{O}(h)$\\
RKSV$(8,k)$ & 8 & -8064 &5 &4 &9 &$\tau=\mathcal{O}(h^{9/8})$ \\
RKSV$(9,k)$ & 9 & 72576 &5 &5 &10 & $\tau=\mathcal{O}(h^{10/9})$\\
RKSV$(10,k)$ & 10 &604800  & 6& 6&12 &$\tau=\mathcal{O}(h^{12/11})$ \\
RKSV$(11,k)$ & 11 &-6652800  & 6& 6&- &$\tau=\mathcal{O}(h)$ \\
RKSV$(12,k)$ & 12 &-68428800  & 7& 6&13 &$\tau=\mathcal{O}(h^{13/12})$ \\
 \bottomrule
\end{tabular}
\end{table}

The error estimate is $\mathcal{O}(\tau^s+h^{k+1})$ holds when the RKSV(s,k) satisfies the corresponding CFL condition, as shown in Table \ref{example_1_12}.

\subsection{Cases}
In this subsection, we will provide specific proofs for the error estimate of RKSV(s,k), where $s=1,2,3,4$. This involves demonstrating the evolution of numerical scheme matrices for different orders. Similar proofs for other orders can be provided in a analogous manner.	
\indent
\begin{case}
The error equation for RKSV(1,k) applied to hyperbolic equations can be expressed as:
\begin{align}	\label{Euler-SV_error_1_2}
    ( e_u^{n+1},w^*)&=( e_u^n,w^*)+\tau a_{h}(e_u^n,w^*)+(G^{n,1},w^*).
\end{align}	
\end{case}	
Then replacing  $\xi_u-\eta_u$ by $e_u$, we can rewrite \eqref{Euler-SV_error_1_2} as
\begin{align}\label{Euler_replace_equation}
(\xi^{n+1}_u,w^*)= (\xi_u^{n},w^*) +(\eta_u^{n+1}-\eta_u^n,w^*)+\tau a_h(\xi_u^n,w^*)+(G^{n,1},w^*).
\end{align}
Since $\mathbb{E}_0 e_u^n=e_u^n$  and the \eqref{D_estimate}, we have
$$(\mathbb{E}_1\xi_u^n,v^*)=\tau a_h(\mathbb{E}_0\xi_u^n, v^*)+(\mathbb{E}_1 \eta_u^n+\tilde{G}^{n,1},v^*),$$
where the $\tilde{G}^{n,1}=G^{n,1} $.
Then from  \eqref{Euler_replace_equation}, we obtain
\[
\mathbb{E}_1e_u^n= e_u^{n+1}-e_u^n.
\]
We conclude from the above formula that
$$ e_u^{n+1}=\mathbb{E}_1e_u^n+\mathbb{E}_0e_u^n,$$ 
hence that $ \boldsymbol{\beta}=(1,1)$ . 
Based on this, we can write down the initial matrix:
$$
\begin{aligned}
    &\mathbb{C}^{(0)}=\left[\begin{array}{cc}
        0 & 1 \\
        1 & 1 
    \end{array}\right], 
     &\mathbb{D}^{(0)}=\left[\begin{array}{cc}
        -1 & -1 \\
        -1 & 0
    \end{array}\right], 
    & \mathbb{G}^{(0)}=\mathbb{H}^{(0)}=\left[\begin{array}{cc}
        0 & 0\\
        0& 0 
    \end{array}\right]. 
\end{aligned}
$$
By the matrix transferring, we obtain
$$
\begin{aligned}
	&\mathbb{C}^{(1)}=\left[\begin{array}{cc}
		0 & 0 \\
		0 & 1 
	\end{array}\right], 
	&\mathbb{D}^{(1)}=\left[\begin{array}{cc}
		-1 & 0 \\
		0 & 0
	\end{array}\right], 
	&\quad \mathbb{G}^{(1)}=\left[\begin{array}{cc}
		0 & 2\\
		2& 0 
	\end{array}\right].
	& \mathbb{H}^{(1)}=\left[\begin{array}{cc}
		0 & 2\\
		2 & 0 
	\end{array}\right], 
\end{aligned}
$$
Since $c_{22}=1$ in $\mathbb{C}^{(1)}$, the evolution of the matrix stops, and finally, we obtain $\zeta=1$ and the indication factor $\rho=1$.
\begin{case}
The error equation for RKSV(2,k) applied to hyperbolic equations can be expressed as:
 \begin{align}
        ( e_u^{n,1},w^*)&=( e_u^n,w^*)+\tau a_{h}(e_u^n,w^*)
        \label{RK-SV_error_1_2},\\
        ( e_u^{n+1},w^*)
        &=\frac{1}{2}( e_u ^n+e_u^{n,1},w^*)
        +\frac{\tau}{2} a_{h}(e_u^{n,1},w^*)+(G^{n,2},w^*).\label{RK-SV_error_2_2}
    \end{align}
\end{case} 	
After replacing $\xi_u-\eta_u$ with $e_u$, we can transform \eqref{RK-SV_error_1_2}-\eqref{RK-SV_error_2_2} into:
\begin{align}
    ( \xi_u^{n,1},w^*)&=(\xi_u^n,w^*)
    +(\eta_u^{n,1}-\eta_u^n,w^*) +\tau a_{h}(\xi_u^n,w^*), \label{RK-LSV_error_1_1_2} \\
    ( \xi_u^{n+1},w^*)
    &=\frac{1}{2}( \xi_u ^n+\xi_u^{n,1},w^*)+\frac{1}{2}( 2\eta_u^{n+1}-\eta_u ^n-\eta_u^{n,1}, w^*)
    +\frac{\tau}{2} a_{h}(\xi_u^{n,1},w^*)+( G^{n,2}, w^*).\label{RK-LSV_error_1_2_2}
\end{align}
Noting that   $\mathbb{E}_0e_u^n=e_u^n$ and  the definition of $\mathbb{E}_1$ in \eqref{eta_trans_pro}, we have
$$(\mathbb{E}_1\xi_u^n,v^*)=\tau a_h(\mathbb{E}_0\xi_u^n, v^*)+(\mathbb{E}_1 \eta_u^n,v^*).$$
then by the \eqref{RK-LSV_error_1_1_2}
\[
\mathbb{E}_1e_u^n= e_u^{n,1}-e_u^n.
\]
According to  \eqref{eta_trans_pro}, $\mathbb{E}_2$ satisfies 
$$(\mathbb{E}_2\xi_u^n,v^*)=\tau a_h(\mathbb{E}_1\xi_u^n, v^*)+(\mathbb{E}_2 \eta_u^n+\tilde{G}^{n,2}, v^*),$$
where $\tilde{G}^{n,1}=2G^{n,1}$.

By \eqref{RK-LSV_error_1_1_2}-\eqref{RK-LSV_error_1_2_2} it is obvious that 
\[
\mathbb{E}_2e_u^n= 2e_u^{n+1}-2e_u^{n,1}.
\]
And through simple calculations between operators  $\mathbb{E}_0,\mathbb{E}_1, \mathbb{E}_2$, we obtain
\[
2e_u^{n+1}=2\mathbb{E}_0e_u^n+2\mathbb{E}_1e_u^n+\mathbb{E}_2e_u^n,
\]
and $ \boldsymbol{\beta}=(2,2,1)$. From this, we obtain the initial matrix:
$$
\begin{aligned}
    &\mathbb{C}^{(0)}=\left[\begin{array}{ccc}
        0 & 4&2 \\
        4 & 4&2\\
        2 & 2&1
    \end{array}\right],
     &\mathbb{D}^{(0)}=\left[\begin{array}{ccc}
        -4 & -4&-2 \\
        -4 & -4&-2\\
        -2 & -2&0
    \end{array}\right], 
    & \mathbb{G}^{(0)}=\mathbb{H}^{(0)}=\left[\begin{array}{ccc}
        0 & 0&0\\
        0 & 0&0\\
        0 & 0&0
    \end{array}\right]. 
\end{aligned}
$$
By the matrix transferring, we obtain
	\begin{align*}
	\begin{aligned}
	&\mathbb{C}^{(1)}=\left[\begin{array}{ccc}
			0 & 0&0 \\
			0 & 0&2\\
			0 & 2&1
		\end{array}\right], 
		&\mathbb{D}^{(1)}=\left[\begin{array}{ccc}
			-4 & 0&0 \\
			0 & -4&-2\\
			0 & -2&0
		\end{array}\right], \\
		&\quad \mathbb{G}^{(1)}=\left[\begin{array}{ccc}
			0 & 8&4 \\
			8 & 0&0\\
			4 & 0&0
		\end{array}\right],
		& \mathbb{H}^{(1)}=\left[\begin{array}{ccc}
			0 & 8&4 \\
			8 & 0&0\\
			4 & 0&0
		\end{array}\right]. 
	\end{aligned}
	\end{align*}
Since the value at $c_{11}^{(1)}=0$, the evolution of the matrix continues:
	\begin{align*}
	&\begin{aligned}
	&\mathbb{C}^{(2)}=\left[\begin{array}{ccc}
			0 & 0&0 \\
			0 & 0&0\\
			0 & 0&1
		\end{array}\right], 
	&\mathbb{D}^{(2)}=\left[\begin{array}{ccc}
			-4 & 0&0 \\
			0 & -4&0\\
			0 & 0&0
		\end{array}\right], 
  \end{aligned}\\
		&\quad \begin{aligned}
	\mathbb{G}^{(2)}=\left[\begin{array}{ccc}
			0 & 8&4 \\
			8 & 0&4\\
			4 & 4&0
		\end{array}\right].
		& \mathbb{H}^{(2)}=\left[\begin{array}{ccc}
				0 & 8&4 \\
			8 & 0&4\\
			4 & 4&0
		\end{array}\right], 
	\end{aligned}
	\end{align*}
Since the value  $c_{22}^{(2)}\ne0$, the evolution of the matrix stops, resulting in the termination factor $\zeta=2$ and the indication factor $\rho=2$.
\begin{case}
The error equation for RKSV(3,k) applied to hyperbolic equations can be expressed as:
 \begin{align}
        &( e_u^{n,l+1},w^*)=( e_u^{n,l},w^*)+\tau a_{h}(e_u^{n,l},w^*), \quad l =0,1,
        \label{RK3-SV_error_1}\\
        &( e_u^{n+1},w^*)
        =\frac{1}{3}( e_u ^n,w^*)+\frac{1}{2}( e_u ^{n,1},w^*)+\frac{1}{6}( e_u ^{n,2},w^*)
        +\frac{\tau}{6} a_{h}(e_u^{n,2},w^*)+(G^{n,3},w^*).\label{RK3-SV_error_2}
    \end{align}
\end{case}
Substitute  $\xi_u-\eta_u$ for $e_u$ in \eqref{RK3-SV_error_1}-\eqref{RK3-SV_error_2}, we obtain
\begin{align}
    &( \xi_u^{n,l+1},w^*)=(\xi_u^{n,l},w^*)
    +(\eta_u^{n,l+1}-\eta_u^{n,l},w^*) +\tau a_{h}(\xi_u^{n,l},w^*), \quad l =0,1, \label{RK3-LSV_error_1_1} \\
    &( \xi_u^{n+1},w^*)
    =\frac{1}{6}( 2\xi_u ^n+3\xi_u^{n,1}+\xi_u^{n,2},w^*)+\frac{1}{6}( 6\eta_u^{n+1}-2\eta_u ^n-3\eta_u^{n,1}-\eta_u^{n,2}, w^*)\label{RK3-LSV_error_1_2}
    \\
    &\quad\quad\quad\quad\quad +\frac{\tau}{6} a_{h}(\xi_u^{n,2},w^*)+( G^{n,3}, w^*).\nonumber
\end{align}	
Noting that   $\mathbb{E}_0e_u^n=e_u^n$ and  the definition of $\mathbb{E}_1$ in \eqref{eta_trans_pro}, we have
$$(\mathbb{E}_1\xi_u^n,v^*)=\tau a_h(\mathbb{E}_0\xi_u^n, v^*)+(\mathbb{E}_1 \eta_u^n,v^*).$$
then by the \eqref{RK3-LSV_error_1_1}, we can get
\[
\mathbb{E}_1e_u^n=e_u^{n,1}-e_u^{n}.
\]
By the definition of $\mathbb{E}_2$ in \eqref{eta_trans_pro}, we have
\[
(\mathbb{E}_2\xi_u^n,v^*)=\tau a_h(\mathbb{E}_1\xi_u^n, v^*)+(\mathbb{E}_2 \eta_u^n,v^*).
\]
then by the \eqref{RK3-LSV_error_1_1}, we can get
\[
\mathbb{E}_2e_u^n=e_u^{n,2}-2e_u^{n,1}+e_u^{n}.
\]	
According to  \eqref{eta_trans_pro}, $\mathbb{E}_3$ satisfies 
\[
(\mathbb{E}_3\xi_u^n,v^*)=\tau a_h(\mathbb{E}_2\xi_u^n, v^*)+(\mathbb{E}_3 \eta_u^n+\tilde{G}^{n,3},v^*),
\]
where $ \tilde{G}^{n,3}=6G^{n,3}$, we have 
\[
\mathbb{E}_3e_u^n=6e_u^{n,3}-3e_u^{n,2}-3e_u^{n},
\]
And  through simple calculations between operators $\mathbb{E}_0,\mathbb{E}_1, \mathbb{E}_2, \mathbb{E}_3$, we obtain
\[
6e_u^{n+1}=6\mathbb{E}_0e_u^n+6\mathbb{E}_1e_u^n+3\mathbb{E}_2e_u^n+\mathbb{E}_3e_u^n,
\]
and $ \boldsymbol{\beta}=(6,6,3,1)$. From this, we obtain the initial matrix:
$$
\begin{aligned}
    &\mathbb{C}^{(0)}=\left[\begin{array}{cccc}
        0 & 36&18&6 \\
        36 & 36&18&6\\
        18 & 18&9&3\\
        6 & 6&3&1
    \end{array}\right],
    &\mathbb{D}^{(0)}=\left[\begin{array}{cccc}
        -36 &- 36&-18&-6 \\
        -36 & -36&-18&-6\\
        -18 & -18&-9&-3\\
        -6 & -6&-3&0
    \end{array}\right], 
    \mathbb{G}^{(0)}= \mathbb{H}^{(0)}=\mathbb{O}_{5\times5},
\end{aligned}
$$
Therefore, according to the matrix evolution, we can obtain the matrix for $l=1$ as:
\begin{align*}
		&\begin{aligned}
			&\mathbb{C}^{(1)}=\left[\begin{array}{cccc}
				0 & 0&0&0 \\
				0 & 0&12&6\\
				0 & 12&9&3\\
				0& 6&3&1
			\end{array}\right], 
			&&&\mathbb{D}^{(1)}=\left[\begin{array}{cccc}
				-36 &0&0&0 \\
				0 & -36&-18&-6\\
				0 & -18&-9&-3\\
				0 & -6&-3&0
			\end{array}\right], 
		\end{aligned}\\
		&\begin{aligned}
			& \mathbb{G}^{(1)}=\left[\begin{array}{cccc}
				0 & 72&36&12 \\
				72& 0&0&0\\
			36 &0&0&0\\
				12 &0&0&0
			\end{array}\right],
			&&& \mathbb{H}^{(1)}=\left[\begin{array}{cccc}
				0 & 72&36&12 \\
				72& 0&0&0\\
				36 &0&0&0\\
				12 &0&0&0
			\end{array}\right]. 
		\end{aligned}	\end{align*}
Since $c_{11}^{(1)}=0$, the evolution of the matrix continues. Thus, for $l=2$, we have: 
	\begin{align*}
		&\begin{aligned}
			&\mathbb{C}^{(2)}=\left[\begin{array}{cccc}
				0 & 0&0&0 \\
				0 & 0&0&0\\
				0 & 0&-3&3\\
				0& 0&3&1
			\end{array}\right], 
			&& &\mathbb{D}^{(2)}=\left[\begin{array}{cccc}
				-36 &0&0&0 \\
				0 & -36&-6&0\\
				0 & -6&-9&-3\\
				0 & 0&-3&0
			\end{array}\right], 
		\end{aligned}\\
		&\begin{aligned}
			& \mathbb{G}^{(2)}=\left[\begin{array}{cccc}
			0 & 72&36&12 \\
			72& 0&24&12\\
			36 &24&0&0\\
			12 &12&0&0
			\end{array}\right],
			&& & \mathbb{H}^{(2)}=\left[\begin{array}{cccc}
			0 & 72&36&12 \\
			72& 0&24&12\\
			36 &24&0&0\\
			12 &12&0&0
			\end{array}\right]. 
		\end{aligned}
	\end{align*}
Since the value at $c_{22}^{(2)}$ is not equal to 0, the evolution of the matrix stops, resulting in the termination factor $\zeta=2$ and the indication factor $\rho=2$.
\begin{case}
The error equation for RKSV(4,k) applied to hyperbolic equations can be expressed as:
  \begin{align}
        ( e_u^{n,l+1},w^*)&=( e_u^{n,l},w^*)+\tau a_{h}(e_u^{n,l},w^*), \quad l =0,1,2,
        \label{RK4-SV_error_1}\\
        ( e_u^{n+1},w^*) &=\frac{3}{8}( e_u ^n,w^*)+\frac{1}{3}( e_u ^{n,1},w^*)+\frac{1}{4}( e_u ^{n,2},w^*)+\frac{1}{24}( e_u ^{n,3},w^*)\label{RK4-SV_error_2}\\
        &\quad \quad +\frac{\tau}{24} a_{h}(e_u^{n,3},w^*)+(G^{n,4},w^*).\nonumber
    \end{align}
\end{case}	
Substitute  $\xi_u-\eta_u$ for $e_u$ in \eqref{RK4-SV_error_1}-\eqref{RK4-SV_error_2}, we obtain
\begin{align}
    ( \xi_u^{n,l+1},w^*)&=(\xi_u^{n,l},w^*)
    +(\eta_u^{n,l+1}-\eta_u^{n,l},w^*) +\tau a_{h}(\xi_u^{n,l},w^*), \quad l =0,1,2, \label{RK4-LSV_error_1_1} \\
    ( \xi_u^{n+1},w^*)
    &=\frac{1}{24}( 9\xi_u ^n+8\xi_u^{n,1}+6\xi_u^{n,2}+\xi_u^{n,3},w^*)+\frac{\tau}{24} a_{h}(\xi_u^{n,3},w^*)+( G^{n,4}, w^*)\label{RK4-LSV_error_1_2}\\
    & \quad +\frac{1}{24}( 24\eta_u^{n+1}-9\eta_u ^n-8\eta_u^{n,1}-6\eta_u^{n,2}-\eta_u^{n,3}, w^*).\nonumber
\end{align}	
By the same analysis and $\mathbb{E}_0e_u^n=e_u^n$, we can get:
\begin{align*}
    \mathbb{E}_1e_u^n&=e_u^{n,1}-e_u^{n},
    \quad \mathbb{E}_2e_u^n=e_u^{n,2}-2e_u^{n,1}+e_u^{n},\\
    \mathbb{E}_3e_u^n&=e_u^{n,3}-3e_u^{n,2}+3e_u^{n,1}-e_u^{n}.
\end{align*}
According to  \eqref{eta_trans_pro}, $\mathbb{E}_4$ satisfies 
\[
(\mathbb{E}_4\xi_u^n,v^*)=\tau a_h(\mathbb{E}_3\xi_u^n, v^*)+(\mathbb{E}_4 \eta_u^n+\tilde{G}^{n,4},v^*),
\]
where $ \tilde{G}^{n,4}=24G^{n,4}$, we have 
\[
\mathbb{E}_4e_u^n=24e_u^{n+1}-4e_u^{n,3}-12e_u^{n,1}-8e_u^{n}.
\]
And  through simple calculations between operators $\mathbb{E}_0,\mathbb{E}_1, \mathbb{E}_2, \mathbb{E}_3,\mathbb{E}_4$ , we obtain
\[
24e_u^{n+1}=24\mathbb{E}_0e_u^n+24\mathbb{E}_1e_u^n+12\mathbb{E}_2e_u^n+4\mathbb{E}_3e_u^n+\mathbb{E}_4e_u^n,
\]
and $\boldsymbol{\beta}=(24,24,12,4,1)$.
From this, we obtain the initial matrix:
\begin{align*}
\begin{aligned}
    &\mathbb{C}^{(0)}=\left[\begin{array}{ccccc}
        0 & 576&288&96&24 \\
        576 & 576&288&96&24\\
        288 & 288&144&48&12\\
        96 & 96&48&16&4\\
        24 & 24&12&4&1
    \end{array}\right], 
     &\mathbb{D}^{(0)}=\left[\begin{array}{ccccc}
        -576 & -576&-288&-96&-24 \\
        -576 & -576&-288&-96&-24\\
        -288 & -288&-144&-48&-12\\
        -96 & -96&-48&-16&-4\\
        -24 &- 24&-12&-4&0
    \end{array}\right],
\end{aligned}
\end{align*}
$$\mathbb{G}^{(0)}=\mathbb{O}_{5\times 5},\quad \mathbb{H}^{(0)}=\mathbb{O}_{5\times 5}.$$
By the matrix transferring, we obtain
	\begin{align*}
		&\begin{aligned}
			&\mathbb{C}^{(1)}=\left[\begin{array}{ccccc}
				0 & 0&0&0&0 \\
				0 & 0&192&72&24\\
				0 & 192&144&48&12\\
				0 & 72&48&16&4\\
				0	& 24&12&4&1
			\end{array}\right], 
			&&   &\mathbb{D}^{(1)}=\left[\begin{array}{ccccc}
				-576 & 0&0&0&0 \\
				0& -576&-288&-96&-24\\
				0 & -288&-144&-48&-12\\
				0 & -96&-48&-16&-4\\
				0 &- 24&-12&-4&0
			\end{array}\right], 
		\end{aligned}\\
		&\begin{aligned}
			&\mathbb{G}^{(1)}=\left[\begin{array}{ccccc}
				0 & 1152&576&192&48 \\
				1152& 0&0&0&0\\
				576& 0&0&0&0\\
				192& 0&0&0&0\\
				48& 0&0&0&0
			\end{array}\right],
			&&  &\mathbb{H}^{(1)}=\left[\begin{array}{ccccc}
					0 & 1152&576&192&48 \\
				1152& 0&0&0&0\\
				576& 0&0&0&0\\
				192& 0&0&0&0\\
				48& 0&0&0&0
			\end{array}\right].
		\end{aligned}
	\end{align*}
Since $c_{11}^{(1)}=0$, the evolution of the matrix continues:
	\begin{align*}
		&\begin{aligned}
			&\mathbb{C}^{(2)}=\left[\begin{array}{ccccc}
				0 & 0&0&0&0 \\
				0 & 0&0&0&0\\
				0 & 0&0&24&12\\
				0 & 0&24&16&4\\
				0	& 0&12&4&1
			\end{array}\right], 
			&& &\mathbb{D}^{(2)}=\left[\begin{array}{ccccc}
				-576 & 0&0&0&0 \\
				0& -576&-96&-24&0\\
				0 & -96&-144&-48&-12\\
				0 & -24&-48&-16&-4\\
				0 &0&-12&-4&0
			\end{array}\right], 
		\end{aligned}\\
		&\begin{aligned}
			&\mathbb{G}^{(2)}=\left[\begin{array}{ccccc}
				0 & 1152&576&192&48 \\
			1152& 0&384&144&84\\
			576& 384&0&0&0\\
			192& 144&0&0&0\\
			48& 84&0&0&0
			\end{array}\right],
			&&  &\mathbb{H}^{(2)}=\left[\begin{array}{ccccc}
					0 & 1152&576&192&48 \\
				1152& 0&384&144&84\\
				576& 384&0&0&0\\
				192& 144&0&0&0\\
				48& 84&0&0&0
			\end{array}\right].
		\end{aligned}  
	\end{align*}
Since $c_{22}^{(2)}=0$, the evolution of the matrix continues:
	\begin{align*}
		&\begin{aligned}
			&\mathbb{C}^{(3)}=\left[\begin{array}{ccccc}
				0 & 0&0&0&0 \\
				0 & 0&0&0&0\\
				0 & 0&0&0&0\\
				0 & 0&0&-8&4\\
				0	& 0&0&4&1
			\end{array}\right], 
			&& &\mathbb{D}^{(3)}=\left[\begin{array}{ccccc}
				-576 & 0&0&0&0 \\
				0& -576&-96&-24&0\\
				0 & -96&-144&-24&0\\
				0 & -24&-24&-16&-4\\
				0 &0&0&-4&0
			\end{array}\right], 
		\end{aligned}\\
		&\begin{aligned}
			&\mathbb{G}^{(3)}=\left[\begin{array}{ccccc}
				0 & 1152&576&192&48 \\
			1152& 0&384&144&84\\
			576& 384&0&48&24\\
			192& 144&48&0&0\\
			48& 84&24&0&0
			\end{array}\right],
			&& &\mathbb{H}^{(3)}=\left[\begin{array}{ccccc}
				0 & 1152&576&192&48 \\
			1152& 0&384&144&84\\
			576& 384&0&48&24\\
			192& 144&48&0&0\\
			48& 84&24&0&0
			\end{array}\right],
		\end{aligned}
	\end{align*}	
Since $c_{33}^{(3)}$ is not equal to 0, the evolution of the matrix stops, resulting in the termination factor $\zeta=3$ and the indication factor $\rho=2$.

In the above four examples, through the evolution of matrices, we can ultimately obtain the values of $c_ {\zeta \zeta} ^ {(\zeta)} $, termination factor $\zeta$, and indication factor$\rho$. Therefore, using Theorem \ref {estimate_main_1}, we can directly obtain the optimal convergence order of the RKSV(s,k) under what CFL conditions it satisfies.

\section{Numerical results}
In this section, we numerically solve three examples, including a constant-coefficient linear hyperbolic equation, a degenerate variable-coefficient hyperbolic equation, and a two-dimensional linear hyperbolic equation. In the numerical experiments, for the time discretization, we will use RK(s) methods with $s=3,4,5,6$; for the spatial discretization, we will employ two spectral volume methods, RRSV and LSV. Additionally, for the case of degenerate variable coefficients, we will use the RSV method as a modification of RRSV, defined as follows: if $\alpha(x_{i-\frac{1}{2}}) \geq 0$ and $\alpha(x_{i+\frac{1}{2}}) \geq 0$, the control volume is constructed using the right Radau point; otherwise, the control volume is constructed using the left Radau point. If the chosen spatial order is $k$, then the fully discrete numerical schemes considered are the commonly used four types: RKSV(3,k), RKSV(4,k), RKSV(5,k), and RKSV(6,k). Different numerical examples will use different $k$ matched with corresponding time discretization.

To better present the numerical results, we define the following symbols: the number of spectral volume elements as $N$, the number of time discretization steps as $M$; $L_2$ norm error denotes the value of $\|e_h^n\|_0$, and $L_\infty$ norm error denotes the value of $\|e_h^n\|_{0, \infty}$.

\begin{example}\label{numerical_1}
We consider \eqref{linearEQ} with $\Omega=[0,2\pi]\times (0,1]$. The initial value and boundary value condition are respectively given by
$$u(x,0) = \text{sin}(x),\quad u(0,t) = u(2\pi,t). $$
The exact solution of this problem is $u(x,t) = \text{sin}(x-t).$
\end{example}

Next, we will use this example to demonstrate the conclusion from Theorem \ref{any_order_main_1}: the $L_2$ norm convergence order of  RKSV(s,k) schemes with $s=3,4$ is ${\mathcal O}(\tau^s+h^{k+1})$. 
To demonstrate this, we apply CFL conditions satisfying $\lambda=10^{-1}$  for the  RKSV(s,k) schemes with 
$s=3,4$. Consequently, the $L_2$ norm convergence order becomes ${\mathcal O}(h^{k+1})$.
 We repeat the same procedure for RKSV(s,k)  with $s=3,4$ but choose CFL conditions satisfying $\lambda=10^{-1}$. This also results in an $L_\infty$ norm convergence order of ${\mathcal O}(h^{k+1})$.

Table \ref{Table:case1_1} presents the $L_2$ norm error, $L_\infty$ norm error, and convergence order in space for the RK3-RRSV and RK3-LSV schemes. We observe that both fully discrete numerical schemes achieve the optimal convergence order of ${\mathcal O}(h^{k+1})$ for $L_2$ norm error, where $k=1,2,3$. This aligns with the theoretical proof.

\begin{table}[!t]
\caption{The $L_2$ and $L_\infty$ errors of RKSV(3,k) ($k=1,2,3$) at time $T=1$ (Example \ref{numerical_1})..}
\label{Table:case1_1}
\begin{tabular*}{\textwidth}{@{\extracolsep{\fill}}cccccccccc@{\extracolsep{\fill}}}
\hline
& & \multicolumn{2}{c}{RK3-RRSV}&   \multicolumn{2}{c}{RK3-RRSV}  &    \multicolumn{2}{c}{ RK3-LSV}   &  \multicolumn{2}{c}{RK3-LSV} \\
\cline{3-4}\cline{5-6}\cline{7-8}\cline{9-10}
$k$ & $N$ & $L_2$ & order &  $L_\infty$ & order & $L_2$ & order  &  $L_\infty$ & order \\
\hline
\multirow{5}{*}{1}
& 16    & 1.64e-02 &-   &  2.43e-02  & - & 2.28e-02 & -       &  3.14e-02  & -\\
&32  & 4.10e-03 &1.98  &  6.30e-03  &1.94 & 5.60e-03 &2.02 & 8.00e-03 &1.97  \\
&64   & 1.00e-03& 1.99  &  1.60e-03 &1.98 & 1.40e-03 & 2.00  & 2.00e-03 &1.99 \\
& 128  & 2.59e-04 & 1.99 &  4.01e-04 &1.99 & 3.46e-04 & 2.00&   5.01e-04 &1.99\\
\multirow{5}{*}{2}
&16  &5.20e-04 & -      & 9.93e-04  &-  & 8.23e-04& -       &  1.70e-03  & - \\
&32 &6.52e-05 &2.99 & 1.25e-04  &   2.98  &1.03e-04 &2.99 &  2.09e-04  &2.97 \\
&64  & 8.16e-06 & 2.99& 1.57e-05  & 2.99 & 1.28e-05 & 2.99  & 2.63e-05 &2.99\\
& 128  & 1.02e-06 & 2.99 & 1.96e-06  &2.99 & 1.61e-06 & 2.99&  3.29e-06 &2.99\\
\multirow{5}{*}{3}	
& 16   &1.25e-05 &-        & 3.14e-05& - & 2.06e-05 & -       &  5.29e-05 & -\\
&32	&	7.82e-07 &4.00	&1.99e-06  &3.97 & 1.28e-06 &4.00   &  3.32e-06  &3.99\\
& 64	&4.88e-08 & 4.00 & 1.25e-07 & 3.99 & 8.03e-08& 4.00   &  2.07e-07&3.99\\
& 128&	3.05e-09& 4.00     & 7.83e-09 & 3.99  &5.01e-09 & 4.00 & 1.29e-08 &3.99\\
\hline
\end{tabular*}
\end{table}

Table \ref{Table:case1_2} presents the $L_2$ norm error, $L_\infty$ norm error, and convergence order in space for the RK4-RRSV and RK4-LSV schemes. We observe that both fully discrete numerical schemes achieve the optimal convergence order of ${\mathcal O}(h^{k+1})$ for $L_2$ norm error, where $k=2,3,4$. This matches the theoretical proof.
\begin{table}[!t]
\caption{The $L_2$ and $L_\infty$ errors of RKSV(4,k) ($k=2,3,4$) at time $T=1$ (Example \ref{numerical_1}). .}
\label{Table:case1_2}
\begin{tabular*}{\textwidth}{@{\extracolsep{\fill}}cccccccccc@{\extracolsep{\fill}}}
\hline
    & & \multicolumn{2}{c}{RK4-RRSV}&   \multicolumn{2}{c}{RK4-RRSV}  &    \multicolumn{2}{c}{ RK4-LSV}   &  \multicolumn{2}{c}{RK4-LSV} \\
    \cline{3-4}\cline{5-6}\cline{7-8}\cline{9-10}
    $k$ & $N$ & $L_2$ & order &  $L_\infty$ & order & $L_2$ & order  &  $L_\infty$ & order \\
\hline
    \multirow{5}{*}{2}
    & 16    & 5.20e-04 &-   &  9.92e-04 & - & 8.03e-04 & -       &  1.70e-03  & -\\
    &32  & 6.52e-05 &2.99  & 1.25e-04  &2.98& 1.03e-04 &2.99 & 2.09e-04 &2.97 \\
    &64   & 8.15e-06& 2.99 &  1.56e-05 &2.99 & 1.28e-05 & 2.99  & 2.63e-05 &2.99 \\
    & 128  & 1.02e-06 & 2.99 &  1.96e-06 &2.99 & 1.61e-06 & 2.99&   3.29e-06 &2.99\\
    \multirow{5}{*}{3}
    &16  &1.21e-05 & -      & 3.01e-05  &-  & 2.03e-05& -       &  5.14e-05  & - \\
    &32 &7.56e-07 &4.00 & 1.91e-06  &   3.97  &1.26e-06 &4.00 &  3.22e-06  &3.99 \\
    &64  & 4.72e-08 & 4.00& 1.20e-07  &3.99 & 7.93e-08 & 4.00  & 2.01e-07 &3.99\\
    & 128  & 2.95e-09 & 4.00 & 7.52e-09  &3.99 & 4.95e-09 &4.00&  1.25e-08 &3.99\\
    \multirow{5}{*}{4}	
    & 16   &1.90e-07 &-        & 6.73e-07& - & 3.71e-07 & -       &  1.22e-06 & -\\
    &32	&	5.95e-09 &4.99	&2.12e-08  &4.98& 1.16e-08 &4.99  &  3.86e-08  &4.98\\
    & 64	&1.86e-10 & 4.99 & 6.64e-10& 4.99 & 3.64e-10& 4.99  &  1.21e-09&4.99\\
    & 128&	5.81e-12& 4.99   & 2.07e-11 & 4.99  &1.13e-11 & 4.99 & 3.78e-11 &4.99\\
    \hline
\end{tabular*}
\end{table}

Tables \ref{Table:case1_1} and \ref{Table:case1_2} also show the $L_\infty$ norm error and corresponding convergence order for the aforementioned SV methods (RK3-RRSV, RK3-LSV with $k=1, 2, 3$; RK4-RRSV, RK4-LSV with $k=2, 3, 4$). The $L_\infty$ error has not been theoretically investigated in this chapter. As shown in Tables \ref{Table:case1_1} and \ref{Table:case1_2}, we observe numerical convergence behavior similar to the $L_2$ norm error.

\begin{example}\label{numerical_2} We consider the equation
$$ u_t+(sin(x)u)_x=g(x,t), (x,t)\in [0,2\pi]\times (0,0.1],$$ 
with the initial value $u(x,0)=sin(x)$ and boundary value conditions $u(0,t)=u(2\pi,t)$. The problem admits the exact solution  $u(x,t)=e^{sin(x-t)}$.  Note that this example is different from Example \ref{numerical_1} in which the coefficient $\alpha(x)=sin(x)$  is a degenerate variable.
\end{example}

We numerically solve this problem over the following non-uniform meshes. 
We  divide the interval $[0,2\pi]$ into $N$ subintervals with $N=32,\cdots,256$, which are obtained by randomly and independently perturbing nodes of a uniform mesh  up to some percentages. To be more precise, we let 
$$x_i=\frac{2\pi i}{N}+\frac{1}{100N}\sin(\frac{i\pi}{N}){\rm randn}(),\quad 0\leq i\leq N,$$
where randn() returns a uniformly distributed random number in $(0,1)$.

Table \ref{Table:case2_1} describes the numerical results for the $L_2$ norm and $L_\infty$ norm measurements of the RKSV(5,k) schemes with $k=3,4,5$. It is worth noting that in both tables, we only provide errors and corresponding orders with respect to the spatial grid size $h$. To minimize errors introduced by time discretization and ensure satisfactory fulfillment of CFL conditions, we choose a CFL constant of $\lambda=10^{-3}$. 

From Table \ref{Table:case2_1}, we observe the optimal convergence order of ${\mathcal O}(h^{k+1})$ for both $L_2$ and $L_\infty$ norm errors, demonstrating our theoretical findings in Theorem \ref{any_order_main_1} and suggesting that similar theoretical analysis can be established for $L_\infty$ norm errors.
\begin{table}[!t]\label{Table:case2_1}
\caption{The $L_2$ and $L_\infty$ errors of RKSV(5,k) ($k=3,4,5$) at time $T=1$ (Example \ref{numerical_2}).}
\begin{tabular*}{\textwidth}{@{\extracolsep{\fill}}cccccccccc@{\extracolsep{\fill}}}
\toprule
& & \multicolumn{2}{c}{RK5-RSV}&   \multicolumn{2}{c}{RK5-RSV}  &    \multicolumn{2}{c}{ RK5-LSV}   &  \multicolumn{2}{c}{RK5-LSV} \\
\cline{3-4}\cline{5-6}\cline{7-8}\cline{9-10}
    $k$ & $N$ & $L_2$ & order &  $L_\infty$ & order & $L_2$ & order  &  $L_\infty$ & order \\
    \midrule
    \multirow{5}{*}{3}
    & 32   & 1.98e-01 &-    &  1.77e-01  & -    & 1.98e-01 &-    &  1.77e-01  & - \\
    &64  & 1.23e-02   &4.01 &  1.00e-02  &4.15  & 1.23e-02   &4.01 &  1.00e-02  &4.15  \\
    &128   & 7.68e-04 &4.00 &  6.20e-04  &4.00  & 7.68e-04 &4.00 &  6.20e-04  &4.00 \\
    & 256  & 4.80e-05 &4.00 &  3.87e-05  &4.00 & 4.80e-05 &4.00 &  3.87e-05  &4.00\\
    \multirow{5}{*}{4}
    &32  &3.86e-02   & -      & 3.19e-02  &-  &3.86e-02   & -      & 3.19e-02  &- \\
    &64 &1.20e-03    &5.00 & 9.74e-04  &  5.03  &1.20e-03    &5.00 & 9.74e-04  &  5.03 \\
    &128  & 3.77e-05 & 5.00& 3.04e-05  & 5.00  & 3.77e-05 & 5.00& 3.04e-05  & 5.00\\
    & 256  & 1.17e-06 & 5.00 & 9.51e-07  &5.00 & 1.17e-06 & 5.00 & 9.51e-07  &5.00\\
    \multirow{5}{*}{5}	
    & 32   &7.60e-03 &-      & 6.10e-03& - &7.60e-03 &-      & 6.10e-03& -\\
    &64	&	1.18e-04 &6.00	&9.56e-05  &6.00 &	1.18e-04 &6.00	&9.56e-05  &6.00\\
    & 128	&1.85e-06 & 6.00 & 1.49e-06 & 5.99 	&1.85e-06 & 6.00 & 1.49e-06 & 5.99\\
    & 256 &	2.89e-08& 6.00   & 2.19e-08 & 6.00 &	2.89e-08& 6.00   & 2.19e-08 & 6.00\\
    \bottomrule
\end{tabular*}
\end{table}

\begin{example}\label{numerical_3}
Consider a two-dimensional hyperbolic equation with periodic boundary conditions:
    \begin{align*}
			&u_t+u_x+u_y=0, \quad (x, y, t) \in[0, 1]\times[0, 1]\times (0, 0.1], \\
			&u(x, y, 0) = sin(x+y).
		\end{align*}
  The exact solution is \( u(x, y, t) = \sin(x+y-2t). \)
\end{example}

The current theory does not cover two-dimensional linear hyperbolic equations. However, we can evaluate the applicability of RKSV(6,k) (with $k=2,3,4$) in solving two-dimensional hyperbolic equations using numerical methods. We partition the interval $[0, 1]\times[0, 1]$ into $N\times N$ subintervals, where $N={16, 32, 64, 128}$.

Table \ref{2_Table:case_3_1} displays the $L_2$ and $L_\infty$ norm errors and their corresponding convergence orders obtained using RKSV(6,s) schemes at $T=0.1$, where $k=2,3,4$. We only present errors and corresponding orders related to the spatial grid size $h$. To control the influence of time discretization on numerical results, we choose a time step $\tau=0.002*h^{5}$. From Table \ref{2_Table:case_3_1}, it can be observed that the optimal convergence order of ${\mathcal O}(h^{k+1})$ has been achieved for solving the linear two-dimensional hyperbolic equation.

\begin{table}[!t]\label{Table:case2_1}
\caption{The $L_2$ and $L_\infty$ errors of RKSV(6,k) ($k=2,3,4$) at time $T=0.1$ (Example \ref{numerical_3}).}
\label{2_Table:case_3_1}
\centering
\begin{tabular*}{\textwidth}{@{\extracolsep{\fill}}cccccccccc@{\extracolsep{\fill}}}
\toprule
&  & \multicolumn{2}{c}{RK6-RRSV}&   \multicolumn{2}{c}{RK6-RRSV}  &    \multicolumn{2}{c}{ RK6-LSV}   &  \multicolumn{2}{c}{RK6-LSV} \\
\cline{3-4}\cline{5-6}\cline{7-8}\cline{9-10}
    $k$ & $N$ & $L_2$ & order &  $L_\infty$ & order & $L_2$ & order  &  $L_\infty$ & order \\
    \midrule
    \multirow{4}{*}{2}
    & 16    &1.80e-03   &-   & 6.40e-03   & - &1.60e-03   & -  & 3.90e-03   & -\\
    &32  & 2.21e-04  & 2.99  & 8.58e04   & 2.90&1.96e-04   &2.99 & 5.07e-04  & 2.95 \\
    &64   & 2.75e-05  & 3.00  &1.09e-04    &2.97 & 2.46e-05  & 2.99&6.36e-05   & 2.99 \\
    & 128 &3.26e-06   & 3.08  & 1.29e-05   & 3.08& 3.04e-06  & 3.01& 7.74e-06  &3.03  \\
    \multirow{4}{*}{3}	
    & 16    &9.07e-05   &-   &2.61e-04    & - &8.09e-05   & -  & 2.36e-04   & -\\
    &32	&5.69e-06   &3.99   & 167e-05   &3.96 &5.07e-06   & 3.99& 1.48e-05  & 3.99 \\
    & 64	& 3.56e-07  &3.99   &1.04e-06    &3.99 &3.16e-07   &4.00 &9.23e-07   &4.00  \\
    & 128   & 2.24e-08  & 3.98   &6.77e-08 &3.95   &1.97e-08 &  4.00 & 5.32e-08 &4.11  \\
    \multirow{4}{*}{4}
    &16    & 2.19e-06  &-   &4.74e-06    & - & 1.50e-06  & -  &  3.96e-06  & -\\
    &32 & 6.88e-08  & 4.99  &1.50e-07    &4.97 & 4.71e-08  &4.99 & 1.31e-07  & 4.92 \\
    &64  &2.15e-09   &4.99   &4.72e-09    & 4.99&1.47e-09   &5.00 &4.14e-09   & 4.98 \\
    & 128  &6.96e-11   &4.95   &1.54e-10    &4.93 &4.36e-11   & 5.07&1.21e-10   & 5.07 \\
    \bottomrule
\end{tabular*}
\end{table}

In the numerical experiments section, we focus exclusively on instances where the temporal discretization method is RK(s) with 
$s$ values of 3, 4, 5, and 6. This design choice is motivated by two considerations: firstly, when temporal accuracy is further enhanced without a corresponding increase in spatial accuracy, the overall accuracy of the fully discretized numerical scheme does not significantly improve. Secondly, from the perspective of computational efficiency, it is well-known that higher accuracy entails greater computational cost. However, this additional cost does not necessarily translate well into improved numerical accuracy. As the order of spatial discretization increases, we observe that the magnitude of the $L_2$ error reaches e-08. Such accuracy is already more than sufficient for general computational requirements.

\section{Conclusion}
In this work, we have investigated the stability and optimal order convergence of fully discrete RKSV(s,k) numerical schemes for linear hyperbolic conservation laws within a unified framework. Leveraging the evolution of matrices within this framework, we have ultimately demonstrated that when the stability of the fully discrete RKSV(s,k) scheme exhibits monotonic stability, under the CFL condition $\tau={\mathcal O}(h)$, the error achieves optimal order ${\mathcal O}(\tau^s+h^{k+1})$; and when the stability of the fully discrete RKSV(s,k) scheme manifests weak ($\gamma$) stability, under the CFL condition satisfying $\lambda^\gamma={\mathcal O}(\tau)$, the error also attains optimal order ${\mathcal O}(\tau^s+h^{k+1})$.
Simultaneously, through numerical experiments, we have validated that the SV method achieves optimal convergence rates in both $L_2$ and $L_\infty$ norms and excels in handling high-dimensional and variable-coefficient problems.

\section*{Acknowledgments}
The research was supported in part by the National Key R$\&$D Program of China (2022ZD0117805), by the National Natural Science Foundation of China under grant 12071496, and by the Guangdong Province grant  2023A1515012079.

\bibliography{mybib}

\begin{thebibliography}{10}
\providecommand{\url}[1]{#1}
\csname url@samestyle\endcsname
\providecommand{\newblock}{\relax}
\providecommand{\bibinfo}[2]{#2}
\providecommand{\BIBentrySTDinterwordspacing}{\spaceskip=0pt\relax}
\providecommand{\BIBentryALTinterwordstretchfactor}{4}
\providecommand{\BIBentryALTinterwordspacing}{\spaceskip=\fontdimen2\font plus
\BIBentryALTinterwordstretchfactor\fontdimen3\font minus
  \fontdimen4\font\relax}
\providecommand{\BIBforeignlanguage}[2]{{%
\expandafter\ifx\csname l@#1\endcsname\relax
\typeout{** WARNING: IEEEtran.bst: No hyphenation pattern has been}%
\typeout{** loaded for the language `#1'. Using the pattern for}%
\typeout{** the default language instead.}%
\else
\language=\csname l@#1\endcsname
\fi
#2}}
\providecommand{\BIBdecl}{\relax}
\BIBdecl

\bibitem{cockburn1998introduction}
B.~Cockburn, C.-W. Shu, C.~Johnson, E.~Tadmor, and B.~Cockburn, ``An
  introduction to the discontinuous galerkin method for convection-dominated
  problems,'' \emph{Advanced Numerical Approximation of Nonlinear Hyperbolic
  Equations: Lectures given at the 2nd Session of the Centro Internazionale
  Matematico Estivo (CIME) held in Cetraro, Italy, June 23--28, 1997}, pp.
  150--268, 1998.

\bibitem{gottlieb2001strong}
S.~Gottlieb, C.-W. Shu, and E.~Tadmor, ``Strong stability-preserving high-order
  time discretization methods,'' \emph{SIAM. Rev.}, vol.~43, no.~1, pp.
  89--112, 2001.

\bibitem{zhang2010positivity}
X.~Zhang and C.-W. Shu, ``On positivity-preserving high order discontinuous
  galerkin schemes for compressible euler equations on rectangular meshes,''
  \emph{Journal of Computational Physics}, vol. 229, no.~23, pp. 8918--8934,
  2010.

\bibitem{zhang_zhimin_SV_2002_1}
\BIBentryALTinterwordspacing
Z.~Wang, ``Spectral (finite) volume method for conservation laws on
  unstructured grids. basic formulation: Basic formulation,'' \emph{J. Comput.
  Phys.}, vol. 178, no.~1, pp. 210--251, 2002. [Online]. Available:
  \url{https://www.sciencedirect.com/science/article/pii/S0021999102970415}
\BIBentrySTDinterwordspacing

\bibitem{sun_wang_2004_comparison}
Y.~Sun and Z.~Wang, ``Evaluation of discontinuous galerkin and spectral volume
  methods for scalar and system conservation laws on unstructured grids,''
  \emph{Int. J. Numer. Methods. Fluids.}, vol.~45, no.~8, pp. 819--838, 2004.

\bibitem{kannan2012high}
R.~Kannan and Z.~Wang, ``A high order spectral volume solution to the burgers'
  equation using the hopf--cole transformation,'' \emph{Int. J. Numer. Methods.
  Fluids.}, vol.~69, no.~4, pp. 781--801, 2012.

\bibitem{Cozzolino_sv_2012}
L.~Cozzolino, R.~D. Morte, G.~Del~Giudice, A.~Palumbo, and D.~Pianese, ``A
  well-balanced spectral volume scheme with the wetting--drying property for
  the shallow-water equations,'' \emph{J. Hydroinformatics.}, vol.~14, no.~3,
  pp. 745--760, 2012.

\bibitem{choi2004spectral}
B.~Choi, M.~Iskandarani, J.~Levin, and D.~B. Haidvogel, ``A spectral
  finite-volume method for the shallow water equations,'' \emph{Mon. Weather.
  Rev.}, vol. 132, no.~7, pp. 1777--1791, 2004.

\bibitem{wang_SV_2004}
Z.~Wang and Y.~Liu, ``Spectral (finite) volume method for conservation laws on
  unstructured grids iii: One dimensional systems and partition optimization,''
  \emph{J. Sci. Comput.}, vol.~20, no.~1, pp. 137--157, 2004.

\bibitem{wang_SV_2004_four}
Z.~Wang, L.~Zhang, and Y.~Liu, ``Spectral (finite) volume method for
  conservation laws on unstructured grids iv: extension to two-dimensional
  systems,'' \emph{J. Comput. Phys.}, vol. 194, no.~2, pp. 716--741, 2004.

\bibitem{Abeele_sv_2009_2}
K.~Van~den Abeele, G.~Ghorbaniasl, M.~Parsani, and C.~Lacor, ``A stability
  analysis for the spectral volume method on tetrahedral grids,'' \emph{J.
  Comput. Phys.}, vol. 228, no.~2, pp. 257--265, 2009.

\bibitem{Abeele_SD_2007}
K.~Van~den Abeele, C.~Lacor, and Z.~Wang, ``On the connection between the
  spectral volume and the spectral difference method,'' \emph{J. Comput.
  Phys.}, vol. 227, no.~2, pp. 877--885, 2007.

\bibitem{liu2017high}
N.~Liu, X.~Xu, and Y.~Chen, ``High-order spectral volume scheme for
  multi-component flows using non-oscillatory kinetic flux,'' \emph{Comput.
  Fluids.}, vol. 152, pp. 120--133, 2017.

\bibitem{liu2006spectral}
Y.~Liu, V.~Marcel, and Z.~Wang, ``Spectral (finite) volume method for
  conservation laws on unstructured grids v: Extension to three-dimensional
  systems,'' \emph{J. Comput. Phys.}, vol. 212, no.~2, pp. 454--472, 2006.

\bibitem{Sun_sv_2012}
Y.~Sun, Z.~Wang, and Y.~Liu, ``High-order multidomain spectral difference
  method for the navier-stokes equations,'' in \emph{44th AIAA Aerospace
  Sciences Meeting and Exhibit}, 2006, p. 301.

\bibitem{haga2009rans}
T.~Haga, M.~Furudate, and K.~Sawada, ``Rans simulation using high-order
  spectral volume method on unstructured tetrahedral grids,'' in \emph{47th
  AIAA Aerospace Sciences Meeting including The New Horizons Forum and
  Aerospace Exposition}, 2009, p. 404.

\bibitem{sun2006spectral}
Y.~Sun, Z.~Wang, and Y.~Liu, ``Spectral (finite) volume method for conservation
  laws on unstructured grids vi: Extension to viscous flow,'' \emph{J. Comput.
  Phys.}, vol. 215, no.~1, pp. 41--58, 2006.

\bibitem{zhang2005analysis}
M.~Zhang and C.-W. Shu, ``An analysis of and a comparison between the
  discontinuous galerkin and the spectral finite volume methods,''
  \emph{Comput. Fluids.}, vol.~34, no. 4-5, pp. 581--592, 2005.

\bibitem{Abeele_sv_1D_2007}
K.~Van~den Abeele, T.~Broeckhoven, and C.~Lacor, ``Dispersion and dissipation
  properties of the 1d spectral volume method and application to a p-multigrid
  algorithm,'' \emph{J. Comput. Phys.}, vol. 224, no.~2, pp. 616--636, 2007.

\bibitem{Abeele_sv_2D_2007}
K.~Van~den Abeele and C.~Lacor, ``An accuracy and stability study of the 2d
  spectral volume method,'' \emph{J. Comput. Phys.}, vol. 226, no.~1, pp.
  1007--1026, 2007.

\bibitem{zou_cao_2021}
W.~Cao and Q.~Zou, ``Analysis of spectral volume methods for 1d linear scalar
  hyperbolic equations,'' \emph{J. Sci. Comput.}, vol.~90, no.~1, pp. 1--29,
  2022.

\bibitem{xu_cao_zou_2022}
M.~Xu, W.~Cao, and Q.~Zou, ``Two types of spectral volume methods for 1-d
  linear hyperbolic equations with degenerate variable coefficients,''
  \emph{arXiv preprint arXiv:2211.04678}, 2022.

\bibitem{Cao-Zhang-Zou_2023}
W.~Cao, Z.~Zhang, and Q.~Zou, ``Spectral volume methods for 2-d hyperbolic
  equations over rectangular meshes,'' \emph{Submitted}, 2022.

\bibitem{wei_zou_2023}
P.~Wei and Q.~Zou, ``Analysis of two fully discrete spectral volume schemes for
  hyperbolic equations,'' \emph{Numerical Methods for Partial Differential
  Equations}.

\bibitem{zhang_shu_2010_error_3order}
Q.~Zhang and C.-W. Shu, ``Stability analysis and a priori error estimates of
  the third order explicit runge--kutta discontinuous galerkin method for
  scalar conservation laws,'' \emph{SIAM Journal on Numerical Analysis},
  vol.~48, no.~3, pp. 1038--1063, 2010.

\bibitem{zhang2011third}
Q.~Zhang, ``Third order explicit runge-kutta discontinuous galerkin method for
  linear conservation law with inflow boundary condition,'' \emph{Journal of
  Scientific Computing}, vol.~46, no.~2, pp. 294--313, 2011.

\bibitem{zhang2014error_rk3}
Q.~Zhang and C.-W. Shu, ``Error estimates for the third order explicit
  runge-kutta discontinuous galerkin method for a linear hyperbolic equation in
  one-dimension with discontinuous initial data,'' \emph{Numerische
  Mathematik}, vol. 126, pp. 703--740, 2014.

\bibitem{xu2020error_rk4}
Y.~Xu, C.-W. Shu, and Q.~Zhang, ``Error estimate of the fourth-order
  runge--kutta discontinuous galerkin methods for linear hyperbolic
  equations,'' \emph{SIAM Journal on Numerical Analysis}, vol.~58, no.~5, pp.
  2885--2914, 2020.

\bibitem{xu_zhang_2019}
Y.~Xu, Q.~Zhang, C.-w. Shu, and H.~Wang, ``The $l ^{2}$ -norm stability
  analysis of runge--kutta discontinuous galerkin methods for linear hyperbolic
  equations,'' \emph{SIAM Journal on Numerical Analysis}, vol.~57, no.~4, pp.
  1574--1601, 2019.

\bibitem{zhang2004error}
Q.~Zhang and C.-W. Shu, ``Error estimates to smooth solutions of runge--kutta
  discontinuous galerkin methods for scalar conservation laws,'' \emph{SIAM. J.
  Numer. Anal}, vol.~42, no.~2, pp. 641--666, 2004.

\end{thebibliography}
\bibliographystyle{IEEEtran}






\end{document}